\newcommand{\R}{\mathbb{R}}
\newtheorem*{theorem*}{Theorem}
\newtheorem{thm}{Theorem}[section]
\newtheorem{cor}[thm]{Corollary}
\newtheorem{lem}[thm]{Lemma}
\renewcommand{\le}{\leqslant}
\renewcommand{\leq}{\leqslant}
\renewcommand{\ge}{\geqslant}
\renewcommand{\geq}{\geqslant}
\begin{document}
\title{Global gradient estimates \\ for a general type of nonlinear parabolic equations}

\author{Cecilia Cavaterra${}^{(1,4)}$\and
Serena Dipierro${}^{(2)}$
\and
Zu Gao${}^{(3)}$
\and
Enrico Valdinoci${}^{(2)}$
}

\maketitle

{\scriptsize \begin{center} (1) -- Dipartimento di Matematica ``Federigo Enriques'',
Universit\`a degli Studi di Milano\\
Via Saldini 50, I-20133 Milano (Italy)\\
\end{center}
\scriptsize \begin{center} (2) -- Department of Mathematics and Statistics,
University of Western Australia\\ 35 Stirling Highway, WA6009 Crawley (Australia)\\
\end{center}
\scriptsize \begin{center} (3) --
Department of Mathematics, School of Science,
Wuhan University of Technology\\
122 Luoshi Road, 430070 Hubei, Wuhan (China) \end{center}
\scriptsize \begin{center} (4) -- Istituto di Matematica Applicata e Tecnologie Informatiche ``Enrico Magenes'', CNR\\
   Via Ferrata 1, 27100 Pavia (Italy)\\
\end{center}

\begin{center}
{\tt cecilia.cavaterra@unimi.it},
{\tt serena.dipierro@uwa.edu.au},
{\tt gaozu7@whut.edu.cn},
{\tt enrico.valdinoci@uwa.edu.au}
\end{center}
}
\bigskip\bigskip

\par
\noindent
\centerline{\today}
\begin{abstract}\noindent
We provide global gradient estimates for solutions to a general
type of nonlinear parabolic equations, possibly in a Riemannian geometry setting.

Our result is new in comparison with
the existing ones in the literature, in light of the validity of the estimates
in the global domain, and it detects several additional regularity effects due to
special parabolic data.

Moreover, our result comprises a large number of nonlinear sources
treated by a unified approach,
and it recovers many classical results as special cases.
\\
\\
\\
\noindent \textbf{Keywords}: Parabolic equations on Riemannian manifolds, Maximum Principle, global gradient estimates.
\\
\\
\textbf{MSC 2010}: 35B09, 35B50, 35K05, 35R01.
\end{abstract}

\section{Introduction}

The goal of this paper is to consider a general
type of nonlinear parabolic equations, possibly in a Riemannian geometry setting,
and to provide new global gradient estimates.

The method that we use relies on the Maximum Principle,
as developed by Cheng and Yau
in~\cite{MR385749}
and Hamilton in~\cite{MR1230276},
and on suitable properties of the cut-off function
introduced by Li and Yau in \cite{6LY}, which are
also the key tool for the classical gradient estimates proved
by Souplet and Zhang
in~\cite{3SZ}.

Though
several gradient estimate results have been obtained in different cases
(see, e.g., \cite{MR2810695, 4CM, MR4238774,
5J, 7MZS, 8W, 9Z, 10Z, 11X, 13HM, 14DK, 14DKN, 15W, 16MZ, 17CZ}),
we provide here a general framework dealing,
at once, with various nonlinearities of interest (as a matter
of fact, a number of classical and recent
results can be re-obtained as special cases of our general
approach). Also, we will provide ``global'' (rather than ``local'')
estimates that take into account the parabolic boundary
behavior, thus improving the estimates when the data of the equation
are particularly favorable.\medskip

We point out that the pointwise gradient estimates
for parabolic equations have also a natural counterpart
for elliptic equations, see e.g.~\cite{2019arXiv190304569C,
MR454338, MR615561, MR803255, MR1296785, MR2680184, MR2812957, MR3231999},
and, in general, pointwise gradient
estimates based on Maximum Principles
are a classical, yet still very active, topic of investigation.\medskip

Now we introduce the mathematical framework in details.
Let~$\mathscr{M}$ be a Riemannian manifold of dimension $n\geq2$,
with Ricci curvature denoted by~$\mathrm{Ric}(\mathscr{M})$.
In this article, we will always suppose that the Ricci curvature of~$\mathscr{M}$
is bounded from below, namely,
\begin{equation}\label{ricci k}
\mathrm{Ric}(\mathscr{M})\geq-k,\end{equation}
for some $k\in\mathbb{R}$.

As customary, we also use the ``positive part'' notation
$$ k_{+}:=\max\{k,\,0\}.$$
The geodesic ball centered at~$x_0\in\mathscr{M}$
of radius~$R>0$ will be denoted by~$B(x_0,R)$.

Given~$x_0\in \mathscr{M}$,
$R>0$, $t_0\in\R$, and~$T>0$,
we consider a classical parabolic equation of the form
\begin{equation}\label{20}
u_t=\Delta u+S(x,t,u)\qquad{\mbox{ in }}Q_{R,T}.
\end{equation}
In this setting~$u=u(x,t)$, with~$x\in B(x_0,R)\subset\mathscr{M}$
and~$t\in[t_0-T,t_0]$,
and we have
used the classical notation
$$Q_{R,T}:=B(x_0,R)\times[t_0-T,t_0].$$
We will always suppose that
\begin{equation}\label{BOUND} u(x,t)\in(0,M]\qquad{\mbox{ for all }}(x,t)\in Q_{R,T},\end{equation}
for some~$M>0$.

Also, in~\eqref{20}
we denote by~$S$
a nonlinear source for the equation, that we
suppose to be~$ C^1$ in~$x$ and~$u$, and
continuous in~$t$.

In this setting, we define
\begin{equation}\label{GAMMA}
\gamma:=\sup_{(x,t,u)\in Q_{R,T}\times(0,M] }
\frac{\big|\nabla S(x,t,u)\big|}{u},
\end{equation}
where $\nabla$ stands for the
gradient with respect the components of the space variable~$x$.

Following~\cite{3SZ}, it is also convenient to consider the auxiliary function
\begin{equation}\label{def v}v(x,t):=\ln \frac{u(x,t)}{M}.\end{equation}
Furthermore, given~$k$ as in~\eqref{ricci k}, we set
\begin{equation}\label{LA:23} \mu:=\sup_{ (x,t)\in Q_{R,T}}
\left(k+
\partial_u S(x,t,u)-\frac{S(x,t,u)}{u}+\frac{S(x,t,u)}{u(1-v)}
\right)_+.\end{equation}
Our main goal is to establish global gradient bounds for
solutions of~\eqref{20}. Since these bounds may degenerate near
the parabolic boundary (e.g., if the initial or boundary data are
not regular enough), we exploit suitable cut-off functions.
Specifically, given~$\delta\in(0,T)$ and $\rho\in(0,R)$,
we consider the functions
\begin{equation}\label{161}\begin{split}&
{\mathscr{B}}_1(x,t):=\chi_{B(x_0,R-\rho)}(x)\chi_{[t_0-T, t_0-T+\delta)}(t),\\&
{\mathscr{B}}_2(x,t):=\chi_{B(x_0,R)\backslash B(x_0,R-\rho)}(x)\chi_{[t_0-T+\delta,t_0]}(t),\\&
{\mathscr{B}}_3(x,t):=\chi_{B(x_0,R)\backslash B(x_0,R-\rho)}(x)\chi_{[t_0-T,t_0-T+\delta)}(t)\\{\mbox{and }}\qquad&
{\mathscr{I}}(x,t):=\chi_{B(x_0,R-\rho)}(x)\chi_{[t_0-T+\delta,t_0]}(t).
\end{split}\end{equation}
We point out that the functions~${\mathscr{B}}_1$, ${\mathscr{B}}_2$
and~${\mathscr{B}}_3$ are localized in a neighborhood of the parabolic
boundary (namely, ${\mathscr{B}}_1$ near the time-boundary
but in the interior of the space-boundary,
${\mathscr{B}}_2$ near the space-boundary
but in the interior of the time-boundary,
and~${\mathscr{B}}_3$ near the space- and time-boundary).
Conversely, the function~${\mathscr{I}}$ is supported well-inside the
domain~$Q_{R,T}$, and
$$ {\mathscr{B}}_1+{\mathscr{B}}_2+{\mathscr{B}}_3+{\mathscr{I}}=
\chi_{Q_{R,T}},$$
hence the supports of these auxiliary functions can be seen as
a partition of the domain under consideration.

Furthermore, recalling the notation in~\eqref{GAMMA}
 and~\eqref{LA:23},
we define
\begin{equation}\label{TUTTITE-0}
\begin{split}
&{\mathscr{C}}:=
\gamma^{1/3}+\sqrt\mu,\\
&{\mathscr{T}}:=\frac{1}{\sqrt\delta},\\
{\mbox{and }}\quad&{\mathscr{S}}:=
\frac{1}{\rho}+\frac{1}{\sqrt{\rho(R-\rho)}}+
\frac{\sqrt[4]{k_+}}{\sqrt{\rho}}.
\end{split}
\end{equation}
We notice that~${\mathscr{C}}$, ${\mathscr{T}}$
and~${\mathscr{S}}$ are constants, depending on the nonlinearity,
the geometry of the manifold and the parameters~$R$, $\rho$ and~$\delta$.

Moreover, we set
\begin{equation}\label{sigtau}\begin{split}&
\tau_u:=\sup_{x\in B(x_0,R)}\frac{|\nabla u|}{u(1-v)}(x,t_0-T),
\\{\mbox{and }}\qquad&
\sigma_u:=\sup_{{x\in\partial B(x_0,R)}\atop{t\in[t_0-T,t_0]}}\frac{|\nabla u|}{u(1-v)}(x,t).
\end{split}\end{equation}
We remark that~$\tau_u$ and~$\sigma_u$ are known-objects,
once we know the parabolic boundary data of~$u$
(in particular, both~$\tau_u$ and~$\sigma_u$
are controlled by the supremum of~$\frac{|\nabla u|}{u(1-v)}$
over the parabolic boundary of~$Q_{R,T}$).

The terms in~\eqref{TUTTITE-0}
are the building blocks of our chief estimate, since they comprise
the different pointwise behavior of the solution, in different regions
of the space-time domain. More specifically:
\begin{itemize}
\item the term ${\mathscr{C}}$ is a ``common'' term in all the domain,
produced by the nonlinearity~$S$ and by the curvature of the ambient manifold,
\item the term ${\mathscr{T}}$ is a localization term due to a cut-off
function in the time variable,
\item the term ${\mathscr{S}}$ is a localization term due to a cut-off
function in the space variable.
\end{itemize}
Our strategy would then be to choose,
in our chief estimate, the ``best option''
between the boundary datum and the universal smoothing effect
produced by the heat equation. To this end, given a constant~$C>0$ (that will be taken conveniently large in the following Theorem~\ref{TH2})
we define
\begin{equation}\label{162}\begin{split}
\beta_1\,&:=
\tau_u +\min\left\{\sigma_u,\,C{{\mathscr{S}}}\right\},\\
\beta_2\,&:=
\sigma_u +\min\left\{\tau_u,\,C{{\mathscr{T}}}\right\},\\
\beta_3\,&:=\sigma_u +\tau_u,\\ {\mbox{and }}\qquad
\iota\,&:=
\min\left\{\sigma_u+\tau_u,\,
\sigma_u+C{{\mathscr{T}} },\,
\tau_u+C{{\mathscr{S}}} ,\,
C({{\mathscr{T}}}+{{\mathscr{S}}})\right\}
.\end{split}\end{equation}
We point out that the quantities in~\eqref{162}
are constants.
In this framework, the term~$\beta_1$
will take care of the region of the domain
in the interior of the space variable and near the parabolic boundary
in the time variable: such a term takes into account the initial datum
in time and, thanks to the spatially interior smoothing effect,
takes the ``best possible choice'' between the spatial boundary datum
and the estimate produced by a spatial cut-off.

Similarly,
the term~$\beta_2$ in~\eqref{162}
will take care of the region of the domain
in the interior of the time variable and near the spatial boundary: such a term takes into account the boundary datum
in space and, thanks to the smoothing effect for positive times,
takes the ``best possible choice'' between the initial datum
and the estimate produced by a cut-off in the time variable.

The term~$\beta_3$ in~\eqref{162} deals with the case
of proximity to the boundary for both the space and time variables,
and clearly reflects the influence of the data on the whole of the
parabolic boundary.

Finally, the term~$\iota$ in~\eqref{162} considers the case
of interior points, both in space and time: in this case,
one can take the ``best possible choice'' between the data along the
parabolic boundary and the universal smoothing effect
of the heat equation in the interior of the domain.

In view of these considerations,
the coupling between the auxiliary functions and the coefficients is
encoded by the function
\begin{equation}\label{DEFdiZ}
\mathscr{Z}:=
\beta_1\,{\mathscr{B}}_1
+\beta_2\,{\mathscr{B}}_2
+\beta_3\,{\mathscr{B}}_3
+\iota\,{\mathscr{I}}.
\end{equation}
With this notation, the main estimate of this paper goes as follows:

\begin{thm}\label{TH2}
Suppose that $u$ is a solution of equation~\eqref{20}
satisfying~\eqref{BOUND}.

Then,
there exists~$C>0$, only depending on~$n$, such that,
for any~$\delta\in(0,T)$ and $\rho\in(0,R)$, we have
that
\begin{equation}\label{B}
\aligned
&~~~~\frac{|\nabla u(x,t)|}{u(x,t)} \leq \Big(C
\mathscr{C}+
\mathscr{Z}(x,t)\Big)\,
\left(1+\ln \frac{M}{u(x,t)}\right)\quad{\mbox{ for all }}
(x,t)\in Q_{R,T}.
\endaligned
\end{equation}
\end{thm}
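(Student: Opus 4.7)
The plan is to adapt the Cheng--Yau/Hamilton/Souplet--Zhang pointwise maximum-principle strategy, with the novelty that the maximum-principle argument is run four times with different localisations to produce four a priori bounds, whose pointwise minima on the partition $\{\mathscr{B}_i=1\}\cup\{\mathscr{I}=1\}$ assemble into the function~$\mathscr{Z}$. Following~\cite{3SZ}, I would first pass to $v=\ln(u/M)\leq 0$ (as already set up in~\eqref{def v}) and study
$$ F:=\frac{|\nabla v|^2}{(1-v)^2}=\left(\frac{|\nabla u|}{u(1-v)}\right)^2, $$
so that~\eqref{B} becomes a pointwise bound on~$\sqrt{F}$. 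Starting from the PDE $v_t-\Delta v=|\nabla v|^2+S/u$ and applying the Bochner formula together with~\eqref{ricci k}, one derives a pointwise inequality schematically of the form
\begin{equation*}
(\Delta-\partial_t)F+\tfrac{2v}{1-v}\,\langle\nabla v,\nabla F\rangle\;\geq\;\tfrac{c_n\,|\nabla v|^4}{(1-v)^3}\;-\;C\mu\,F\;-\;C\gamma\,|\nabla v|,
\end{equation*}
in which the precise algebraic combination selected in~\eqref{LA:23} is exactly what is required to cancel the cross terms coming from $\langle\nabla v,\nabla(S/u)\rangle$ and $(S/u)\Delta v$ after the substitution, leaving $\mu$ and $\gamma$ as the only surviving nonlinear constants.

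Next I would import the classical Li--Yau spatial cut-off~$\phi(x)$, radial and supported in~$B(x_0,R)$ with $\phi=1$ on $B(x_0,R-\rho)$, $|\nabla\phi|^2\leq C\phi/\rho^2$, and (by the Laplacian comparison theorem under~\eqref{ricci k}) $-\Delta\phi\leq C\bigl(1/(\rho(R-\rho))+\sqrt{k_+}/\rho\bigr)$, together with a time cut-off $\eta(t)$ satisfying $\eta(t_0-T)=0$, $\eta\equiv 1$ on $[t_0-T+\delta,t_0]$, and $|\eta'|\leq C/\delta$. The sizes of these derivatives, balanced against the coercive term $|\nabla v|^4/(1-v)^3$, produce exactly the scales $\mathscr{S}$ and $\mathscr{T}$ appearing in~\eqref{TUTTITE-0}. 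I would then run the maximum principle on each of the four products $\phi\eta F$, $\phi F$, $\eta F$, and~$F$ separately. At an interior maximum, $\nabla(\cdot)=0$ and $(\Delta-\partial_t)(\cdot)\leq 0$ turn the Bochner inequality into a polynomial in $F^{1/2}$, which is solved by Young's inequality with exponents tuned to absorb $\gamma|\nabla v|$ and $\mu F$ into the coercive term; this produces precisely the powers $\gamma^{1/3}$ and $\sqrt{\mu}$ in~$\mathscr{C}$. At a maximum on the parabolic boundary, the product reduces to~$F$ itself, bounded by~$\sigma_u^2$, $\tau_u^2$, or zero depending on which cut-off is absent. The four runs thereby yield
$$ F\leq C(\mathscr{C}^2+\mathscr{T}^2+\mathscr{S}^2),\;\; F\leq C(\mathscr{C}^2+\tau_u^2+\mathscr{S}^2),\;\; F\leq C(\mathscr{C}^2+\mathscr{T}^2+\sigma_u^2),\;\; F\leq C(\mathscr{C}^2+\tau_u^2+\sigma_u^2), $$
each valid on the subregion where the corresponding cut-offs are supported.

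On~$\mathscr{I}$ all four bounds apply, and their minimum recovers $C\mathscr{C}+\iota$; on $\mathscr{B}_1$ (interior in space, near the initial time) only the two bounds with no time cut-off are available, and their minimum is $C\mathscr{C}+\beta_1$; symmetrically $\mathscr{B}_2$ gives~$\beta_2$ and $\mathscr{B}_3$ gives~$\beta_3$. This produces the piecewise estimate $\sqrt{F}\leq C\mathscr{C}+\mathscr{Z}$ pointwise on $Q_{R,T}$, which multiplied by $1-v=1+\ln(M/u)$ is exactly~\eqref{B}. The main obstacle is the Bochner computation: making the contributions from $\nabla(S/u)$, $\partial_u(S/u)$, and $(S/u)\Delta v$ collapse neatly into the two constants $\gamma$ and $\mu$ is a delicate algebraic manipulation that dictates the exact form of~\eqref{LA:23}, and the Young's inequality balance used to absorb the bad terms into $|\nabla v|^4/(1-v)^3$ must be carried out identically across all four variants so that a single universal constant $C=C(n)$ controls all of them simultaneously.
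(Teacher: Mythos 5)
Your proposal follows essentially the same route as the paper: passing to $v=\ln(u/M)$ and $w=|\nabla v|^2/(1-v)^2$, deriving a Bochner-type differential inequality with coefficients $\gamma$ and $\mu$ (the paper's Lemma~\ref{PI}), running the maximum principle with the four cut-off configurations (space-only, time-only, both, none) to obtain the four regional bounds (the paper's Lemmata~\ref{CB:1}--\ref{CB:4}), and then taking pointwise minima over the partition to assemble $\mathscr{Z}$. The only cosmetic deviations are that the paper's case analysis yields $\tau_u^2$ and $\sigma_u^2$ without an extra multiplicative constant (since these arise as raw boundary values of $w$, not through absorption), and the terms $\gamma$, $\mu$ emerge by direct bounding of $\nabla(S/u)$ rather than by a cancellation, but neither affects the substance of the argument.
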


We observe that the estimate in~\eqref{B} is dimensionally
coherent: indeed, taking the time variable
to have the same measure units
of the square of the space variable, and~$S$ to have the same units of~$u$
over the square of the space variables, we obtain that
all the terms in~\eqref{B} have the units of the inverse of the space
(for this, we recall that the Ricci curvature scales like the inverse
of the square of the space variable, see for instance
Example 8.5.5 on page~418 and Theorem 8.5.22 on page~427 in~\cite{MR2327126}).\medskip

We also point out that gradient estimates
as the one in~\eqref{B} can be also considered as extension
of classical gradient estimates inspired by the Bernstein technique,
see~\cite{MR0114050, MR1351007, MR3450752, MR0402274, MR3254790, MR3450752, MR4021092, SIRALORO, BERNOI}.

With respect to this feature, we stress that the quantities~$\tau_u$ and~$\sigma_u$ introduced
in~\eqref{sigtau} are not just natural and convenient objects motivated by the methods
relying on the Maximum Principle, but they play an interesting role
in {\em improving the known estimates for cases
in which the parabolic data are particularly nice}. More specifically,
the classical estimates have the striking feature of being ``universal'',
and thus independent on the data: this property is certainly advantageous
when dealing with poor boundary data, since these classical results still ensure suitable gradient bounds in the interior,
but they become somehow ``suboptimal'' when the boundary data are extremely good (these are precisely the pros and cons
of an estimate holding true regardless of the specific boundary conditions!).
Instead, the estimates that we provide in this paper are flexible enough, on the one hand,
to recover several classical results of universal type and, on the other hand, to provide enhanced
estimates when the data are extraordinarily nice (this general concept will be quantified,
for instance, at the end of Section~\ref{ECBD-90SINSCIUSTU}).\medskip

The rest of this paper is organized as follows.
We provide the proof of
Theorem~\ref{TH2} in Section~\ref{KAseecdt:2}.
Then, in Section~\ref{KAseecdt:3}, we give some
specific applications of our general result,
also showing how it comprises and improves
some classical and recent results from the literature.

\section{Proof of Theorem~\ref{TH2}}\label{KAseecdt:2}
\setcounter{equation}{0}
\vskip2mm
\noindent

The proof of Theorem~\ref{TH2} relies on a general estimate,
given in the forthcoming Lemma~\ref{PI}.
With that, one will obtain the desired claim in Theorem~\ref{TH2}
by considering different regions, according to the cut-off functions,
and exploiting the Maximum Principle in the interior.

To this aim, we set
\begin{equation}\label{DE w}
w:=\frac{|\nabla v|^2}{(1-v)^{2}},\end{equation}
and we have:

\begin{lem}\label{PI} Let~$u$ be as in Theorem~\ref{TH2},
$v$ be as in~\eqref{def v} and~$w$ be as in~\eqref{DE w}.

Then, in~$Q_{R,T}$, it holds
\begin{equation*}
\frac{
\Delta w-w_t}2\,\ge\,
(1-v)\,w^2+
\frac{v\,\langle\nabla w,\nabla v\rangle}{1-v}-\frac{\gamma\,|\nabla v|}{(1-v)^2}
-\mu w,
\end{equation*}
where~$\gamma$ and~$\mu$ are as in~\eqref{GAMMA}
and~\eqref{LA:23}.
\end{lem}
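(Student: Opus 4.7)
The plan is to compute $\Delta w - w_t$ by brute force in terms of $v$ and its derivatives, then to peel off positive contributions until only the four quantities in the right-hand side survive. First I would rewrite the PDE in terms of $v = \ln(u/M)$: since $\nabla v = \nabla u/u$ and $\Delta v = \Delta u/u - |\nabla v|^2$, the equation \eqref{20} becomes
\begin{equation*}
v_t - \Delta v \;=\; |\nabla v|^2 + \frac{S(x,t,u)}{u}.
\end{equation*}
Setting $\phi := 1-v \ge 1$ and differentiating $w = |\nabla v|^2 \phi^{-2}$ in space and time, I obtain
\begin{equation*}
\Delta w - w_t \;=\; \phi^{-2}\bigl(\Delta |\nabla v|^2 - (|\nabla v|^2)_t\bigr)
+ 4\phi^{-3}\langle \nabla v, \nabla |\nabla v|^2\rangle
+ 2\phi^{-3}|\nabla v|^2 (\Delta v - v_t) + 6\phi^{-4}|\nabla v|^4,
\end{equation*}
into which I substitute $\Delta v - v_t = -|\nabla v|^2 - S/u$ from the PDE.

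Next I would invoke Bochner's formula
\begin{equation*}
\tfrac12\Delta |\nabla v|^2 \;=\; |\nabla^2 v|^2 + \langle \nabla \Delta v, \nabla v\rangle + \mathrm{Ric}(\nabla v,\nabla v),
\end{equation*}
so that $\Delta|\nabla v|^2 - (|\nabla v|^2)_t = 2|\nabla^2 v|^2 - 4v_{ij}v_iv_j - 2\langle \nabla(S/u),\nabla v\rangle + 2\mathrm{Ric}(\nabla v,\nabla v)$ after using $\nabla(v_t-\Delta v) = \nabla|\nabla v|^2 + \nabla(S/u)$. The Ricci assumption \eqref{ricci k} turns the last term into $-2kw$ once multiplied by $\phi^{-2}$.

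The key algebraic identity to keep in mind is
\begin{equation*}
\langle \nabla w, \nabla v\rangle \;=\; 2\phi^{-2} v_{ij}v_iv_j + 2\phi^{-3}|\nabla v|^4,
\end{equation*}
which I will use to convert the $v_{ij}v_iv_j$ contributions into a $\langle\nabla w,\nabla v\rangle$ term plus an explicit $|\nabla v|^4$ correction. A careful bookkeeping of the $|\nabla v|^4$ terms collapses them to $-2v\,w^2$. To recover the missing $2w^2$ (so that the coefficient becomes $2(1-v)$ as claimed) I would exploit the refined pointwise inequality
\begin{equation*}
|\nabla^2 v|^2 \;\ge\; \frac{(v_{ij}v_iv_j)^2}{|\nabla v|^4},
\end{equation*}
which follows from Cauchy--Schwarz applied to the Hessian in the direction $\nabla v/|\nabla v|$. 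Substituting the identity for $v_{ij}v_iv_j$ inside this square produces exactly $2w^2$ together with a cross term $-2\phi^{-1}\langle \nabla w,\nabla v\rangle$; the latter cancels the ``wrong'' part of the gradient coefficient and leaves the clean factor $2v/(1-v)$.

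The final step is to handle the source. I would expand $\nabla(S/u) = \nabla_x S/u + (\partial_u S - S/u)\nabla v$ via the chain rule (remembering that $\nabla$ in \eqref{GAMMA} means partial gradient in $x$ with $u$ fixed), bound $|\nabla_x S|/u \le \gamma$ to produce the term $-2\gamma|\nabla v|/(1-v)^2$, and collect the remaining linear-in-$w$ terms into
\begin{equation*}
-2w\left(k + \partial_u S - \frac{S}{u} + \frac{S}{u(1-v)}\right) \;\ge\; -2\mu\,w,
\end{equation*}
using $w\ge 0$ and the definition \eqref{LA:23}. Dividing by two gives the stated inequality. The main obstacle I anticipate is the bookkeeping in the previous paragraph: naively discarding $|\nabla^2 v|^2$ only yields a coefficient $-2v$ (instead of $2(1-v)$) in front of $w^2$, and an excess $\langle \nabla w,\nabla v\rangle$ term with the wrong coefficient, so the refined Hessian inequality has to be applied in precisely the right form for the two corrections to cancel simultaneously.
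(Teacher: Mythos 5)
Your proposal is correct and essentially reproduces the paper's proof: the ``refined Hessian inequality'' $|\nabla^2 v|^2\ge (v_{ij}v_iv_j)^2/|\nabla v|^4$, followed by the substitution $v_{ij}v_iv_j=\tfrac12(1-v)^2\langle\nabla w,\nabla v\rangle-(1-v)^{-1}|\nabla v|^4$ and discarding the remaining nonnegative square, is exactly the paper's perfect-square step $0\le\big(\frac{\langle\nabla|\nabla v|^2,\nabla v\rangle}{2|\nabla v|^2}+\frac{|\nabla v|^2}{1-v}\big)^2\le |D^2v|^2+\frac{\langle\nabla|\nabla v|^2,\nabla v\rangle}{1-v}+\frac{|\nabla v|^4}{(1-v)^2}$, just written in the variable $\langle\nabla w,\nabla v\rangle$ instead of $\langle\nabla|\nabla v|^2,\nabla v\rangle$. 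The remainder of the argument (Bochner plus the Ricci bound, the bookkeeping collapsing the $|\nabla v|^4$ terms to $2(1-v)w^2$, the chain-rule expansion of $\nabla(S/u)$, and the absorption into $\gamma$ and $\mu$) coincides with the paper's computation.
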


\begin{proof}
Recalling~\eqref{BOUND} and~\eqref{def v}, we see that
\begin{equation}\label{Jna:0}v\leq0,\end{equation} and
\begin{equation}\label{Jna:1}
v_t=\frac{u_t}{u},\qquad
\nabla v=\frac{\nabla u}{u}\qquad{\mbox{and}}\qquad
\Delta v=\frac{u\Delta u-|\nabla u|^2}{u^2}.
\end{equation}
As a consequence, from~\eqref{20}, we have

\begin{equation}\label{2e2}\begin{split}
v_t\,&=\frac{\Delta u+S(x,t,u)}{u}\\&=\frac{u\Delta u-|\nabla u|^2}{u^2}+
\frac{|\nabla u|^2}{u^2}
+\frac{S(x,t,u)}{u}
\\&=
\Delta v+|\nabla v|^2+\frac{S(x,t,u)}{u}.
\end{split}\end{equation}

Now we observe that
\begin{equation}\label{nu:1}
\nabla w
=\frac{\nabla|\nabla v|^2}{(1-v)^{2}}+2\frac{|\nabla v|^2\nabla v}{(1-v)^{3}}.
\end{equation}
Moreover, we have that
\begin{equation}\label{nu:2}
{\rm div}\,\left(\frac{\nabla|\nabla v|^2}{(1-v)^{2}}\right)=
\frac{\Delta|\nabla v|^2}{(1-v)^{2}}+
\frac{2\langle\nabla|\nabla v|^2,\nabla v\rangle}{(1-v)^{3}}.
\end{equation}
In addition,
\begin{equation*}
{\rm div}\,\left(\frac{|\nabla v|^2\nabla v}{(1-v)^{3}}\right)=
\frac{\langle\nabla |\nabla v|^2,\nabla v\rangle}{(1-v)^{3}}
+
\frac{|\nabla v|^2\Delta v}{(1-v)^{3}}+
\frac{3|\nabla v|^4}{(1-v)^{4}}.
\end{equation*}
{F}rom this, \eqref{nu:1} and~\eqref{nu:2}, we deduce that
\begin{equation}\label{nu:3}
\begin{split}&
\Delta w\\=\;&
\frac{\Delta|\nabla v|^2}{(1-v)^{2}}+
\frac{2\langle\nabla|\nabla v|^2,\nabla v\rangle}{(1-v)^{3}}
+
\frac{2\langle\nabla |\nabla v|^2,\nabla v\rangle}{(1-v)^{3}}
+
\frac{2|\nabla v|^2\Delta v}{(1-v)^{3}}+
\frac{6|\nabla v|^4}{(1-v)^{4}}
\\=\;&\frac{\Delta|\nabla v|^2}{(1-v)^2}+\frac{4\langle\nabla|\nabla v|^2,\nabla v\rangle}{(1-v)^3}
+\frac{2|\nabla v|^2\Delta v}{(1-v)^3}+\frac{6|\nabla v|^4}{(1-v)^4}.
\end{split}\end{equation}
Moreover, using~\eqref{2e2}, we find that
\begin{equation}\nonumber
\aligned
w_t&=\frac{2\langle\nabla v,\nabla v_t\rangle}{(1-v)^2}+\frac{2|\nabla v|^2v_t}{(1-v)^3}
\\&=\frac{2\langle\nabla v,\nabla\Delta v\rangle}{(1-v)^2}
+\frac{2\langle\nabla v,\nabla|\nabla v|^2\rangle}{(1-v)^2}
+\frac{2\left\langle\nabla v,\nabla \left(
\frac{S(x,t,u)}{u}\right)\right\rangle}{(1-v)^2}\\&~~~~
+\frac{2|\nabla v|^2\Delta v}{(1-v)^3}
+\frac{2|\nabla v|^4}{(1-v)^3}+\frac{2|\nabla v|^2\frac{S(x,t,u)}{u}}{(1-v)^3}
.\endaligned
\end{equation}
This and~\eqref{nu:3}, after the cancellation of one term, give that
\begin{equation}\label{nu:9}
\aligned
\Delta w-w_t&=\frac{\Delta|\nabla v|^2}{(1-v)^2}-
\frac{2\langle\nabla v,\nabla\Delta v\rangle}{(1-v)^2}
-\frac{2\langle\nabla v,\nabla|\nabla v|^2\rangle}{(1-v)^2}
-\frac{2\left\langle\nabla v,\nabla\left(
\frac{S(x,t,u)}{u}\right)\right\rangle}{(1-v)^2}
\\&~~~~+\frac{4\langle\nabla|\nabla v|^2,\nabla v\rangle}{(1-v)^3}
-\frac{2|\nabla v|^4}{(1-v)^3}-\frac{2 |\nabla v|^2\frac{S(x,t,u)}{u}}{(1-v)^3}+\frac{6|\nabla v|^4}{(1-v)^4}.
\endaligned
\end{equation}

Now we recall the Bochner's formula, according to which
$$ \Delta {\bigg (}{\frac {|\nabla v|^{2}}{2}}{\bigg )}=\langle \nabla \Delta v,\nabla v\rangle +|D^2 v|^{2}+{\mbox{Ric}}(\nabla v,\nabla v).$$
This and~\eqref{ricci k} entail that
\begin{eqnarray*} \Delta|\nabla v|^{2}-2\langle \nabla \Delta v,\nabla v\rangle &=&
2|D^2 v|^{2}+2{\mbox{Ric}}(\nabla v,\nabla v)\\&\ge&
2|D^2 v|^{2}-2k\,|\nabla v|^2
.\end{eqnarray*}
Plugging this information in~\eqref{nu:9}, we conclude that
\begin{equation}\label{nu:19}
\aligned
\Delta w-w_t&\ge \frac{2|D^2 v|^{2}-2k\,|\nabla v|^2}{(1-v)^2}
-\frac{2\langle\nabla v,\nabla|\nabla v|^2\rangle}{(1-v)^2}
-\frac{2\left\langle\nabla v,\nabla\left(
\frac{S(x,t,u)}{u}\right)\right\rangle}{(1-v)^2}
\\&~~~~+\frac{4\langle\nabla|\nabla v|^2,\nabla v\rangle}{(1-v)^3}
-\frac{2|\nabla v|^4}{(1-v)^3}-\frac{2 |\nabla v|^2\frac{S(x,t,u)}{u}}{(1-v)^3}+\frac{6|\nabla v|^4}{(1-v)^4}.
\endaligned
\end{equation}
We also remark that
\begin{eqnarray*}0&\le&\left(\frac{\langle\nabla|\nabla v|^2,\nabla v\rangle}{2|\nabla v|^2}
+\frac{|\nabla v|^2}{1-v}\right)^2\\
&=&
\left(\frac{\langle\nabla|\nabla v|^2,\nabla v\rangle}{2|\nabla v|^2}\right)^2
+
\frac{\langle\nabla|\nabla v|^2,\nabla v\rangle}{1-v}
+\frac{|\nabla v|^4}{(1-v)^2}\\&=&
\left(\frac{\langle D^2\,v \nabla v,\nabla v\rangle}{|\nabla v|^2}\right)^2
+
\frac{\langle\nabla|\nabla v|^2,\nabla v\rangle}{1-v}
+\frac{|\nabla v|^4}{(1-v)^2}\\&\le&
|D^2v|^2+
\frac{\langle\nabla|\nabla v|^2,\nabla v\rangle}{1-v}
+\frac{|\nabla v|^4}{(1-v)^2}.
\end{eqnarray*}
{F}rom this and~\eqref{nu:19}, one finds that
\begin{equation}\label{nu:20}
\aligned
\Delta w-w_t&\ge -\frac{2k\,|\nabla v|^2}{(1-v)^2}
-\frac{2\langle\nabla v,\nabla|\nabla v|^2\rangle}{(1-v)^2}
-\frac{2\left\langle\nabla v,\nabla\left(
\frac{S(x,t,u)}{u}\right)\right\rangle}{(1-v)^2}
\\&~~~~+\frac{2\langle\nabla|\nabla v|^2,\nabla v\rangle}{(1-v)^3}
-\frac{2|\nabla v|^4}{(1-v)^3}-\frac{2 |\nabla v|^2\frac{S(x,t,u)}{u}}{(1-v)^3}+\frac{4|\nabla v|^4}{(1-v)^4}.
\endaligned
\end{equation}
Furthermore, in light of~\eqref{nu:1},
\begin{equation}\nonumber
\langle\nabla w,\nabla v\rangle=\frac{\langle\nabla v,\nabla|\nabla v|^2\rangle}{(1-v)^2}+\frac{2|\nabla v|^4}{(1-v)^3},
\end{equation}
and, as a result,
\begin{eqnarray*}
&&2\langle\nabla w,\nabla v\rangle=\frac{2\langle\nabla v,\nabla|\nabla v|^2\rangle}{(1-v)^2}+\frac{4|\nabla v|^4}{(1-v)^3}
\\{\mbox{and }}
&&\frac{2\langle\nabla w,\nabla v\rangle}{1-v}=\frac{2\langle\nabla v,\nabla|\nabla v|^2\rangle}{(1-v)^3}+\frac{4|\nabla v|^4}{(1-v)^4}.
\end{eqnarray*}
These identities, combined with~\eqref{nu:20}, yield that
\begin{equation*}
\aligned
\Delta w-w_t&\ge -\frac{2k\,|\nabla v|^2}{(1-v)^2}
-\frac{2\left\langle\nabla v,\nabla\left(
\frac{S(x,t,u)}{u}\right)\right\rangle}{(1-v)^2}
\\&~~~~
+\frac{2|\nabla v|^4}{(1-v)^3}-\frac{2 |\nabla v|^2\frac{S(x,t,u)}{u}}{(1-v)^3}-
2\langle\nabla w,\nabla v\rangle+
\frac{2\langle\nabla w,\nabla v\rangle}{1-v}.
\endaligned
\end{equation*}
We rewrite this formula as
\begin{equation*}
\aligned\frac{
\Delta w-w_t}2&\ge -\frac{k\,|\nabla v|^2}{(1-v)^2}
-\frac{\left\langle\nabla v,\nabla\left(
\frac{S(x,t,u)}{u}\right)\right\rangle}{(1-v)^2}
\\&~~~~
+\frac{|\nabla v|^4}{(1-v)^3}-\frac{ |\nabla v|^2\frac{S(x,t,u)}{u}}{(1-v)^3}+
\frac{v\,\langle\nabla w,\nabla v\rangle}{1-v}.
\endaligned
\end{equation*}
As a result, recalling~\eqref{DE w}, we conclude that
\begin{equation}\label{nu:21}
\frac{
\Delta w-w_t}2\ge -kw
-\frac{\left\langle\nabla v,\nabla\left(
\frac{S(x,t,u)}{u}\right)\right\rangle}{(1-v)^2}
+(1-v)w^2-\frac{ w\,S(x,t,u)}{(1-v)u}+
\frac{v\,\langle\nabla w,\nabla v\rangle}{1-v}.
\end{equation}
We also exploit~\eqref{Jna:1} to write that~$\nabla u=u\nabla v$, and accordingly
\begin{eqnarray*} \nabla\left(\frac{S(x,t,u)}{u}\right)&=&
\frac{\nabla S(x,t,u)}{u}+
\frac{\partial_u S(x,t,u)\nabla u}{u}
-\frac{S(x,t,u)\nabla u}{u^2}\\&=&
\frac{\nabla S(x,t,u)}{u}+\left(
\partial_u S(x,t,u)
-\frac{S(x,t,u)}{u}\right)\nabla v.
\end{eqnarray*}
Consequently,
\begin{eqnarray*}
\frac{\left\langle\nabla v,\nabla\frac{S(x,t,u)}{u}\right\rangle}{(1-v)^2}&=&
\frac{\langle\nabla S(x,t,u),\nabla v\rangle}{(1-v)^2u}+\left(
\partial_u S(x,t,u)
-\frac{S(x,t,u)}{u}\right)\frac{|\nabla v|^2}{(1-v)^2}\\&=&
\frac{\langle\nabla S(x,t,u),\nabla v\rangle}{(1-v)^2u}+\left(
\partial_u S(x,t,u)
-\frac{S(x,t,u)}{u}\right)\,w.
\end{eqnarray*}
This, \eqref{GAMMA}, \eqref{LA:23} and~\eqref{nu:21} lead to
\begin{equation*}\begin{split}
\frac{
\Delta w-w_t}2\,\ge\,&
(1-v)w^2+
\frac{v\,\langle\nabla w,\nabla v\rangle}{1-v}-\frac{\langle\nabla S(x,t,u),\nabla v\rangle}{(1-v)^2u}\\&\qquad-
\left(k+
\partial_u S(x,t,u)
-\frac{S(x,t,u)}{u}
+\frac{ S(x,t,u)}{(1-v)u}
\right)\,w\\
\ge\,&
(1-v)w^2+
\frac{v\,\langle\nabla w,\nabla v\rangle}{1-v}-\frac{\langle\nabla S(x,t,u),\nabla v\rangle}{(1-v)^2u}\\&\qquad-
\left(k+
\partial_u S(x,t,u)
-\frac{S(x,t,u)}{u}
+\frac{ S(x,t,u)}{(1-v)u}
\right)_+\,w\\
\ge\,&
(1-v)w^2+
\frac{v\,\langle\nabla w,\nabla v\rangle}{1-v}-\frac{\gamma\,|\nabla v|}{(1-v)^2}
-\mu w,
\end{split}
\end{equation*}
as desired.\end{proof}

In the proof of Theorem~\ref{TH2}, we will exploit suitable cut-off
functions. An important property of these auxiliary functions
lies in their precise detachment with respect to the zero level set.
The details of their construction are given in the following result:

\begin{lem}\label{PSI}
Let~$a\in(0,1)$, $R>0$ and $\rho \in (0,R)$. Then, there exists a decreasing
function~$\bar{\psi}\in C^2(\R,[0,1])$ such that
\begin{equation}\label{7132r3yhsjdsid}
{\mbox{$\bar{\psi}(r)=1$ for all~$r\in[0,R-\rho]$, \quad $\bar{\psi}(r)=0$
for all~$r\ge R$,}}
\end{equation}
and, for every~$r\ge0$,
\begin{equation}\label{si-a}
\rho|\bar{\psi}'(r)|+\rho^2|\bar{\psi}''(r)|\le C\big(\bar{\psi}(r)\big)^a,
\end{equation}
for some~$C>0$, depending only
on~$a$.
\end{lem}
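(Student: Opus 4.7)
The plan is to construct $\bar{\psi}$ by composing a single fixed profile with a linear change of variable that collapses the transition region to $[R-\rho,R]$. More precisely, I would fix once and for all a nondecreasing function $\xi\in C^{\infty}(\mathbb{R},[0,1])$ with $\xi\equiv 0$ on $(-\infty,0]$ and $\xi\equiv 1$ on $[1,+\infty)$; such a $\xi$ exists by the usual mollification of a piecewise linear transition. Given $a\in(0,1)$, I would then pick an integer $m$ with $m\geqslant 2/(1-a)$ and set $\phi:=\xi^{m}$, so that $\phi\in C^{2}(\mathbb{R},[0,1])$ still equals $0$ on $(-\infty,0]$ and $1$ on $[1,+\infty)$, and finally define
\[\bar{\psi}(r):=\phi\!\left(\frac{R-r}{\rho}\right).\]

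Verification of \eqref{7132r3yhsjdsid} is immediate: for $r\in[0,R-\rho]$ the argument of $\phi$ is at least $1$, so $\bar{\psi}(r)=1$, while for $r\geqslant R$ the argument is non-positive, so $\bar{\psi}(r)=0$; the monotonicity of $\bar{\psi}$ follows from the monotonicity of $\xi$ and the fact that the inner map $r\mapsto(R-r)/\rho$ is decreasing.

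The main step is the pointwise bound \eqref{si-a}. Since $\phi=\xi^{m}$, one computes $\phi'=m\xi^{m-1}\xi'$ and $\phi''=m(m-1)\xi^{m-2}(\xi')^{2}+m\xi^{m-1}\xi''$. Using the uniform bounds on $\xi'$ and $\xi''$, these give $|\phi'|\leqslant C_{m}\,\xi^{m-1}$ and $|\phi''|\leqslant C_{m}\,\xi^{m-2}$ everywhere. Because $\xi$ takes values in $[0,1]$ and the choice $m\geqslant 2/(1-a)$ is equivalent to $m-2\geqslant am$, one has $\xi^{m-2}\leqslant\xi^{am}=\phi^{a}$ and, a fortiori, $\xi^{m-1}\leqslant\phi^{a}$. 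The chain rule then yields $\bar{\psi}'(r)=-\phi'((R-r)/\rho)/\rho$ and $\bar{\psi}''(r)=\phi''((R-r)/\rho)/\rho^{2}$, whence
\[\rho\,|\bar{\psi}'(r)|+\rho^{2}\,|\bar{\psi}''(r)|\;=\;|\phi'|+|\phi''|\;\leqslant\;C\,\phi^{a}\;=\;C\,\bar{\psi}(r)^{a},\]
which is precisely \eqref{si-a}.

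No genuine obstacle is anticipated; the only subtlety is that the presence of the power $\bar{\psi}^{a}$ with $a<1$ on the right-hand side forces the profile to vanish with sufficiently high order at the endpoint $r=R$, and this is exactly what is arranged by raising the smooth cut-off $\xi$ to a power $m$ tuned to $a$. The resulting constant in \eqref{si-a} depends only on $a$ through $m$ and through the fixed uniform bounds on $\xi',\xi''$, in agreement with the statement of the lemma.
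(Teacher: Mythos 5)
Your proof is correct. The construction differs mildly from the paper's: there, the authors build an explicit piecewise profile $\alpha$, equal to $t^{2/(1-a)}$ on $[0,1/4]$ and to $1-(1-t)^4$ on $[3/4,1]$, then compose it with the linear map $r\mapsto(R-r)/\rho$; to verify the derivative bound they split into two cases according to whether $(R-r)/\rho$ lies in $[0,1/4]$ (where the explicit power formula is used) or in $(1/4,1]$ (where $\alpha$ is bounded away from zero, so the bound is immediate from $\|\alpha\|_{C^2}$). You instead take a fixed $C^\infty$ cut-off $\xi$, raise it to an integer power $m\geq 2/(1-a)$, and exploit that for $\xi\in[0,1]$ one has $\xi^{m-2}\leq\xi^{am}=\phi^a$; this makes the bound hold globally with a single computation and no case analysis. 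Both arguments hinge on the same key point, namely that the profile must vanish to order at least $2/(1-a)$ at the outer endpoint, but your globally-raised-power construction is slightly cleaner: it dispenses with the piecewise definition, avoids the case split, and isolates the essential algebraic inequality $m(1-a)\geq 2$ transparently. The only dependence of the constant on $a$ enters through the choice of $m$ and the fixed $C^2$ norm of $\xi$, exactly as you note.
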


\begin{proof}
We introduce
an increasing function~$\alpha\in C^2(\R,[0,1])$ such that:
\begin{equation}\label{ALFA}
\begin{split}&
{\mbox{$\alpha(t)=t^{\frac{2}{1-a}}$ for all~$t\in[0,1/4]$,}}\\&{\mbox{$\alpha(t)=1-(1-t)^4$ for all~$t\in[3/4,1]$,}}\\&{\mbox{$\alpha(t)=0$ for all~$t<0$,}}\\{\mbox{and }}\quad&{\mbox{$\alpha(t)=1$ for all~$t>1$.}}\end{split}\end{equation}
Let also
$$ [0,\infty)\ni r\longmapsto\bar{\psi}(r):=\alpha\left(\frac{R-r}{\rho}\right).$$
We observe that if~$r\in[0,R-\rho]$, then~$\frac{R-r}{\rho}\in\left[1,\frac{R}\rho
\right]$, hence~$\bar{\psi}(r)=1$.
Similarly, if~$r\ge R$, then~$\frac{R-r}{\rho}\le0$ and thus~$\bar{\psi}(r)=0$.
These considerations establish~\eqref{7132r3yhsjdsid}.

Now, we prove~\eqref{si-a}.
For this, in light of~\eqref{7132r3yhsjdsid}, it is enough to
consider the case in which~$r\in[R-\rho,R]$, since otherwise~$\bar{\psi}'(r)=\bar{\psi}''(r)=0$.
Now, when~$r\in[R-\rho,R]$, we have that~$\frac{R-r}{\rho}\in[0,1]$,
and we distinguish two cases:
\begin{eqnarray}
\label{CGAla:1}&&{\mbox{either }}\, \frac{R-r}{\rho}\in\left[0,\frac14\right],
\\
\label{CGAla:2}&&{\mbox{or }}\, \frac{R-r}{\rho}\in\left(\frac14,1\right].
\end{eqnarray}
Suppose first that~\eqref{CGAla:1} is satisfied. Then,~$\bar{\psi}(r)=
\alpha\left(\frac{R-r}{\rho}\right)=\left(\frac{R-r}{\rho}\right)^{\frac{2}{1-a}}$, and accordingly
\begin{eqnarray*}
&& \rho|\bar{\psi}'(r)|+\rho^2|\bar{\psi}''(r)|=
\frac{2}{1-a}\left(\frac{R-r}{\rho}\right)^{\frac{1+a}{1-a}}+
\frac{2(1+a)}{(1-a)^2}\left(\frac{R-r}{\rho}\right)^{\frac{2a}{1-a}}\\
&&\qquad\qquad=\left[
\frac{2}{1-a}\;\frac{R-r}{\rho}+
\frac{2(1+a)}{(1-a)^2}\right]\left(\frac{R-r}{\rho}\right)^{\frac{2a}{1-a}}\\
&&\qquad\qquad\le\left[
\frac{1}{2(1-a)}+
\frac{2(1+a)}{(1-a)^2}\right]\,\big(\bar{\psi}(r)\big)^a.
\end{eqnarray*}
This proves~\eqref{si-a} in this case, and we now suppose that~\eqref{CGAla:2}
holds true. In this situation, we exploit the monotonicity of~$\alpha$ to see that
$$ \bar{\psi}(r)\ge \alpha\left(\frac14\right)=\left(\frac14\right)^{\frac{2}{1-a}}.$$
As a consequence,
\begin{eqnarray*}&& \rho|\bar{\psi}'(r)|+\rho^2|\bar{\psi}''(r)|
=\left| \alpha'\left(\frac{R-r}{\rho}\right)\right|
+\left| \alpha''\left(\frac{R-r}{\rho}\right)\right|\\&&\qquad\quad\le
2\|\alpha\|_{C^2(\R)}\le 2\,4^{\frac{2a}{1-a}}\,\|\alpha\|_{C^2(\R)}\,\big(\bar{\psi}(r)\big)^a.\end{eqnarray*}
This ends the proof of~\eqref{si-a}, as desired.
\end{proof}

As a simple variant of Lemma~\ref{PSI}, we also provide the details of
an auxiliary cut-off function in the time variable:

\begin{lem}\label{ilLECAS2M}
Let~$t_0\in\R$ and~$T>0$.
Let~$a\in(0,1)$ and~$\delta\in(0,T)$. Then, there exists an increasing
function~$\phi\in C^2(\R,[0,1])$ such that
\begin{equation}\label{7132r3yhsjdsid-t}
{\mbox{$\phi(t)=0$ for all~$t\le t_0-T$, and~$\phi(t)=1$
for all~$t\ge t_0-T+\delta$,}}
\end{equation}
and, for every~$t\in\R$,
\begin{equation}\label{si-a-t}
\delta|\phi'(t)|\le C\big(\phi(t)\big)^{\frac{1+a}2},
\end{equation}
for some~$C>0$, depending only
on~$a$.
\end{lem}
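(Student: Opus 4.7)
The plan is to rescale in time the building-block function from Lemma~\ref{PSI}. Namely, using the increasing auxiliary function $\alpha\in C^2(\mathbb{R},[0,1])$ introduced in~\eqref{ALFA}, I would set
\[
\phi(t):=\alpha\!\left(\frac{t-(t_0-T)}{\delta}\right).
\]
This $\phi$ is increasing and of class $C^2(\mathbb{R},[0,1])$, and the endpoint conditions~\eqref{7132r3yhsjdsid-t} are immediate from the last two lines of~\eqref{ALFA}: if $t\le t_0-T$, the argument of $\alpha$ is nonpositive, so $\phi(t)=0$; if $t\ge t_0-T+\delta$, the argument is at least $1$, so $\phi(t)=1$.

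To verify~\eqref{si-a-t}, I would change variable to $s:=(t-(t_0-T))/\delta$, so that $\delta|\phi'(t)|=|\alpha'(s)|$, and then split cases in the spirit of the proof of Lemma~\ref{PSI}. In the critical range $s\in[0,1/4]$, formula~\eqref{ALFA} gives $\alpha(s)=s^{2/(1-a)}$, whence
\[
\alpha'(s)=\frac{2}{1-a}\,s^{(1+a)/(1-a)}\qquad\text{and}\qquad\alpha(s)^{(1+a)/2}=s^{(1+a)/(1-a)},
\]
so~\eqref{si-a-t} holds on this subinterval with constant $2/(1-a)$. In the range $s\in(1/4,1]$, the monotonicity of $\alpha$ forces $\alpha(s)\ge\alpha(1/4)=(1/4)^{2/(1-a)}>0$, hence $\alpha(s)^{(1+a)/2}\ge(1/4)^{(1+a)/(1-a)}$ is bounded below, while $|\alpha'(s)|\le\|\alpha\|_{C^1(\mathbb{R})}$ is bounded above; combining these two facts yields the estimate on this range as well. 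Outside $[0,1]$ the derivative $\alpha'$ vanishes and the bound is trivial. Taking the larger of the resulting constants produces the $C$ of the statement.

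There is essentially no analytic obstacle here: the argument is the time-only, first-order analogue of Lemma~\ref{PSI}. The exponent $2/(1-a)$ prescribed in the left tail of $\alpha$ was chosen precisely so that $\alpha^{(1+a)/2}$ matches $\alpha'$ up to a multiplicative constant. Moreover, since $(1+a)/2>a$ whenever $a\in(0,1)$, the target exponent in~\eqref{si-a-t} is milder than the exponent $a$ appearing in~\eqref{si-a}, so no modification of the function $\alpha$ from Lemma~\ref{PSI} is required and the only genuine ``work'' is the bookkeeping of constants across the two nontrivial subintervals.
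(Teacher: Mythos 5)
Your proposal is correct and follows essentially the same route as the paper: you define $\phi(t)=\alpha((t-t_0+T)/\delta)$ using the auxiliary function $\alpha$ from~\eqref{ALFA}, verify the endpoint conditions from the definition of $\alpha$, and prove the derivative bound by splitting into the polynomial regime near the left endpoint (where the exponent $2/(1-a)$ makes the estimate exact up to the constant $2/(1-a)$) and the regime where $\alpha$ is bounded below. The change of variables $s=(t-(t_0-T))/\delta$ is only a cosmetic repackaging of the paper's computations, so there is no substantive difference.
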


\begin{proof} Let~$\alpha$ be the function in~\eqref{ALFA} and define
$$ \phi(t):=\alpha\left(\frac{t-t_0+T}\delta\right).$$
Then, if~$t\le t_0-T$ we have that~$\frac{t-t_0+T}\delta\le0$ and thus~$\phi(t)=0$.
Similarly, if~$t\ge t_0-T+\delta$ then~$\frac{t-t_0+T}\delta\ge1$ and therefore~$\phi(t)=1$.
This proves~\eqref{7132r3yhsjdsid-t}.

To check \eqref{si-a-t}, we can suppose that~$t\in[t_0-T,\,t_0-T+\delta]$ (otherwise~$\phi'(t)$ and the claim
is obviously true). We distinguish two cases,
\begin{eqnarray}
&&{\mbox{either }} \, t\in\left[t_0-T,\,t_0-T+\frac\delta4\right]\label{UAH:1}\\
&&{\mbox{or }} \, t\in\left[t_0-T+\frac\delta4,\,t_0-T+\delta\right].\label{UAH:2}
\end{eqnarray}
If~\eqref{UAH:1} holds true, we have that
$$ \phi(t)=\left(\frac{t-t_0+T}\delta\right)^{\frac2{1-a}},$$
and therefore
$$ |\phi'(t)|=\frac2{(1-a)\delta}
\left(\frac{t-t_0+T}\delta\right)^{\frac{1+a}{1-a}}=\frac2{(1-a)\delta}\big(\phi(t)\big)^{\frac{1+a}{2}},$$
giving~\eqref{si-a-t} in this case.

If instead~\eqref{UAH:2} holds true, we use the monotonicity of~$\alpha$ to write that,
for every~$t\in\left[t_0-T+\frac\delta4,\,t_0-T+\delta\right]$,
$$ \phi(t)\ge \alpha\left(\frac{1}4\right)=\frac1{4^{\frac2{1-a}}},$$
and consequently
$$ |\phi'(t)|=\frac1\delta\left|\alpha'\left(\frac{t-t_0+T}\delta\right)\right|\le\frac{\|\alpha\|_{C^1(\R)}}{\delta}\le\frac{4^{\frac{1+a}{1-a}}\|\alpha\|_{C^1(\R)}}{\delta}\big(\phi(t)\big)^{\frac{1+a}{2}},$$
so that the proof of~\eqref{si-a-t} is concluded.
\end{proof}

To complete the proof of Theorem~\ref{TH2}, we now distinguish four
regimes, according to the cut-off functions in~\eqref{161}.
The estimates in each of these regimes will be dealt with
in the forthcoming Lemmata~\ref{CB:1}, \ref{CB:2}, \ref{CB:3}
and~\ref{CB:4}. To this end, it is also useful to point out the identity
(valid for all smooth and positive functions~$\psi$),
\begin{equation}\label{KASJJSnc}
\frac{\Delta(w\psi)-(w\psi)_t}{2}-\frac{
\langle\nabla(w\psi),\nabla\psi\rangle
}{\psi}=\frac{(\Delta w-w_t)\,\psi}{2}+\frac{(\Delta\psi-\psi_t)\,w}2
-\frac{w\,|\nabla\psi|^2}\psi
.
\end{equation}
Hence, subtracting~$\frac{v\,\langle\nabla v,\nabla(w\psi)\rangle}{1-v}$
to both sides of~\eqref{KASJJSnc},
\begin{equation*}\begin{split}&
\frac{\Delta(w\psi)-(w\psi)_t}{2}-
\left\langle\nabla(w\psi),\frac{\nabla\psi}{\psi}+\frac{v\,\nabla v}{1-v}\right\rangle
\\=\;&\frac{(\Delta w-w_t)\,\psi}{2}+\frac{(\Delta\psi-\psi_t)\,w}2
-\frac{w\,|\nabla\psi|^2}\psi-\frac{v\,\langle\nabla v,\nabla(w\psi)\rangle}{1-v}
.
\end{split}\end{equation*}
Whence it follows from Lemma~\ref{PI} that
\begin{equation}\label{GAH:00PRE}
\begin{split}
&\frac{\Delta(w\psi)-(w\psi)_t}{2}-
\left\langle\nabla(w\psi),\frac{\nabla\psi}{\psi}+\frac{v\,\nabla v}{1-v}\right\rangle
\\ \ge\,&
(1-v)\,w^2 \psi+
\frac{v\psi\,\langle\nabla w,\nabla v\rangle}{1-v}-\frac{\gamma\psi\,|\nabla v|}{(1-v)^2}
-\mu w\psi\\&\qquad+\frac{(\Delta\psi-\psi_t)\,w}2
-\frac{w\,|\nabla\psi|^2}\psi-\frac{v\,\langle\nabla v,\nabla(w\psi)\rangle}{1-v}.
\end{split}
\end{equation}
One can also notice that
$$
\frac{v\psi\,\langle\nabla w,\nabla v\rangle}{1-v}
-\frac{v\,\langle\nabla v,\nabla(w\psi)\rangle}{1-v}=-
\frac{vw\,\langle\nabla \psi,\nabla v\rangle}{1-v},
$$
and thus rewrite~\eqref{GAH:00PRE} in the form
\begin{equation}\label{GAH:00-lapreced}
\begin{split}
&\frac{\Delta(w\psi)-(w\psi)_t}{2}-
\left\langle\nabla(w\psi),\frac{\nabla\psi}{\psi}+\frac{v\,\nabla v}{1-v}\right\rangle
\\ \ge\,&
(1-v)\,w^2 \psi-\frac{\gamma\psi\,|\nabla v|}{(1-v)^2}
-\mu w\psi\\&\qquad+\frac{(\Delta\psi-\psi_t)\,w}2
-\frac{w\,|\nabla\psi|^2}\psi-
\frac{vw\,\langle\nabla \psi,\nabla v\rangle}{1-v}.
\end{split}
\end{equation}
In addition, from~\eqref{DE w}
and Young's inequality with exponents~$4$ and~$4/3$,
\begin{equation}\label{Inqw833222}\begin{split}
&\frac{\gamma\psi\,|\nabla v|}{(1-v)^2}=
\frac{\gamma\psi\,\sqrt{w}}{1-v}=\sqrt[4]{1-v}\;\sqrt{w}\;\sqrt[4]{\psi}\;
\frac{\gamma\psi^{\frac34}}{(1-v)^{\frac54}}
\\&\qquad\le
\frac{1}{4}(1-v)w^2\psi
+\frac{C\gamma^{4/3}\,\psi}{(1-v)^{5/3}},
\end{split}\end{equation}
for some~$C>0$.

Similarly, the use of~\eqref{DE w} and of the
Young's inequality with exponents~$4/3$ and~$4$
gives that, in the support of~$\psi$,
\begin{equation}\label{232:232}\begin{split}&\left|
\frac{vw\,\langle\nabla \psi,\nabla v\rangle}{1-v}\right|\le
\frac{|v|\,w\,|\nabla \psi|\;|\nabla v|}{1-v}=
|v|\,w^{3/2}\,|\nabla \psi|
\\&\qquad=
\left[\left(\frac23\right)^{3/4}\,(1-v)^{3/4}\,w^{3/2}\,\psi^{3/4}\right]
\;\left[\left(\frac32\right)^{3/4}\,\frac{|v|\,|\nabla \psi|}{
(1-v)^{3/4}\, \psi^{3/4}}\right]\\&\qquad \le
\frac14\,(1-v)\,w^{2}\,\psi+
\frac{C\,|v|^4\,|\nabla \psi|^4}{(1-v)^{3}\, \psi^{3}},
\end{split}
\end{equation}
up to renaming~$C>0$.

In light of~\eqref{Inqw833222}
and~\eqref{232:232}, we deduce from~\eqref{GAH:00-lapreced}
\begin{equation}\label{GAH:00-bijmwef9494}
\begin{split}
&\frac{\Delta(w\psi)-(w\psi)_t}{2}-
\left\langle\nabla(w\psi),\frac{\nabla\psi}{\psi}+\frac{v\,\nabla v}{1-v}\right\rangle
\\ \ge\,&\frac{
(1-v)\,w^2 \psi}4-\frac{C\gamma^{4/3}\,\psi}{(1-v)^{5/3}}
-\mu w\psi\\&\qquad+\frac{(\Delta\psi-\psi_t)\,w}2
-\frac{w\,|\nabla\psi|^2}\psi-
\frac{C\,|v|^4\,|\nabla \psi|^4}{(1-v)^{3}\, \psi^{3}}.
\end{split}
\end{equation}
Besides, by the Cauchy-Schwarz inequality,
\begin{equation}\label{FGgafmeg801} \mu w\psi=\big(\sqrt{1-v}\;w\;\sqrt{\psi}\big)
\;\left(\frac{\mu\, \sqrt{\psi}}{\sqrt{1-v}}\right)\le
\frac{(1-v)w^2\psi}{8}+
\frac{C\mu^2\psi}{{1-v}},
\end{equation}
up to renaming~$C$,
which, combined with~\eqref{GAH:00-bijmwef9494}, proves that
\begin{equation}\label{GAH:00}
\begin{split}
&\frac{\Delta(w\psi)-(w\psi)_t}{2}-
\left\langle\nabla(w\psi),\frac{\nabla\psi}{\psi}+\frac{v\,\nabla v}{1-v}\right\rangle
\\ \ge\,&\frac{
(1-v)\,w^2 \psi}8-\frac{C\gamma^{4/3}\,\psi}{(1-v)^{5/3}}
-
\frac{C\mu^2\psi}{{1-v}}
\\&\qquad+\frac{(\Delta\psi-\psi_t)\,w}2
-\frac{w\,|\nabla\psi|^2}\psi-
\frac{C\,|v|^4\,|\nabla \psi|^4}{(1-v)^{3}\, \psi^{3}}.
\end{split}
\end{equation}
We will use \eqref{GAH:00} as a pivotal inequality
in the forthcoming computations.
We have:

\begin{lem}\label{CB:1}
In the setting of Theorem~\ref{TH2},
in $B(x_0,R-\rho) \times [t_0-T,t_0]$ it holds
\begin{equation}\label{CA:0}
w \le\left[\tau_u^2+C\left(\gamma^{2/3}
+\mu +
\frac{1}{\rho^2}+\frac{1 }{ \rho(R-\rho)}
+\frac{\sqrt{ k_+ }}{\rho}\right)\right],
\end{equation}
for some~$C>0$.
\end{lem}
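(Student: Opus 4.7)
The plan is to apply the pivotal inequality~\eqref{GAH:00} with a purely
spatial cut-off function and then invoke the Maximum Principle on the
resulting quantity~$w\psi$ over the closure~$\bar{Q}_{R,T}$. Specifically,
I would take~$\psi(x):=\bar{\psi}(d(x,x_0))$, with~$\bar{\psi}$ as in
Lemma~\ref{PSI} for the exponent~$a=1/2$, so that~$\psi\equiv1$
on~$B(x_0,R-\rho)$, $\psi\equiv0$ outside~$B(x_0,R)$, and~$\psi_t\equiv0$.

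Let~$(x_*,t_*)$ realize the maximum of~$w\psi$ on~$\bar{Q}_{R,T}$. Three
cases arise. If~$x_*\in\partial B(x_0,R)$, then~$\psi(x_*)=0$
and so~$w\psi\leq0$, which is enough for~\eqref{CA:0} on~$B(x_0,R-\rho)$.
If~$t_*=t_0-T$, then, recalling~$w=\big(|\nabla u|/(u(1-v))\big)^2$ and the
definition~\eqref{sigtau} of~$\tau_u$, one gets~$(w\psi)(x_*,t_*)\leq\tau_u^2$,
which accounts for the~$\tau_u^2$ contribution in~\eqref{CA:0}. In the
remaining interior case, where~$x_*\in B(x_0,R)$ and~$t_*>t_0-T$, the
Maximum Principle provides~$\nabla(w\psi)=0$, $\Delta(w\psi)\leq0$,
and~$(w\psi)_t\geq0$ at~$(x_*,t_*)$, so that the left-hand side
of~\eqref{GAH:00} is nonpositive there and the inner-product term drops out.

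To close the interior case, I would bound the right-hand side of~\eqref{GAH:00}
using three ingredients. First, Lemma~\ref{PSI} with~$a=1/2$ furnishes
$|\nabla\psi|^2\leq C\psi/\rho^2$ and~$|\nabla\psi|^4/\psi^3\leq C/\rho^4$.
Second, the Laplacian comparison theorem under~\eqref{ricci k}, applied to
the radial cut-off and exploiting that~$\bar{\psi}$ is decreasing, yields
$|\Delta\psi|\leq C\sqrt{\psi}\,\bigl(1/\rho^2+1/(\rho(R-\rho))
+\sqrt{k_+}/\rho\bigr)$.
Third, the sign condition~\eqref{Jna:0} gives~$|v|\leq 1-v$, so that the
factor~$|v|^4/(1-v)^3$ is controlled by a single power of~$1-v$. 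After
multiplying the surviving form of~\eqref{GAH:00} by~$\psi$ so that the
dominant term becomes~$(1-v)(w\psi)^2/8$, and then using Young's inequality on
the cross terms~$|\Delta\psi|\,w\psi$ and~$w|\nabla\psi|^2$ to absorb them
into~$(w\psi)^2/16$, I would arrive at
$(w\psi)^2\leq C\bigl[\gamma^{4/3}+\mu^2+1/\rho^4+1/(\rho(R-\rho))^2
+k_+/\rho^2\bigr]$.
Taking square roots and recalling~$\psi\equiv1$ on~$B(x_0,R-\rho)$ gives
exactly~\eqref{CA:0}.

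The main technical obstacle is the Laplacian comparison step, since the
standard inequality for~$\Delta d(\cdot,x_0)$ degenerates at the cut locus
of~$x_0$. The remedy I would adopt is the classical Calabi trick: replace
the distance by a suitable smooth approximation (or equivalently perturb the
basepoint), perform the computation at the perturbed maximum, and let the
perturbation parameter tend to zero. Once this point is handled, the rest
of the proof is bookkeeping with Young's inequality at exponents carefully
matched to the powers of~$(1-v)$ and~$\psi$ appearing in~\eqref{GAH:00}.
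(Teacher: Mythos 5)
Your proposal follows essentially the same route as the paper: the same spatial cut-off from Lemma~\ref{PSI} with $a=1/2$, the same maximum-point dichotomy (the boundary case $x_*\in\partial B(x_0,R)$ forces $w\psi\equiv0$ and is vacuous, $t_*=t_0-T$ yields $\tau_u^2$, and the interior case uses the pivotal inequality~\eqref{GAH:00} together with Laplacian comparison and Young-type absorptions, with $1/d$ controlled by $1/(R-\rho)$ since $\Delta\psi=0$ in $B(x_0,R-\rho)$). One small bookkeeping slip: from Lemma~\ref{PSI} with $a=1/2$ one gets $|\nabla\psi|^4/\psi^3\le C/(\psi\rho^4)$, not $C/\rho^4$, but your subsequent step of multiplying through by $\psi$ (so the quadratic term becomes $(1-v)(w\psi)^2$, precisely what the paper does in passing to~\eqref{39029735-7}) absorbs this extra $1/\psi$ and closes the argument; your remark on the Calabi trick at the cut locus is an appropriate gloss on a subtlety the paper handles by citation.
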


\begin{proof}
Let~$a\in(0,1)$, to be conveniently chosen in what follows.
For every~$x\in B(x_0,R)$,
we define \begin{equation}\label{PSIDEFINI}\psi(x):=\bar{\psi}(d(x,x_0)),\end{equation}
where~$d(\cdot,\cdot)$ represents the geodesic distance
and~$\bar{\psi}$ is the function introduced in Lemma~\ref{PSI}.

By~\eqref{ricci k}, we know that
\begin{equation*}
\Delta d(x,x_0)\le\frac{n-1}{d(x,x_0)}+\sqrt{(n-1)k_+},
\end{equation*}
see e.g.~\cite[formula (2.1)]{4CM} and the references therein.

This and~\eqref{si-a} entail that
\begin{equation}\label{2:30}
\begin{split}&
|\nabla\psi(x)|= |\bar{\psi}'(d(x,x_0))\,\nabla d(x,x_0)|\le
\frac{C\big( \psi(x)\big)^a}\rho\\{\mbox{and }}\quad&-\Delta\psi(x)=
-\bar{\psi}'(d(x,x_0))\Delta d(x,x_0)-
\bar{\psi}''(d(x,x_0))|\nabla d(x,x_0)|^2\\&\qquad\qquad\le
\frac{C\big( \psi(x)\big)^a}\rho\,\left(\frac{n-1}{d(x,x_0)}+\sqrt{(n-1)k_+}\right)+
\frac{C\big( \psi(x)\big)^a}{\rho^2}.
\end{split}\end{equation}
Now, we consider~$\tilde w:=w\psi$ and,
in the support of~$\psi$, we can exploit~\eqref{GAH:00}
and find that
\begin{equation}\label{GAH203e}
\begin{split}
&\frac{\Delta\tilde w-\tilde w_t}{2}-
\left\langle\nabla \tilde w,\frac{\nabla\psi}{\psi}+\frac{v\,\nabla v}{1-v}\right\rangle
\\ \ge\,&\frac{
(1-v)\,w^2 \psi}8-\frac{C\gamma^{4/3}\,\psi}{(1-v)^{5/3}}
-\frac{C\mu^2\psi}{{1-v}}\\&\qquad+\frac{\Delta\psi\,w}2
-\frac{w\,|\nabla\psi|^2}\psi-\frac{C\,|v|^4\,|\nabla \psi|^4}{(1-v)^{3}\, \psi^{3}}.
\end{split}
\end{equation}
We take~$(x_1, t_1)$ in the closure of~$Q_{R,T}$
realizing the maximum of~$\tilde w$.
Since~$\tilde w(x,t)=0$ if~$x\in\partial B(x_0,R)$,
necessarily~$x_1$ is an interior point of~$B(x_0,R)$.
Consequently~$\nabla\tilde w(x_1,t_1)=0$ and~$\Delta \tilde w(x_1,t_1)\le0$.
Hence, inserting this information into~\eqref{GAH203e},
we obtain that
\begin{equation}\label{83rxfu}
\begin{split}&
0 \ge\frac{\tilde w_t}{2}+\frac{
(1-v)\,w^2 \psi}8-\frac{C\gamma^{4/3}\,\psi}{(1-v)^{5/3}}
-\frac{C\mu^2\psi}{{1-v}}\\&\qquad\qquad+\frac{\Delta\psi\,w}2
-\frac{w\,|\nabla\psi|^2}\psi-
\frac{C\,|v|^4\,|\nabla \psi|^4}{(1-v)^{3}\, \psi^{3}}
\Bigg|_{(x,t)=(x_1,t_1)}.
\end{split}
\end{equation}
We now distinguish two cases,
\begin{eqnarray}
\label{CA:1}&&{\mbox{either \, $t_1=t_0-T$,}}
\\&&{\mbox{or \, $t_1\in (t_0-T,t_0]$.}}\label{CA:2}
\end{eqnarray}
Suppose first that~\eqref{CA:1} holds true.
Then, for every~$(x,t)\in Q_{R,T}$,
\begin{equation}\nonumber
\aligned
\tilde w (x,t)&\leq \tilde w(x_1,t_0-T)\\&\leq
\sup_{x\in B(x_0,R)} \tilde w(x,t_0-T)\\&\leq\sup_{x\in B(x_0,R)}w(x,t_0-T)\\&=\sup_{x\in B(x_0,R)}\left ( \frac{|\nabla u|^2}{u^2(1-v)^2}\right )(x,t_0-T)
\\&\leq \tau_u^2,
\endaligned
\end{equation}
thanks to~\eqref{sigtau} and~\eqref{DE w}.
In particular, for all~$(x,t)\in B(x_0,R-\rho) \times[t_0-T,t_0]$,
$$ w(x,t)=\tilde w (x,t)\le\tau_u^2,$$
and this proves~\eqref{CA:0} in this case.

Hence, to complete the proof of~\eqref{CA:0}, we now consider the
case in which~\eqref{CA:2} is satisfied. Then, $\tilde w_t(x_1,t_1)\ge0$,
and consequently~\eqref{83rxfu} entails that
\begin{equation}\begin{split}\label{21e5}
&0 \ge\frac{
(1-v)\,w^2 \psi}8-\frac{C\gamma^{4/3}\,\psi}{(1-v)^{5/3}}
-\frac{C\mu^2\psi}{{1-v}}+\frac{\Delta\psi\,w}2
\\&\qquad\qquad-\frac{w\,|\nabla\psi|^2}\psi-
\frac{C\,|v|^4\,|\nabla \psi|^4}{(1-v)^{3}\, \psi^{3}}
\Bigg|_{(x,t)=(x_1,t_1)}.
\end{split}\end{equation}
Our goal is now to estimate the terms in~\eqref{21e5}
that contain powers of~$w$ strictly less than~$2$, in order to ``reabsorb'' them into
the quadratic term.
To this end, exploiting~\eqref{2:30}
inside~\eqref{21e5}, and renaming~$C>0$ (possibly depending
on~$a$), we find that
\begin{equation}\begin{split}\label{21e5:BIS}
&\frac18\,(1-v)\,w^2 \psi\Bigg|_{(x,t)=(x_1,t_1)} \\
&\le
\frac{C\gamma^{4/3}\,\psi}{(1-v)^{5/3}}
+\frac{C\mu^2\psi}{{1-v}}-\frac{\Delta\psi\,w}2
+\frac{Cw\psi^{2a-1}}{\rho^2}+
\frac{C\,|v|^4\,\psi^{4a-3}}{(1-v)^{3}\, \rho^4}\Bigg|_{(x,t)=(x_1,t_1)}.
\end{split}\end{equation}
Also, from~\eqref{2:30},
\begin{eqnarray*}
-\frac{\Delta\psi\,w}2&\le&\frac{Cw}{2}\,\left[
\frac{\psi^a }{ \rho}\,\left(\frac{n-1}{d}+\sqrt{(n-1)k_+}\right)+
\frac{\psi^a }{ \rho^2}\right]\\
&=&
\frac{C\,\sqrt{1-v}\;w\;\sqrt{\psi}}{2}\,\left[
\rho\,\left(\frac{n-1}{d}+\sqrt{(n-1)k_+}\right)+1
\right]\frac{\psi^{a-\frac12} }{ \sqrt{1-v}\;\rho^2}
\end{eqnarray*}
where~$d:=d(x,x_0)$.
Then, by the Cauchy-Schwarz inequality,
\begin{equation}\label{Qua4d}-\frac{\Delta\psi\,w}2\le
\frac{(1-v)w^2\psi}{16}+C\left[
\rho\,\left(\frac{n-1}{d}+\sqrt{(n-1)k_+}\right)+1
\right]^2\frac{\psi^{2a-1} }{ (1-v)\rho^4}
,\end{equation}
up to renaming~$C$.

We also remark that when~$x\in B(x_0,R-\rho)$, then~$d=d(x,x_0)\in[0,R-\rho)$,
and thus~$\frac{R-d}{\rho}>1$, which gives that
$$ \psi(x)=\bar{\psi}(d)=\alpha\left(\frac{R-d}{\rho}\right)=1.$$
In particular,
\begin{equation} \label{781342536475:0293458667676644}
\Delta\psi(x)=0\qquad{\mbox{ for all }}x\in B(x_0,R-\rho).\end{equation}
Now we claim that
\begin{equation}\label{98172364098765efgdf5uyhji345677}
-\frac{\Delta\psi\,w}2
\,\le\,
\frac{(1-v)w^2\psi}{16}+
\frac{C\psi^{2a-1} }{ (1-v)\rho^2(R-\rho)^2}
+\frac{C k_+\psi^{2a-1} }{ (1-v)\rho^2}+
\frac{C\psi^{2a-1} }{ (1-v)\rho^4}
,\end{equation}
up to renaming~$C$.
To prove this, we first observe that in~$B(x_0,R-\rho)$
the claim is obvious, thanks to~\eqref{781342536475:0293458667676644}.
Hence, we can focus
on the complement of~$B(x_0,R-\rho)$,
where
\begin{equation}\label{KA7281323tf7124r53734t9ghghghhgh24356}
d\ge R-\rho.\end{equation}
Then, we can exploit~\eqref{Qua4d}, combined with~\eqref{KA7281323tf7124r53734t9ghghghhgh24356},
and obtain~\eqref{98172364098765efgdf5uyhji345677},
as desired.

Thus, we insert~\eqref{98172364098765efgdf5uyhji345677} into~\eqref{21e5:BIS}
and we obtain that
\begin{equation}\label{39029735-3}\begin{split}&
\frac1{16}\,(1-v)\,w^2 \psi\Bigg|_{(x,t)=(x_1,t_1)} \le
\frac{C\gamma^{4/3}\,\psi}{(1-v)^{5/3}}
+\frac{C\mu^2\psi}{{1-v}}
+\frac{Cw\psi^{2a-1}}{\rho^2}\\&\qquad\qquad+
\frac{C|v|^4\,\psi^{4a-3}}{(1-v)^{3}\, \rho^4}+\frac{C\psi^{2a-1} }{ (1-v)\rho^2(R-\rho)^2}
+\frac{C k_+\psi^{2a-1} }{ (1-v)\rho^2}+
\frac{C\psi^{2a-1} }{ (1-v)\rho^4}\Bigg|_{(x,t)=(x_1,t_1)}.\end{split}
\end{equation}
Furthermore, using the Cauchy-Schwarz inequality,
\begin{equation}\label{23423435AYH}
\begin{split}&
\frac{Cw\psi^{2a-1}}{\rho^2}=
\sqrt{1-v}\;w\;\sqrt{\psi}\;
\frac{C\psi^{2a-\frac32}}{\sqrt{1-v}\;\rho^2}
\le\frac1{32}
(1-v)w^2\psi+\frac{
C\psi^{4a-3}}{(1-v)\rho^4},
\end{split}\end{equation}
up to renaming~$C$, which together with~\eqref{39029735-3} entails that
at the point~$(x,t)=(x_1,t_1)$
\begin{equation}\label{39029735-6}\begin{split}&
\frac1{32}\,(1-v)\,w^2 \psi \le
\frac{C\gamma^{4/3}\,\psi}{(1-v)^{5/3}}
+\frac{C\mu^2\psi}{{1-v}}+\frac{
C\psi^{4a-3}}{(1-v)\rho^4}\\&\qquad\qquad+
\frac{C|v|^4\,\psi^{4a-3}}{(1-v)^{3}\, \rho^4}+\frac{C\psi^{2a-1} }{ (1-v)\rho^2(R-\rho)^2}
+\frac{C k_+\psi^{2a-1} }{ (1-v)\rho^2}+
\frac{C\psi^{2a-1} }{ (1-v)\rho^4}.\end{split}
\end{equation}
Now, up to renaming~$C$, and recalling the maximizing property of~$(x_1,t_1)$,
we can rewrite~\eqref{39029735-6} in case~\eqref{CA:2}
in the form
\begin{equation}\label{39029735-7}\begin{split}
\sup_{Q_{R,T}}w^2 \psi^2 \le\;&
\frac{C\gamma^{4/3}\,\psi^2}{(1-v)^{8/3}}
+\frac{C\mu^2\psi^2}{{(1-v)^2}}+\frac{
C\psi^{4a-2}}{(1-v)^2\rho^4}+
\frac{C|v|^4\,\psi^{4a-2}}{(1-v)^{4}\, \rho^4}\\&\qquad
+\frac{C\psi^{2a} }{ (1-v)^2\rho^2(R-\rho)^2}
+\frac{C k_+\psi^{2a} }{ (1-v)^2\rho^2}+
\frac{C\psi^{2a} }{ (1-v)^2\rho^4}\Bigg|_{(x,t)=(x_1,t_1)}.\end{split}
\end{equation}
Now we choose~$a:=1/2$, and we recall that~$0\le\psi\le1$ and that~$\psi=1$
in~$B(x_0,R-\rho)$. In this way, in case~\eqref{CA:2} we deduce from~\eqref{39029735-7} that
\begin{equation}\label{39029735-8}\begin{split}&\sup_{B(x_0,R-\rho)
\times(t_0-T,t_0]
}w^2 =
\sup_{B(x_0,R-\rho)
\times(t_0-T,t_0]
}w^2 \psi^2 \\&\quad\le
\frac{C\gamma^{4/3} }{(1-v)^{8/3}}
+\frac{C\mu^2 }{{(1-v)^2}}+\frac{
C}{(1-v)^2\rho^4}\\&\qquad\qquad+
\frac{C|v|^4}{(1-v)^{4}\, \rho^4}+\frac{C }{ (1-v)^2\rho^2(R-\rho)^2}
+\frac{C k_+ }{ (1-v)^2\rho^2}\Bigg|_{(x,t)=(x_1,t_1)}
.\end{split}
\end{equation}
Up to renaming~$C$, we can also rewrite~\eqref{39029735-8}
as
\begin{equation*}
\begin{split}&\sup_{B(x_0,R-\rho)
\times(t_0-T,t_0]
}w^2 \le
\frac{C\gamma^{4/3} }{(1-v)^{8/3}}
+\frac{C\mu^2 }{{(1-v)^2}}\\&\qquad\qquad+
\frac{C(1+v^4)}{(1-v)^{4}\, \rho^4}+\frac{C }{ (1-v)^2\rho^2(R-\rho)^2}
+\frac{C k_+ }{ (1-v)^2\rho^2}\Bigg|_{(x,t)=(x_1,t_1)}.\end{split}
\end{equation*}
That is, recalling~\eqref{Jna:0},
\begin{equation*}
\sup_{B(x_0,R-\rho)
\times(t_0-T,t_0]}w^2 \le
C\gamma^{4/3}
+C\mu^2+
\frac{C}{ \rho^4}+\frac{C }{ \rho^2(R-\rho)^2}
+\frac{C k_+ }{\rho^2},
\end{equation*}
which establishes~\eqref{CA:0}, as desired.
\end{proof}

\begin{lem}\label{CB:2}
In the setting of Theorem~\ref{TH2},
in~$B(x_0,R)\times [t_0-T+\delta,t_0]$ it holds
\begin{equation}\label{C3456dffA:0}
w\le\left[\sigma_u^2+C\left(\gamma^{2/3}+
\mu+\frac{1}{\delta }\right)\right],
\end{equation}
for some~$C>0$.
\end{lem}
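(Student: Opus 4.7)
The plan is to argue in parallel to Lemma~\ref{CB:1}, but with the spatial cut-off $\bar\psi$ replaced by the time cut-off $\phi$ provided by Lemma~\ref{ilLECAS2M}. Explicitly, I would set $\tilde w(x,t):=w(x,t)\,\phi(t)$ and apply the pivotal inequality~\eqref{GAH:00} with $\psi:=\phi$. The decisive simplification is that $\phi$ depends only on $t$, so $\nabla\phi\equiv 0$ and $\Delta\phi\equiv 0$; this kills the terms $\frac{w|\nabla\psi|^2}{\psi}$ and $\frac{C|v|^4|\nabla\psi|^4}{(1-v)^3\psi^3}$ together with the $\frac{\Delta\psi\,w}{2}$ contribution in~\eqref{GAH:00}, leaving only $-\phi_t\,w/2$ from the $(\Delta\psi-\psi_t)w/2$ term, while the left-hand-side gradient correction reduces to $-\langle\nabla\tilde w,\frac{v\,\nabla v}{1-v}\rangle$ (which will vanish at a maximum of $\tilde w$).

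Next, pick $(x_1,t_1)$ in the closure of $Q_{R,T}$ realizing the maximum of $\tilde w$, and distinguish three exhaustive cases. (i) If $t_1=t_0-T$, then $\phi(t_1)=0$ forces $\sup\tilde w=0$, and~\eqref{C3456dffA:0} holds trivially. (ii) If $t_1>t_0-T$ and $x_1\in\partial B(x_0,R)$, the definitions~\eqref{sigtau} and~\eqref{DE w} yield $w(x_1,t_1)\le\sigma_u^2$, hence $\sup\tilde w=\tilde w(x_1,t_1)\le\sigma_u^2\,\phi(t_1)\le\sigma_u^2$. (iii) If $t_1>t_0-T$ and $x_1$ is interior to $B(x_0,R)$, then at $(x_1,t_1)$ one has $\nabla\tilde w=0$, $\Delta\tilde w\le 0$ and $\tilde w_t\ge 0$, and the maximum-principle argument applies.

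In case (iii), feeding these conditions into~\eqref{GAH:00} specialized to $\psi=\phi$ gives, at $(x_1,t_1)$,
\begin{equation*}
0\,\ge\,\frac{(1-v)\,w^2\phi}{8}-\frac{C\gamma^{4/3}\phi}{(1-v)^{5/3}}-\frac{C\mu^2\phi}{1-v}-\frac{\phi_t\,w}{2}.
\end{equation*}
The main technical point, and the only nontrivial step, is to reabsorb $\phi_t w/2$ into the leading $(1-v)w^2\phi$. I would apply Young's inequality
\begin{equation*}
\frac{\phi_t\,w}{2}\,\le\,\frac{(1-v)\,w^2\phi}{16}+\frac{C\,\phi_t^2}{(1-v)\,\phi},
\end{equation*}
and then invoke the detachment estimate~\eqref{si-a-t} (choosing e.g. $a=1/2$), which yields $\phi_t^2/\phi\le C\phi^{a}/\delta^2\le C/\delta^2$. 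This is precisely the role of the bound $\delta|\phi'|\le C\phi^{(1+a)/2}$ in Lemma~\ref{ilLECAS2M}: it trades the singular factor $1/\phi$ against a harmless $\phi^a$.

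Multiplying the resulting pointwise inequality through by $\phi$ converts the left-hand side into $\tfrac{1}{16}(1-v)\tilde w^2$, and using $v\le 0$ (so $1-v\ge 1$) together with $0\le\phi\le 1$ yields
\begin{equation*}
\sup_{Q_{R,T}}\tilde w^2\,=\,\tilde w(x_1,t_1)^2\,\le\,C\bigl(\gamma^{4/3}+\mu^2+\delta^{-2}\bigr).
\end{equation*}
Taking square roots and combining with cases (i) and (ii) gives $\sup_{Q_{R,T}}\tilde w\le\sigma_u^2+C(\gamma^{2/3}+\mu+\delta^{-1})$. Since $\phi\equiv 1$ on $[t_0-T+\delta,t_0]$, on that time slab $\tilde w=w$, which is exactly~\eqref{C3456dffA:0}.
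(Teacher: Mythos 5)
Your proposal is correct and follows essentially the same route as the paper: define $\tilde w=w\phi$ with the time cut-off $\phi$ of Lemma~\ref{ilLECAS2M}, note that $\nabla\phi=\Delta\phi=0$ so that the pivotal inequality~\eqref{GAH:00} loses all its spatial cut-off terms, take a maximum point of $\tilde w$, split according to boundary versus interior, and absorb $\phi_t w/2$ via Young's inequality and the detachment bound~\eqref{si-a-t}. The only cosmetic difference is that the paper applies~\eqref{si-a-t} before the Cauchy--Schwarz step while you apply Young's first, and you make the trivial case $t_1=t_0-T$ explicit rather than discarding it at once; both variants are equivalent.
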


\begin{proof} We take~$\phi$ as in Lemma~\ref{ilLECAS2M} (say, with~$a:=1/2$),
and we define~$\tilde w(x,t):=w(x,t)\phi(t)$. Then, in light of~\eqref{GAH:00},
\begin{equation}\label{wefdgGAH:00}
\begin{split}
\frac{\Delta\tilde w-\tilde w_t}{2}-
\left\langle\nabla\tilde w,\frac{v\,\nabla v}{1-v}\right\rangle
\ge\frac{
(1-v)\,w^2 \phi}8-\frac{C\gamma^{4/3}\,\psi}{(1-v)^{5/3}}
-\frac{C\mu^2\psi}{{1-v}} -\frac{\phi_t\,w}2.
\end{split}
\end{equation}
Suppose that the maximum of $\tilde w$ in the closure of~$Q_{R,T}$ is reached at $(x_1, t_1)$.
Since~$\tilde w=0$ when~$t=t_0-T$, we know that~$t_1\in(t_0-T,t_0]$.
As a result,
\begin{equation}\label{wefdgGAH:001}
\tilde w_t(x_1,t_1)\ge0.
\end{equation}
We then distinguish two cases,
\begin{eqnarray}
&&\label{SPALS:1}{\mbox{either }} \, x_1\in\partial B(x_0,R),\\
&&\label{SPALS:2}{\mbox{or }} \,
x_1\in B(x_0,R).
\end{eqnarray}
If~\eqref{SPALS:1} holds true, then, in~$ Q_{R,T}$,
\begin{equation}\nonumber
\aligned
\tilde w\leq \tilde w(x_1,t_1)&\leq
\sup_{{x\in\partial B(x_0,R)}\atop{t\in[t_0-T,t_0]}}\tilde w(x,t)\\
&\leq\sup_{{x\in\partial B(x_0,R)}\atop{t\in[t_0-T,t_0]}}w(x,t)\\
&=\sup_{{x\in\partial B(x_0,R)}\atop{t\in[t_0-T+\delta/2,t_0]}}
\Big|\frac{|\nabla u|^2}{u^2(1-v)^2}\Big|(x,t)\\&=\sigma_u^2,
\endaligned
\end{equation}
due to~\eqref{sigtau}
and~\eqref{DE w}.
In particular, since~$\phi=1$ for any~$t\ge t_0-T+\delta$, we have that
$$ w=\tilde{w}\le \sigma_u^2$$
in~$B(x_0,R)\times [t_0-T+\delta,t_0]$.

This proves~\eqref{C3456dffA:0} in this case, and we now suppose that~\eqref{SPALS:2}
holds true. Then, we have that~$\Delta \tilde w(x_1,t_1)\leq0$ and~$\nabla \tilde w(x_1,t_1)=0$.
These observations and~\eqref{wefdgGAH:001}, together with~\eqref{wefdgGAH:00},
yield that
\begin{equation}\label{wefdgGAH:003}
\begin{split}
0
\ge\frac{
(1-v)\,w^2 \phi}8-\frac{C\gamma^{4/3}\,\psi}{(1-v)^{5/3}}
-\frac{C\mu^2\psi}{{1-v}} -\frac{\phi_t\,w}2\Bigg|_{(x,t)=(x_1,t_1)}.
\end{split}
\end{equation}
Moreover, by~\eqref{si-a-t} and the Cauchy-Schwarz inequality,
\begin{equation}\label{we555fd666345678gGAH:003} \frac{\phi_t\,w}2\le
\frac{C\,\phi^{\frac{1+a}2}\,w}{2\delta}
=\frac{\sqrt{1-v}\;w\;\sqrt{\phi}}4\;
\frac{2C\phi^{\frac{a}2}}{\delta\sqrt{1-v}}\le
\frac{(1-v)w^2\phi}{16}+
\frac{C\phi^a}{\delta^2(1-v)},
\end{equation}
possibly renaming constants.
Plugging this into~\eqref{wefdgGAH:003}, we conclude that
$$
(1-v)\,w^2 \phi\Bigg|_{(x,t)=(x_1,t_1)}\le \frac{C\gamma^{4/3}\,\phi}{
(1-v)^{5/3}}
+\frac{C\mu^2\phi}{{1-v}}+\frac{C\phi^a}{\delta^2(1-v)}
\Bigg|_{(x,t)=(x_1,t_1)} .$$
That is
$$
w^2 \phi\Bigg|_{(x,t)=(x_1,t_1)}\le \frac{C\gamma^{4/3}
\,\phi}{(1-v)^{8/3}}
+\frac{C\mu^2\phi}{(1-v)^2}+\frac{C\phi^a}{\delta^2(1-v)^2}
\Bigg|_{(x,t)=(x_1,t_1)} .$$
Now, since~$0\le\phi\le1$ and~$\phi=1$ for any~$t\ge t_0-T+\delta$, this
implies that
$$ \sup_{B(x_0,R)\times [t_0-T+\delta,t_0]}w^2=
\sup_{B(x_0,R)\times [t_0-T+\delta,t_0]}w^2\phi^2\le
\frac{C\gamma^{4/3}}{(1-v)^{8/3}}
+\frac{C\mu^2}{(1-v)^2}+\frac{C}{\delta^2(1-v)^2}
\Bigg|_{(x,t)=(x_1,t_1)}.
$$
As a consequence, recalling also~\eqref{Jna:0},
we obtain~\eqref{C3456dffA:0}, as desired.
\end{proof}

\begin{lem}\label{CB:3}
In the setting of Theorem~\ref{TH2},
in~$B(x_0,R)\times[t_0-T,t_0]$ it holds
\begin{equation}\label{C3456543665888885yhdffA:0}
w\le\left[\sigma_u^2+\tau_u^2+
C\left(\gamma^{2/3}+\mu\right)
\right],
\end{equation}
for some~$C>0$.\end{lem}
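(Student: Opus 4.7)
The plan is to apply the Maximum Principle directly to $w$ on the compact cylinder $\overline{Q_{R,T}}$, without introducing any cut-off function. This is possible precisely because the estimate to be proved already allows both parabolic-boundary quantities $\sigma_u$ and $\tau_u$ on the right-hand side, so we do not need to localize away from either the spatial or temporal boundary. Concretely, one can specialize the inequality~\x{GAH:00} (or, equivalently, Lemma~\ref{PI}) to the trivial choice $\psi\equiv 1$.

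Let $(x_1,t_1)\in\overline{Q_{R,T}}$ be a point at which $w$ attains its maximum, and distinguish three cases according to the location of $(x_1,t_1)$. If $t_1=t_0-T$, then the definitions~\x{DE w} and~\x{sigtau} give at once
$$\sup_{Q_{R,T}}w=w(x_1,t_1)\le \tau_u^2.$$
If instead $x_1\in\partial B(x_0,R)$, the same definitions yield $\sup_{Q_{R,T}}w\le \sigma_u^2$. In either of these two cases the claim~\x{C3456543665888885yhdffA:0} follows immediately.

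It remains to treat the case in which $(x_1,t_1)$ is an interior spatial point with $t_1>t_0-T$. Then $\nabla w(x_1,t_1)=0$, $\Delta w(x_1,t_1)\le 0$ and $w_t(x_1,t_1)\ge 0$, so Lemma~\ref{PI} reduces at $(x_1,t_1)$ to
$$0\,\ge\,(1-v)\,w^2-\frac{\gamma\,|\nabla v|}{(1-v)^2}-\mu w.$$
I would then apply Young's inequality exactly as in~\x{Inqw833222} (with $\psi\equiv 1$) to bound the term $\gamma|\nabla v|/(1-v)^2$ by $\tfrac14(1-v)w^2+C\gamma^{4/3}/(1-v)^{5/3}$, and the Cauchy--Schwarz estimate~\x{FGgafmeg801} (again with $\psi\equiv 1$) to bound $\mu w$ by $\tfrac18(1-v)w^2+C\mu^2/(1-v)$. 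Reabsorbing the quadratic terms and dividing by $(1-v)\ge 1$ (using $v\le 0$ from~\x{Jna:0}) then yields $w(x_1,t_1)^2\le C(\gamma^{4/3}+\mu^2)$, whence $\sup_{Q_{R,T}}w\le C(\gamma^{2/3}+\mu)$.

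Combining the three cases gives~\x{C3456543665888885yhdffA:0}. The argument is technically lighter than the proofs of Lemmata~\ref{CB:1} and~\ref{CB:2} because, in the absence of any cut-off, no $|\nabla\psi|$, $\Delta\psi$ or $\phi_t$ terms need to be controlled; the only real work is the case analysis of where the maximum of $w$ is attained on $\overline{Q_{R,T}}$, the remaining manipulations being direct reuses of the Young and Cauchy--Schwarz reabsorption tricks already established in the derivation of~\x{GAH:00}.
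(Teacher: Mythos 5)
Your proposal is correct and follows essentially the same route as the paper: locate the maximum of $w$ on $\overline{Q_{R,T}}$, dispatch the parabolic-boundary cases via the definitions of $\tau_u$ and $\sigma_u$ in~\x{sigtau}, and in the interior case apply Lemma~\ref{PI} together with the Young/Cauchy--Schwarz reabsorptions~\x{Inqw833222} and~\x{FGgafmeg801} specialized to $\psi\equiv 1$. The only superficial difference is the partition of the closure into three (overlapping versus disjoint) regions, which has no bearing on the argument.
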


\begin{proof}
We suppose that the maximum of $w$ in the closure of~$Q_{R,T}$ is reached at the point~$(x_1, t_1)$.  We distinguish three possibilities:
\begin{eqnarray}
\label{CA:3-001} &&{\mbox{either }}\, x_1\in B(x_0,R) {\mbox{ and }}t_1\in(t_0-T,t_0]
,\\
\label{CA:3-002} &&{\mbox{or }}\, x_1\in B(x_0,R) {\mbox{ and }}t_1=t_0-T
,\\
\label{CA:3-003} &&{\mbox{or }}\, x_1\in \partial B(x_0,R) {\mbox{ and }}t_1\in[t_0-T,t_0].
\end{eqnarray}
Suppose first that~\eqref{CA:3-001} holds true. Then, we have that~$\Delta w(x_1,t_1)\le0$,
$\nabla w(x_1,t_1)=0$ and~$w_t(x_1,t_1)\ge0$. Therefore, in light of Lemma~\ref{PI},
we obtain that
\begin{equation}\label{54:1-1}
0\ge
(1-v)\,w^2-\frac{\gamma\,|\nabla v|}{(1-v)^2}
-\mu w\Bigg|_{(x,t)=(x_1,t_1)}.
\end{equation}
We insert~\eqref{Inqw833222} and~\eqref{FGgafmeg801}
(used here with~$\psi:=1$)
into~\eqref{54:1-1} to see that
$$ \frac12\,(1-v)\,w^2\Bigg|_{(x,t)=(x_1,t_1)}\le\frac{C\gamma^{4/3}}{(1-v)^{5/3}}+
\frac{C\mu^2}{{1-v}}\Bigg|_{(x,t)=(x_1,t_1)}.$$
Hence, using the maximality of~$(x_1,t_1)$ and recalling~\eqref{Jna:0},
$$ \sup_{B(x_0,R)\times [t_0-T,t_0]}w^2\le
\frac{C\gamma^{4/3}}{(1-v)^{5/3}}+
\frac{C\mu^2}{{1-v}}\Bigg|_{(x,t)=(x_1,t_1)}
\le C\gamma^{4/3}+
C\mu^2.$$
This proves~\eqref{C3456543665888885yhdffA:0} in this case.
Hence we now assume that~\eqref{CA:3-002} is satisfied. Then, in~$Q_{R,T}$,
\begin{equation}\label{8239tru75766} \begin{split}&
w\le w(x_1,t_0-T)=\frac{|\nabla v(x_1,t_0-T)|^2}{(1-v(x_1,t_0-T))^2}\\&\qquad\qquad=
\frac{|\nabla u(x_1,t_0-T)|^2}{(u(x_1,t_0-T))^2(1-v(x_1,t_0-T))^2}\le\tau_u^2,
\end{split}\end{equation}
thanks to~\eqref{sigtau}, \eqref{DE w} and~\eqref{Jna:1}.

This establishes~\eqref{C3456543665888885yhdffA:0} in this case,
and thus we now suppose that~\eqref{CA:3-003} is satisfied.
In this case, the computation in~\eqref{8239tru75766} gives that,
in~$Q_{R,T}$,
$$
w\le w(x_1,t_1)=\frac{|\nabla u(x_1,t_1)|^2}{(u(x_1,t_1))^2(1-v(x_1,t_1))^2}\le\sigma_u^2,$$
whence the proof of~\eqref{C3456543665888885yhdffA:0} is complete.
\end{proof}

\begin{lem}\label{CB:4}
In the setting of Theorem~\ref{TH2},
in~$B(x_0,R-\rho) \times [t_0-T+\delta,t_0]$ it holds
\begin{equation}\label{C3123456757374254777456dffA:0}
w\le C\,\left(
\gamma^{2/3}+\mu+ \frac{1}{\rho(R-\rho)}+
\frac{\sqrt{k_+}}{\rho}
+\frac{1}{\delta}+
\frac{1}{ \rho^2}
\right),
\end{equation}
for some~$C>0$.\end{lem}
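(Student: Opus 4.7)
The plan is to combine the spatial cut-off technique of Lemma~\ref{CB:1} with the temporal cut-off technique of Lemma~\ref{CB:2}. Let $\bar{\psi}$ be as in Lemma~\ref{PSI} and $\phi$ as in Lemma~\ref{ilLECAS2M}, both with $a := 1/2$, and set $\psi(x) := \bar{\psi}(d(x,x_0))$. Consider the joint cut-off $\Psi(x,t) := \psi(x)\phi(t)$ together with $\tilde{w} := w\Psi$. Since $\Psi$ vanishes both on $\partial B(x_0, R)\times[t_0-T,t_0]$ and on $B(x_0,R)\times\{t_0-T\}$, the maximum of $\tilde{w}$ over the closure of $Q_{R,T}$ is attained at some point $(x_1, t_1)$ with $x_1 \in B(x_0, R)$ and $t_1 \in (t_0-T, t_0]$. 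Hence at $(x_1,t_1)$ we have simultaneously $\nabla\tilde{w} = 0$, $\Delta\tilde{w} \le 0$ and $\tilde{w}_t \ge 0$, which is precisely the mechanism that will let us dispense with both $\sigma_u$ and $\tau_u$ in the final estimate.

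Apply the pivotal inequality~\eqref{GAH:00} — valid for any smooth positive function — with $\Psi$ in place of $\psi$, and evaluate at $(x_1, t_1)$. The left-hand side is non-positive, so we are left with
\begin{equation*}
\frac{(1-v)w^2\Psi}{8}\bigg|_{(x_1,t_1)} \le \left[\frac{C\gamma^{4/3}\Psi}{(1-v)^{5/3}} + \frac{C\mu^2\Psi}{1-v} - \frac{(\Delta\Psi-\Psi_t)w}{2} + \frac{w|\nabla\Psi|^2}{\Psi} + \frac{C|v|^4|\nabla\Psi|^4}{(1-v)^3\Psi^3}\right]_{(x_1,t_1)}.
\end{equation*}
Because $\nabla\Psi = \phi\,\nabla\psi$, $\Delta\Psi = \phi\,\Delta\psi$ and $\Psi_t = \psi\,\phi'$, each error term factorizes neatly into a spatial and a temporal component: the spatial factors are bounded by the estimates collected in~\eqref{2:30} exactly as in Lemma~\ref{CB:1}, and the temporal factor is controlled by~\eqref{si-a-t} exactly as in Lemma~\ref{CB:2}.

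The linear-in-$w$ terms are then absorbed into the quadratic $(1-v)w^2\Psi$ term via Young's and Cauchy--Schwarz inequalities, following the absorption steps~\eqref{Qua4d}, \eqref{23423435AYH} and~\eqref{we555fd666345678gGAH:003}. The spatial cut-off contributes bounds of the type $1/\rho^2 + 1/(\rho(R-\rho)) + \sqrt{k_+}/\rho$, while the temporal cut-off contributes $1/\delta$; the choice $a = 1/2$ ensures that every power of $\Psi$ produced is non-negative, and the $|v|^4|\nabla\Psi|^4/\Psi^3$ term is dealt with after multiplying the surviving inequality by $\Psi$ (as in the passage from~\eqref{39029735-7} to~\eqref{39029735-8}) and using $v \le 0$. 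This yields a uniform bound on $\sup_{Q_{R,T}}(w\Psi)^2$, and since $\Psi \equiv 1$ on $B(x_0, R-\rho)\times[t_0-T+\delta, t_0]$, the estimate~\eqref{C3123456757374254777456dffA:0} follows after extracting a square root. The main obstacle is purely one of bookkeeping: no new idea is required, but one must retrace both earlier arguments simultaneously and verify that the single choice $a = 1/2$ in both cut-offs makes every error term absorbable into the quadratic term without producing any singular power of $\Psi$.
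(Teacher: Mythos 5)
Your proposal is correct and follows essentially the same route as the paper: define the joint cut-off $\Phi=\psi\phi$, apply the interior maximum-point argument to $\tilde w=w\Phi$, invoke the pivotal inequality~\eqref{GAH:00}, absorb the linear-in-$w$ terms via the already-established Cauchy--Schwarz steps, and multiply by~$\Phi$ before restricting to the region where $\Phi\equiv1$. One small inaccuracy: you assert that the choice $a=1/2$ ``ensures that every power of $\Psi$ produced is non-negative,'' but with $a=1/2$ the absorbed spatial terms carry a factor $\psi^{4a-3}=\psi^{-1}$, which becomes harmless only after the multiplication by $\Phi$ turns it into $\psi^{4a-2}\phi^2=\phi^2\le1$ --- so that multiplication step is essential for \emph{all} the error terms, not only the $|v|^4$ one you single out; the paper sidesteps this by taking $a=3/4$ (making $\psi^{4a-3}=1$ bounded outright), but your choice works just as well once the multiplication is carried out uniformly.
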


\begin{proof} Let~$a\in(0,1)$ to be conveniently chosen in what follows.
Let also~$\psi$ be as in~\eqref{PSIDEFINI}
and~$\phi$ be as in
Lemma~\ref{ilLECAS2M}. We define~$\Phi(x,t):=\psi(x)\phi(t)$ and~$\tilde w:=w\Phi$.
Suppose that the maximum of $\tilde w$ in the closure of~$Q_{R,T}$ is reached at some point~$(x_1, t_1)$.
Since~$\Phi$ vanishes along the parabolic boundary, we know that~$x_1\in B(x_0,R)$
and~$t_1\in(t_0-T,t_0]$. As a consequence,
\begin{equation}\label{7yhbcampP6tg} \Delta\tilde w(x_1,t_1)\le0,\qquad
\nabla\tilde w(x_1,t_1)=0\qquad{\mbox{and}}\qquad\tilde w_t(x_1,t_1)\ge0.\end{equation}
Combining this information with~\eqref{GAH:00}, we obtain that
\begin{equation}\label{OKAiiot57}
\begin{split}
&0\ge\frac{(1-v)\,w^2 \Phi}8-\frac{C\gamma^{4/3}\,\psi}{(1-v)^{5/3}}
-\frac{C\mu^2\psi}{{1-v}}\\&\qquad+\frac{(\Delta\Phi-\Phi_t)\,w}2
-\frac{w\,|\nabla\Phi|^2}\Phi-\frac{C\,|v|^4\,|\nabla \psi|^4}{(1-v)^{3}\, \psi^{3}}\Bigg|_{(x,t)=(x_1,t_1)}.
\end{split}
\end{equation}
Now, from~\eqref{Qua4d},
\begin{equation}\label{989890-09-01}\begin{split}
-\frac{\Delta\Phi\,w}2
=
-\frac{\phi\Delta\psi\,w}2\le
\frac{(1-v)w^2\Phi}{32}+C\left[
\rho\,\left(\frac{n-1}{d}+\sqrt{(n-1)k_+}\right)+1
\right]^2\frac{\psi^{2a-1} \phi}{ (1-v)\rho^4}
,\end{split}\end{equation}
and, from~\eqref{we555fd666345678gGAH:003},
\begin{equation}  \label{989890-09-02}\frac{\Phi_t\,w}2=
\frac{\psi\phi_t\,w}2\le
\frac{(1-v)w^2\Phi}{32}+
\frac{C\phi^a\psi}{\delta^2(1-v)}.\end{equation}
We plug these items of information into~\eqref{OKAiiot57},
and we find that
\begin{align*}
&\frac1{16}(1-v)\,w^2 \Phi\Bigg|_{(x,t)=(x_1,t_1)}\\
&\le
\frac{C\gamma^{4/3}\Phi}{(1-v)^{5/3}}+\frac{C\mu^2\Phi}{{1-v}}+ C\left[
\rho\,\left(\frac{n-1}{d}+\sqrt{(n-1)k_+}\right)+1
\right]^2\frac{\psi^{2a-1} \phi}{ (1-v)\rho^4}\\
&+\frac{C\phi^a\psi}{\delta^2(1-v)}
+\frac{w\,|\nabla\Phi|^2}\Phi+\frac{C\,|v|^4\,|\nabla \psi|^4}{(1-v)^{3}\, \psi^{3}}\Bigg|_{(x,t)=(x_1,t_1)}.\end{align*}
This and~\eqref{23423435AYH} entail that
\begin{align*} &\frac1{32}(1-v)\,w^2\Phi\Bigg|_{(x,t)=(x_1,t_1)}\\
&\le
\frac{C\gamma^{4/3}\Phi}{(1-v)^{5/3}}+\frac{C\mu^2\Phi}{{1-v}}+ C\left[
\rho\,\left(\frac{n-1}{d}+\sqrt{(n-1)k_+}\right)+1
\right]^2\frac{\psi^{2a-1} \phi}{ (1-v)\rho^4}
\\
&+\frac{C\phi^a\psi}{\delta^2(1-v)}
+\frac{C\psi^{4a-3}\phi}{(1-v)\rho^4}+\frac{C\,|v|^4\,|\nabla \psi|^4}{(1-v)^{3}\, \psi^{3}}\Bigg|_{(x,t)=(x_1,t_1)}.\end{align*}
Using this and~\eqref{2:30} (and adjusting the constants)
we conclude that
\begin{align*} &(1-v)\,w^2 \Phi\Bigg|_{(x,t)=(x_1,t_1)}\\
&\le
\frac{C\gamma^{4/3}\Phi}{(1-v)^{5/3}}+\frac{C\mu^2\Phi}{{1-v}}+ C\left[
\rho\,\left(\frac{n-1}{d}+\sqrt{(n-1)k_+}\right)+1
\right]^2\frac{\psi^{2a-1} \phi}{ (1-v)\rho^4}\\
&+\frac{C\phi^a\psi}{\delta^2(1-v)}
+\frac{C\psi^{4a-3}\phi}{(1-v)\rho^4}+
\frac{C|v|^4\,\psi^{4a-3}\phi}{(1-v)^{3}\, \rho^4}
\Bigg|_{(x,t)=(x_1,t_1)}.\end{align*}
Therefore, choosing~$a:=3/4$,
we see that, for every~$x\in B(x_0,R-\rho)$ and~$t\in [t_0-T+\delta,t_0]$,
\begin{eqnarray*}
&&w^2(x,t)= w^2(x,t)\Phi^2(x,t)=\tilde w^2(x,t)\le
\tilde w^2(x_1,t_1)=
w^2(x_1,t_1)\Phi^2(x_1,t_1)\\&&
\qquad\le\frac{C\gamma^{4/3}}{(1-v)^{8/3}}
+\frac{C\mu^2}{{(1-v)^2}}+ \left[
\rho\,\left(\frac{n-1}{d}+\sqrt{(n-1)k_+}\right)+1
\right]^2\frac{C }{ (1-v)^2\,\rho^4}
\\&&\qquad\qquad+\frac{C}{\delta^2(1-v)^2}
+\frac{C}{(1-v)^2\,\rho^4}+
\frac{C|v|^4}{(1-v)^{4}\, \rho^4}\Bigg|_{(x,t)=(x_1,t_1)}
,
\end{eqnarray*}
that, recalling~~\eqref{Jna:0}, yields the desired estimate in~\eqref{C3123456757374254777456dffA:0}.
\end{proof}

Now we use the notation
\begin{equation}\label{TUTTITE}
\begin{split}
&{\widetilde{\mathscr{C}}}:=
\gamma^{2/3}+\mu,\\
&{\widetilde{\mathscr{T}}}:=\frac{1}{\delta},\\
{\mbox{and }}\quad&{\widetilde{\mathscr{S}}}:=\frac{1}{\rho^2}+\frac{1}{\rho(R-\rho)}
+\frac{\sqrt{k_+}}{\rho}.
\end{split}
\end{equation}
In this setting, the term~${\widetilde{\mathscr{C}}}$ in~\eqref{TUTTITE}
denotes a common quantity for our main estimates, while
the terms~${\widetilde{\mathscr{T}}}$ and~${\widetilde{\mathscr{S}}}$ play the role of parabolic boundary terms
in time and in space respectively. As a matter of fact, by combining Lemmata~\ref{CB:1},
\ref{CB:2}, \ref{CB:3} and~\ref{CB:4} we find that

\begin{cor}\label{E67QRqwer3456788gfnh}
In the setting of Theorem~\ref{TH2}, the function~$w$ can be
estimated by
\begin{eqnarray*}
C{\widetilde{\mathscr{C}}}+
\big(\tau_u^2+C{\widetilde{\mathscr{S}}}\big)&&
{\mbox{in~$ B(x_0,R-\rho)\times [t_0-T,t_0]$,}}\\C{\widetilde{\mathscr{C}}}+
\big(\sigma_u^2+ C{\widetilde{\mathscr{T}}}
\big)&&
{\mbox{in~$ B(x_0,R)\times [t_0-T+\delta,t_0]$,}}\\C{\widetilde{\mathscr{C}}}+
\big(\sigma_u^2+\tau_u^2
\big)&&
{\mbox{in~$B(x_0,R)\times[t_0-T,t_0]$,}}
\\C{\widetilde{\mathscr{C}}}+
C\,\big( {\widetilde{\mathscr{S}}}
+{\widetilde{\mathscr{T}}}
\big)&&{\mbox{in~$
B(x_0,R-\rho)\times [t_0-T+\delta,t_0]$.}}
\end{eqnarray*}
for some~$C>0$.
\end{cor}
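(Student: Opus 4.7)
The plan is to observe that Corollary~\ref{E67QRqwer3456788gfnh} is essentially a transcription of Lemmata~\ref{CB:1}, \ref{CB:2}, \ref{CB:3}, \ref{CB:4} in the streamlined notation introduced in~\eqref{TUTTITE}. Accordingly, the proof reduces to a bookkeeping exercise: in each of the four regions appearing in the corollary's statement, I would invoke the corresponding lemma and then match the various terms on the right-hand side of its estimate to the quantities ${\widetilde{\mathscr{C}}}$, ${\widetilde{\mathscr{T}}}$, ${\widetilde{\mathscr{S}}}$ defined in~\eqref{TUTTITE}.

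For the region $B(x_0,R-\rho)\times[t_0-T,t_0]$, I would apply Lemma~\ref{CB:1} and recognize that $\gamma^{2/3}+\mu={\widetilde{\mathscr{C}}}$, while the group $\frac{1}{\rho^2}+\frac{1}{\rho(R-\rho)}+\frac{\sqrt{k_+}}{\rho}={\widetilde{\mathscr{S}}}$; this yields the first claimed bound, after noting that $C\cdot \widetilde{\mathscr S}$ absorbs the mixed term $1/\rho^2$ as well. For the region $B(x_0,R)\times[t_0-T+\delta,t_0]$, Lemma~\ref{CB:2} produces $\sigma_u^2+C(\gamma^{2/3}+\mu+1/\delta)$, which rewrites immediately as $C{\widetilde{\mathscr{C}}} + (\sigma_u^2 + C{\widetilde{\mathscr{T}}})$. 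For the region $B(x_0,R)\times[t_0-T,t_0]$, Lemma~\ref{CB:3} gives $\sigma_u^2+\tau_u^2+C(\gamma^{2/3}+\mu)$, which is exactly $C{\widetilde{\mathscr{C}}}+(\sigma_u^2+\tau_u^2)$. Finally, for the region $B(x_0,R-\rho)\times[t_0-T+\delta,t_0]$, Lemma~\ref{CB:4} supplies a bound whose terms regroup precisely as $C{\widetilde{\mathscr{C}}}+C({\widetilde{\mathscr{S}}}+{\widetilde{\mathscr{T}}})$.

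There is no genuine obstacle here; the only mildly delicate step is ensuring that the common quantity $\gamma^{2/3}+\mu$ appearing in each of the four lemmas is consistently collected into a single constant multiple of ${\widetilde{\mathscr{C}}}$, and that the several curvature/cut-off terms in Lemma~\ref{CB:1} and Lemma~\ref{CB:4} are all dominated by $C{\widetilde{\mathscr{S}}}$ and $C{\widetilde{\mathscr{T}}}$, respectively. Both are permitted by allowing $C>0$ to be renamed from line to line, a convention already used repeatedly in the preceding lemmas, so the verification is immediate once the matching has been made explicit.
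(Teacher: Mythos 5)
Your proposal is correct and matches the paper exactly: the paper states this corollary without a separate proof, presenting it as the immediate consequence of Lemmata~\ref{CB:1}--\ref{CB:4} rewritten in the notation of~\eqref{TUTTITE}, which is precisely the bookkeeping you carry out. (One trivial quibble: the term $1/\rho^2$ is already listed in the definition of $\widetilde{\mathscr S}$, so there is nothing for $C\widetilde{\mathscr S}$ to ``absorb''; the first bound is a verbatim regrouping, just like the other three.)
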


Hence, considering the more convenient term in any common domain, we deduce
from Corollary~\ref{E67QRqwer3456788gfnh} that:

\begin{cor}
In the setting of Theorem~\ref{TH2}, at any point in~$ Q_{R,T}$, we have that
\begin{equation}\label{C31234567xccgtt55d57374254777456dffA:0}\begin{split}
&w\le
C{\widetilde{\mathscr{C}}}+
\Big[\min\left\{\sigma_u^2+\tau_u^2,\,
\sigma_u^2+ C{\widetilde{\mathscr{T}}},\,
\tau_u^2+ C{\widetilde{\mathscr{S}}},\,
C({\widetilde{\mathscr{T}}}+{\widetilde{\mathscr{S}}})\right\}
\chi_{B(x_0,R-\rho)\times [t_0-T+\delta,t_0]}\\
&\qquad\qquad+\left(\sigma_u^2 +\min\left\{\tau_u^2,\,C{\widetilde{\mathscr{T}}}\right\}\right)\chi_{(B(x_0,R)\setminus B(x_0,R-\rho))\times [t_0-T+\delta,t_0]}\\
&\qquad\qquad+
\left(\tau_u^2 +\min\left\{\sigma_u^2,\,C{\widetilde{\mathscr{S}}}\right\}\right)
\chi_{B(x_0,R-\rho)\times [t_0-T,t_0-T+\delta]}\\
&\qquad\qquad+\left(\sigma_u^2 +\tau_u^2\right)\chi_{(B(x_0,R)\setminus B(x_0,R-\rho))\times [t_0-T,t_0-T+\delta]}
\Big],\end{split}
\end{equation}
for some~$C>0$.
\end{cor}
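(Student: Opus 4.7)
The plan is to derive the pointwise bound by partitioning $Q_{R,T}$ into the four disjoint regions supporting $\mathscr{I}$, $\mathscr{B}_2$, $\mathscr{B}_1$ and $\mathscr{B}_3$ respectively, and then on each region choosing the sharpest bound among the four estimates provided by Corollary~\ref{E67QRqwer3456788gfnh}. Concretely, I would first observe that the four estimates of Corollary~\ref{E67QRqwer3456788gfnh} are valid on four (overlapping) subdomains of $Q_{R,T}$, all containing the common term $C\widetilde{\mathscr{C}}$; the task therefore reduces to selecting, on each piece of the partition in~\eqref{161}, exactly those bounds whose domain of validity contains that piece, and then taking their minimum.

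First I would handle the interior region $B(x_0,R-\rho)\times[t_0-T+\delta,t_0]$, which is contained in the domain of validity of all four lemmata (\ref{CB:1}--\ref{CB:4}). Hence on this piece one may simultaneously use the estimates $\tau_u^2+C\widetilde{\mathscr{S}}$, $\sigma_u^2+C\widetilde{\mathscr{T}}$, $\sigma_u^2+\tau_u^2$ and $C(\widetilde{\mathscr{T}}+\widetilde{\mathscr{S}})$, and taking the minimum produces the expression in the first line of~\eqref{C31234567xccgtt55d57374254777456dffA:0} (after absorbing the common $C\widetilde{\mathscr{C}}$).

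Next I would treat the three boundary pieces one by one. On $(B(x_0,R)\setminus B(x_0,R-\rho))\times[t_0-T+\delta,t_0]$, Lemma~\ref{CB:1} and Lemma~\ref{CB:4} do not apply (their domains are interior in space), so only the estimates from Lemmata~\ref{CB:2} and~\ref{CB:3} are available, giving $\sigma_u^2+\min\{\tau_u^2,C\widetilde{\mathscr{T}}\}$. Symmetrically, on $B(x_0,R-\rho)\times[t_0-T,t_0-T+\delta]$ only Lemmata~\ref{CB:1} and~\ref{CB:3} apply, giving $\tau_u^2+\min\{\sigma_u^2,C\widetilde{\mathscr{S}}\}$. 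Finally, on the corner piece $(B(x_0,R)\setminus B(x_0,R-\rho))\times[t_0-T,t_0-T+\delta]$, only Lemma~\ref{CB:3} remains applicable, yielding $\sigma_u^2+\tau_u^2$.

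There is essentially no obstacle beyond bookkeeping: the ``hard part'' was done in the four preceding lemmata and is already encoded in Corollary~\ref{E67QRqwer3456788gfnh}. Assembling the four pointwise bounds and multiplying by the indicator functions of the four pieces of the partition in~\eqref{161} produces exactly the right-hand side of~\eqref{C31234567xccgtt55d57374254777456dffA:0}, up to renaming $C$. The only subtlety is to verify that the four regions do form a partition of $Q_{R,T}$ (which is precisely the identity ${\mathscr{B}}_1+{\mathscr{B}}_2+{\mathscr{B}}_3+{\mathscr{I}}=\chi_{Q_{R,T}}$ recorded just after~\eqref{161}) so that the pointwise estimate on each piece can be patched into a single global inequality via characteristic functions.
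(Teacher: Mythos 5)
Your proposal is correct and is precisely the argument the paper has in mind: the corollary is stated as an immediate consequence of Corollary~\ref{E67QRqwer3456788gfnh} ``considering the more convenient term in any common domain,'' which is exactly the region-by-region selection of the applicable estimates and the minimum that you carry out. The only cosmetic point worth noting is that the characteristic functions in the corollary use closed time intervals while the partition in~\eqref{161} uses half-open ones, so the regions overlap at $t=t_0-T+\delta$; this is harmless since each term is an upper bound there, so the pointwise inequality still holds.
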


\begin{proof}[Completion of the proof of Theorem~\ref{TH2}]
In light of~\eqref{DE w} and~\eqref{Jna:1},
$$ w=\frac{|\nabla v|^2}{(1-v)^{2}}=\frac{|\nabla u|^2}{u^2(1-v)^{2}}.
$$
This and~\eqref{C31234567xccgtt55d57374254777456dffA:0} give that
\begin{equation*}\begin{split}
&
\frac{|\nabla u|^2}{u^2(1-v)^{2}}\le
C{\widetilde{\mathscr{C}}}+
\Big[\min\left\{\sigma_u^2+\tau_u^2,\,
\sigma_u^2+ C{\widetilde{\mathscr{T}}},\,
\tau_u^2+ C{\widetilde{\mathscr{S}}},\,
C({\widetilde{\mathscr{T}}}+{\widetilde{\mathscr{S}}})\right\}
\chi_{B(x_0,R-\rho)\times [t_0-T+\delta,t_0]}\\
&\qquad\qquad+\left(\sigma_u^2 +\min\left\{\tau_u^2,\,C{\widetilde{\mathscr{T}}}\right\}\right)\chi_{(B(x_0,R)\setminus B(x_0,R-\rho))\times [t_0-T+\delta,t_0]}\\
&\qquad\qquad+
\left(\tau_u^2 +\min\left\{\sigma_u^2,\,C{\widetilde{\mathscr{S}}}\right\}\right)
\chi_{B(x_0,R-\rho)\times [t_0-T,t_0-T+\delta]}\\
&\qquad\qquad+\left(\sigma_u^2 +\tau_u^2\right)\chi_{(B(x_0,R)\setminus B(x_0,R-\rho))\times [t_0-T,t_0-T+\delta]}
\Big].\end{split}
\end{equation*}
Taking the square root and recalling~\eqref{TUTTITE-0}, we obtain~\eqref{B}, as desired.
\end{proof}

\section{Applications of Theorem~\ref{TH2}}\label{KAseecdt:3}
\setcounter{equation}{0}
\vskip2mm
\noindent
In this part, we will show several applications of Theorem~\ref{TH2} also
by comparing our general approach with some of the existing
results in specific situations.

\subsection{The heat equation in~$\R^n$}

As a special case,
one obtains from Theorem~\ref{TH2}
a global estimate for the gradient of solutions of the
heat equation in Euclidean balls. We state this byproduct
of Theorem~\ref{TH2} in detail for the sake of clarity:

\begin{cor}\label{CO}
Let~$B(x_0,R)\subset\R^n$ be the $n$-dimensional
Euclidean ball.
Let~$M>0$, $t_0\in\R$ and~$T>0$.
Let also~$Q_{R,T}:=B(x_0,R)\times[t_0-T,t_0]$ and suppose that~$u:Q_{R,T}\to
(0,M]$ is
a solution of
\begin{equation*}
u_t=\Delta u\qquad{\mbox{ in }}Q_{R,T}.
\end{equation*}
Then, for any $\delta\in(0,T)$ and $\rho\in(0,R)$,
there exists $C>0$, only depending on~$n$, such that
\begin{equation}\label{J34}
\aligned
&~~~~\frac{|\nabla u(x,t)|}{u(x,t)} \leq
\mathscr{Z}^{(0)}(x,t)\;
\left(1+\ln \frac{M}{u(x,t)}\right)\qquad{\mbox{ for all }}
(x,t)\in Q_{R,T},
\endaligned
\end{equation}
where
\begin{eqnarray*}
\mathscr{Z}^{(0)}:=
\beta_1^{(0)}\,{\mathscr{B}}_1
+\beta_2\,{\mathscr{B}}_2
+\beta_3\,{\mathscr{B}}_3
+\iota^{(0)}\,{\mathscr{I}},
\end{eqnarray*}
with
\begin{equation*}
\begin{split}
&\beta_1^{(0)}:=
\tau_u +\min\left\{\sigma_u,\,C{{\mathscr{S}}}^{(0)}\right\}\\{\mbox{and }}\quad
&\iota^{(0)}:=
\min\left\{\sigma_u+\tau_u,\,
\sigma_u+C{{\mathscr{T}} },\,
\tau_u+C{{\mathscr{S}}}^{(0)} ,\,
C({{\mathscr{T}}}+{{\mathscr{S}}}^{(0)})\right\}
,\end{split}
\end{equation*}
being
$$ {\mathscr{S}}^{(0)}:=\frac{1}{\rho}+\frac{1}{\sqrt{\rho(R-\rho)}}.$$
We recall that~${\mathscr{T}}$ is as in~\eqref{TUTTITE-0},
$\beta_2$ and $\beta_3$ are as in~\eqref{162}, $\tau_u$
and~$\sigma_u$ are as in~\eqref{sigtau}, and~${\mathscr{B}}_1$, ${\mathscr{B}}_2$, ${\mathscr{B}}_3$ and~${\mathscr{I}}$
are as in~\eqref{161}.
\end{cor}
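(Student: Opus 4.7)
The plan is to deduce Corollary~\ref{CO} as a direct specialization of Theorem~\ref{TH2}. First, I would observe that the ambient manifold $\mathscr{M}=\R^n$ has identically vanishing Ricci curvature, so the lower bound~\eqref{ricci k} is satisfied with $k=0$, and consequently the ``positive part'' $k_+$ equals $0$ as well. Since the hypothesis $u\in(0,M]$ is assumed, the setting of Theorem~\ref{TH2} is in force.

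Next, I would examine the source-dependent constants for $S\equiv 0$. By definition~\eqref{GAMMA}, $\nabla S\equiv 0$ forces $\gamma=0$, and inspecting~\eqref{LA:23} with $k=0$ and $S\equiv 0$ shows that every summand inside the positive part vanishes, whence $\mu=0$. Substituting these values into the building blocks~\eqref{TUTTITE-0} gives $\mathscr{C}=\gamma^{1/3}+\sqrt{\mu}=0$, leaves $\mathscr{T}=1/\sqrt{\delta}$ untouched, and collapses the third summand of $\mathscr{S}$ (which is $\sqrt[4]{k_+}/\sqrt{\rho}$), yielding precisely $\mathscr{S}=\mathscr{S}^{(0)}=\tfrac{1}{\rho}+\tfrac{1}{\sqrt{\rho(R-\rho)}}$.

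With these simplifications at hand, I would propagate them through the coupling coefficients in~\eqref{162}. The terms $\beta_2$ and $\beta_3$ do not involve $\mathscr{S}$ and therefore coincide with their counterparts in the statement; the terms $\beta_1$ and $\iota$, on the other hand, involve $\mathscr{S}$ and thus reduce exactly to the $\beta_1^{(0)}$ and $\iota^{(0)}$ of the corollary. Consequently, the function $\mathscr{Z}$ in~\eqref{DEFdiZ} becomes $\mathscr{Z}^{(0)}$. Applying Theorem~\ref{TH2} and noting that the contribution $C\mathscr{C}$ in~\eqref{B} vanishes, we recover~\eqref{J34} verbatim.

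The step that requires the most care, rather than any deep obstacle, is the bookkeeping verification that each of the four cut-off regions contributes the claimed coefficient; this is immediate from the definitions once $\gamma$, $\mu$, and $k_+$ are set to zero, because $\mathscr{S}^{(0)}$ is exactly $\mathscr{S}$ stripped of its curvature-dependent piece. No analytic obstruction arises since the Bochner and Maximum-Principle machinery has already been absorbed into Theorem~\ref{TH2}.
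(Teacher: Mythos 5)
Your proof is correct and takes essentially the same route as the paper: both arguments consist of substituting $k=0$ and $S\equiv 0$ into Theorem~\ref{TH2}, noting that $\gamma=\mu=0$ and hence $\mathscr{C}=0$ and $\mathscr{S}=\mathscr{S}^{(0)}$, and reading off the resulting simplification of $\mathscr{Z}$. The paper's version merely states this in one line, while you spell out the bookkeeping; no new idea or gap is involved.
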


\begin{proof} Corollary~\ref{CO} follows
directly from
Theorem~\ref{TH2}
by recalling~\eqref{161} and~\eqref{162}, since here~$k=0$ and $S(x,t,u)=0$.\end{proof}

We remark that the logarithmic function in \eqref{J34}
arises naturally in the context of heat equation: for instance,
one can consider the Gau{\ss} Kernel
$$ u_G(x,t):={\frac {1}{(4\pi t)^{n/2}}}e^{-|x|^{2}/4t}$$
and observe that, if~$x\in B\big((1,0,\dots,0),1/2\big)$
and~$t\in(1,2)$,
\begin{eqnarray*}
\frac{|\nabla u_G(x,t)|}{u_G(x,t)}=
\frac{|x|}{2t}=
-\frac{2}{|x|}\ln\left((4\pi t)^{n/2}\,u_G(x,t)\right)\simeq
\ln\frac{1}{u_G(x,t)}.
\end{eqnarray*}

\subsection{The heat equation on manifolds}\label{ECBD-90SINSCIUSTU}

A small variation of Corollary~\ref{CO}
provides the following result:

\begin{cor}\label{PIVOCO}
Let~$\mathscr{M}$ be a Riemannian manifold of dimension $n\geq2$,
with Ricci curvature bounded from below by some $k\in\mathbb{R}$.
Let~$B(x_0,R)$ be a geodesic ball in~$\mathscr{M}$
and~$Q_{R,T}:=B(x_0,R)\times[t_0-T,t_0]$.
Let~$M>0$, $t_0\in\R$ and~$T>0$.
Let~$u:Q_{R,T}\to
(0,M]$ be
a solution of
\begin{equation*}
u_t=\Delta u\qquad{\mbox{ in }}Q_{R,T}.
\end{equation*}
Then, for any $\delta\in(0,T)$ and $\rho\in(0,R)$,
there exists $C>0$, only depending on~$n$, such that
\begin{equation}\label{rFqchuevanedla}
\frac{|\nabla u(x,t)|}{u(x,t)} \leq \left(
C\,\sqrt{k_+}
+
\mathscr{Z}(x,t)\right)\;
\left(1+\ln \frac{M}{u(x,t)}\right)\qquad{\mbox{ for all }}
(x,t)\in Q_{R,T},
\end{equation}
where~$\mathscr{Z}$ is as in~\eqref{DEFdiZ}.
\end{cor}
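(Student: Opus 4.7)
The plan is to derive Corollary~\ref{PIVOCO} as a direct specialization of Theorem~\ref{TH2} to the case of the pure heat equation, in which the source term vanishes identically. Since the only difference between the two statements is the explicit appearance of $\sqrt{k_+}$ in place of the abstract constant $\mathscr{C}$, the whole proof reduces to computing the nonlinearity-dependent quantities $\gamma$ and $\mu$ in this special setting and then reading off the conclusion from the main estimate~\eqref{B}.

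First I would check that the hypotheses of Theorem~\ref{TH2} are met: the bound~\eqref{BOUND} is assumed, and the source $S\equiv 0$ is trivially $C^1$ in $x$ and $u$ and continuous in $t$. Next I would specialize the two key constants. From~\eqref{GAMMA}, since $\nabla S\equiv 0$ identically on $Q_{R,T}\times(0,M]$, one obtains $\gamma=0$. From~\eqref{LA:23}, the three summands involving $S$, namely $\partial_u S$, $S/u$ and $S/(u(1-v))$, all vanish, so $\mu=\sup_{Q_{R,T}}(k)_+=k_+$. Feeding these values into the first line of~\eqref{TUTTITE-0} yields
\[
\mathscr{C}=\gamma^{1/3}+\sqrt{\mu}=\sqrt{k_+}.
\]
The quantities $\mathscr{T}$, $\mathscr{S}$, and consequently $\beta_1,\beta_2,\beta_3,\iota$ and $\mathscr{Z}$, retain their general definitions from~\eqref{TUTTITE-0}, \eqref{162} and~\eqref{DEFdiZ}, since these depend only on the manifold geometry, the cut-off parameters and the boundary quantities $\tau_u,\sigma_u$.

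Finally I would substitute $C\mathscr{C}=C\sqrt{k_+}$ into the chief estimate~\eqref{B} of Theorem~\ref{TH2} to produce exactly~\eqref{rFqchuevanedla}. I do not foresee any genuine obstacle: the whole argument is essentially a bookkeeping exercise, and the only conceptual point worth emphasizing is that the curvature lower bound survives in the final estimate solely through the $\sqrt{k_+}$ term, which is the footprint of $\mu$ after the square root in the definition of $\mathscr{C}$.
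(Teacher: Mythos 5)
Your proof is correct and coincides with the paper's own argument: both specialize Theorem~\ref{TH2} by taking $S\equiv 0$, observing that $\gamma=0$ and $\mu=k_+$, hence $\mathscr{C}=\sqrt{k_+}$, and then reading off~\eqref{rFqchuevanedla} directly from~\eqref{B}.
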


\begin{proof}
We can exploit Theorem~\ref{TH2}
by suitably modifying~\eqref{161} and~\eqref{162}, since here $S(x,t,u)=0$,
and accordingly~$\mu=k_{+}$ and
${\mathscr{C}}=\sqrt{k_{+}}$.
\end{proof}

As particular cases of Corollary~\ref{PIVOCO}, one can re-obtain
several classical local estimates for the heat equation. We provide
one classical application to
show the comprehensive nature of the results provided in this paper.

\begin{theorem*}[Theorem 1.1 in~\cite{3SZ}] Let~$\mathscr{M}$ be a Riemannian manifold of dimension~$n\ge2$
with Ricci curvature bounded from below by~$-k$, for some~$k\ge 0$. Suppose that $u$ is any positive solution
to the heat equation in $Q_{R,T}:= B(x_0,R) \times[t_0-T,t_0]$. Suppose also that~$u\le M$ in~$ Q_{R,T}$. Then there exists a dimensional constant $C$ such that
\begin{equation}\label{KAn873:023838576}\frac{|\nabla u(x,t)|}{u(x,t)}\le C\,\left(\frac1R+\frac1{\sqrt{T}}+\sqrt{k}\right)\left(1+\ln\frac{M}{u(x,t)}\right)\end{equation}
for each~$(x,t)\in Q_{R/2,T/2}$.
\end{theorem*}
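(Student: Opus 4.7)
The plan is to derive the Souplet--Zhang estimate \eqref{KAn873:023838576} as a direct specialization of Corollary~\ref{PIVOCO}, by choosing the localization parameters so that the inner cylinder $Q_{R/2,T/2}$ lies strictly in the ``interior region'' where the cheapest of the four terms composing $\mathscr{Z}$ is active, and then discarding the boundary data contributions $\tau_u,\sigma_u$ in favor of the universal smoothing bound.

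First, I would apply Corollary~\ref{PIVOCO} with the specific choices $\rho:=R/2$ and $\delta:=T/2$. With these parameters, we have $B(x_0,R-\rho)=B(x_0,R/2)$ and $[t_0-T+\delta,t_0]=[t_0-T/2,t_0]$, so that $Q_{R/2,T/2}$ coincides with the region where the cut-off function $\mathscr{I}$ from~\eqref{161} equals $1$ while $\mathscr{B}_1,\mathscr{B}_2,\mathscr{B}_3$ vanish. Consequently, at every $(x,t)\in Q_{R/2,T/2}$ the function $\mathscr{Z}$ defined in~\eqref{DEFdiZ} reduces to $\iota$, and in particular
\begin{equation*}
\mathscr{Z}(x,t)\le \iota \le C\bigl({\mathscr{T}}+{\mathscr{S}}\bigr),
\end{equation*}
where the last inequality follows by selecting the fourth entry in the minimum defining $\iota$ in~\eqref{162}. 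This step gets rid of the (possibly undefined or unknown) boundary quantities $\tau_u$ and $\sigma_u$.

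Next I would evaluate ${\mathscr{T}}$ and ${\mathscr{S}}$ from~\eqref{TUTTITE-0} under our choices. Directly, ${\mathscr{T}}=1/\sqrt{\delta}=\sqrt{2}/\sqrt{T}\le C/\sqrt{T}$, while
\begin{equation*}
{\mathscr{S}}=\frac{2}{R}+\frac{2}{R}+\frac{\sqrt[4]{k_+}}{\sqrt{R/2}}
\le\frac{C}{R}+\frac{C\,\sqrt[4]{k_+}}{\sqrt{R}}.
\end{equation*}
The only mildly subtle point is the mixed term $\sqrt[4]{k_+}/\sqrt{R}$, which I would absorb via the elementary Young-type inequality $ab\le \tfrac12 a^2+\tfrac12 b^2$ applied to $a=k_+^{1/4}$ and $b=R^{-1/2}$, yielding $\sqrt[4]{k_+}/\sqrt{R}\le\tfrac12\sqrt{k_+}+\tfrac{1}{2R}$. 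Combining, ${\mathscr{S}}\le C\bigl(R^{-1}+\sqrt{k_+}\bigr)$.

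Putting everything together, at every $(x,t)\in Q_{R/2,T/2}$ one has $C\sqrt{k_+}+\mathscr{Z}(x,t)\le C\bigl(R^{-1}+T^{-1/2}+\sqrt{k_+}\bigr)$ with a new dimensional constant $C$. Plugging this into~\eqref{rFqchuevanedla} yields \eqref{KAn873:023838576}, since $\sqrt{k_+}=\sqrt{k}$ when $k\ge 0$. The entire argument is more bookkeeping than analysis; the ``hard part'', inasmuch as there is one, is simply verifying that with $\rho=R/2$ and $\delta=T/2$ the inner cylinder $Q_{R/2,T/2}$ is contained in the support of $\mathscr{I}$ and not in that of any $\mathscr{B}_j$, which is what allows the purely universal choice in the minimum defining $\iota$ and therefore the elimination of $\tau_u$ and $\sigma_u$ from the final bound.
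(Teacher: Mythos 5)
Your proposal is correct and follows essentially the same route as the paper: specialize Corollary~\ref{PIVOCO} with $\rho=R/2$, $\delta=T/2$, observe that on $Q_{R/2,T/2}$ only $\mathscr{I}$ is active so $\mathscr{Z}=\iota$, pick the universal entry $C(\mathscr{T}+\mathscr{S})$ from the minimum defining $\iota$, and absorb $\sqrt[4]{k_+}/\sqrt{R}$ into $\sqrt{k}+1/R$ via the elementary Young inequality. The bookkeeping matches the paper's computation step for step, so there is nothing to add.
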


\begin{proof} We exploit Corollary~\ref{PIVOCO}
with~$\rho:=R/2$
and~$\delta:=T/2$. In this way, in view of~\eqref{161}
\begin{equation}\label{DENT9234}
\begin{split}&Q_{R/2,T/2}= B(x_0,R/2)\times [t_{0}-T/2,t_{0}] =B(x_0,R-\rho)\times[t_0-T+\delta,t_0]
\\&\qquad\subseteq
\{ {\mathscr{B}}_1=
{\mathscr{B}}_2
={\mathscr{B}}_3=0\}.\end{split}\end{equation}
Then, from~\eqref{sigtau}
and~\eqref{rFqchuevanedla},
for all~$(x,t)\in Q_{R/2,T/2}$,
\begin{equation}\label{J:JGAS:janzJA:jasw8e}
\begin{split}\frac{|\nabla u(x,t)|}{u(x,t)}\,& \leq \left(
C\,\sqrt{k}
+
\mathscr{Z}(x,t)\right)\;
\left(1+\ln \frac{M}{u(x,t)}\right)
\\&\le
\left(
C\,\sqrt{k}
+
\iota\right)\;
\left(1+\ln \frac{M}{u(x,t)}\right)
\\&=
\left(
C\,\sqrt{k}
+
\min\left\{\sigma_u+\tau_u,\,
\sigma_u+C{{\mathscr{T}} },\,
\tau_u+C{{\mathscr{S}}} ,\,
C({{\mathscr{T}}}+{{\mathscr{S}}})\right\}
\right)\\&\qquad\quad\cdot
\left(1+\ln \frac{M}{u(x,t)}\right).
\end{split}
\end{equation}
In particular,
by~\eqref{TUTTITE-0},
\begin{equation*}
\frac{|\nabla u|}{u}\le C\,
\left(\sqrt{k}
+{{\mathscr{T}}}+{{\mathscr{S}}}\right)
\left(1+\ln \frac{M}{u}\right)\le
C\,\left(\sqrt{k}+
\frac{1}{\sqrt{T}}+\frac{1}{R}+
\frac{\sqrt[4]{k}}{\sqrt{R}}\right)\left(1+\ln \frac{M}{u}\right).
\end{equation*}
Accordingly, since
$$ \frac{2\sqrt[4]{k}}{\sqrt{R}}
\le\sqrt{k}+
\frac{1}{{R}}
,$$
we see that
\begin{equation*}
\frac{|\nabla u|}{u}\le
C\,\left(
\sqrt{k}+
\frac{1}{\sqrt{T}}+
\frac{1}{R}\right)\left(1+\ln \frac{M}{u}\right),
\end{equation*}
which implies~\eqref{KAn873:023838576},
as desired.
\end{proof}

It is interesting to remark that the classical dependence on the
Ricci curvature in~\eqref{KAn873:023838576} is optimal,
as shown by the following explicit example. 
Let~$\lambda>0$ and~$D_\lambda$ be the unit disk in the plane with the Poincar\'e metrics
\begin{equation}\label{METRIX-1}
g_{ij}=\frac{4\lambda^2\,\delta_{ij}}{
(1-|x|^{2})^{2}}.\end{equation} We recall that in this setting the notion of harmonicity is conformally invariant, hence\footnote{For completeness we will provide a proof of~\eqref{METRIX-2}
in the appendix.}
\begin{equation}\label{METRIX-2}\begin{split}&
{\mbox{each harmonic function
in the Euclidean unit disk}}\\&{\mbox{is also harmonic in the hyperbolic metric of this Poincar\'e disk,}}\end{split}\end{equation} which in turn has negative
Ricci curvature of
order~$-\frac1{\lambda^2}$. Thus, we take~$u(x,t):=x_1+2$, we observe that
the supremum of~$u$ in the disk is equal to~$3$ and the infimum to~$1$,
and that~$u$ is a Euclidean, hence hyperbolic, harmonic function.
In particular, $u$ is a global solution of the heat equation
(hence we can consider the
case~$|x|\to+1$, which means~$R\to+\infty$ in the hyperbolic disk, and~$T\to+\infty$ in~$Q_{R,T}$).
Now, recalling (see page~20 in~\cite{MR1480173}) that~$\nabla u=g^{ij} u_i\partial_j$, we find that, in~$\{|x|<1/2\}$,
$$ |\nabla u|^2=g_{kj}g^{ij}g^{mk} u_i u_m=g^{mi} u_i u_m
=\frac{(1-|x|^2)^2}{4\lambda^2}u_i u_i=
\frac{(1-|x|^2)^2}{4\lambda^2}\ge\frac{9}{64\lambda^2}.$$
As a result, in~$\{|x|<1/2\}$,
$$ \frac{|\nabla u|}{u}\ge\frac{1}{8\lambda},$$and the latter quantity is of the order of the square root of minus the
Ricci curvature of~$D_\lambda$, thus
showing the optimality of~\eqref{KAn873:023838576}
with respect to the Ricci curvature (even in the
case of harmonic functions, i.e. stationary solutions of the heat equation).\medskip

One of the remarkable aspects of the estimates
in~\cite{3SZ} is their ``universality'' with respect to the parabolic boundary
data. On the other hand, the approach that we proposed with
cut-off functions improve the classical estimates when the parabolic
boundary data are ``exceptionally good''.
For instance, suppose that the function~$\frac{|\nabla u|}{u(1-v)}$
is controlled by a small~$\epsilon$ along the parabolic boundary
(hence, in view of~\eqref{sigtau}, both~$\tau_u$
and~$\sigma_u$ are bounded by~$\epsilon$): then one can deduce
from~\eqref{J:JGAS:janzJA:jasw8e} that
\begin{equation}\label{LAME} \frac{|\nabla u(x,t)|}{u(x,t)}\leq
C\left(
\sqrt{k}
+\epsilon\right)
\left(1+\ln \frac{M}{u(x,t)}\right),\end{equation}
which is an improvement of~\eqref{KAn873:023838576}
when~$\epsilon$ is small enough.
Interestingly, this improvement also occurs in the Euclidean case,
for the classical heat equation, in which~$k=0$.

Let us emphasize the fact that cases of this type take place even in very simple and explicit examples:
for instance, given~$\epsilon\in(0,1)$, one can consider the Euclidean case in which
$$ u(x,t)=10+\epsilon\,e^{x_1+t}.$$
In this situation, we have that, if~$|x|<1$ and~$t\in[0,1]$,
\begin{eqnarray*}&&
\Delta u=u_{11}=\epsilon\,e^{x_1+t}=u_t,\\
&&u\le 10+e^2\epsilon\le10+e^2< 19,\\
&&u\ge 10-e^2\epsilon\ge 10-e^2>1,\\
&&|\nabla u|=\epsilon\,e^{x_1+t}\le e^2\epsilon,\\
&&1-v=1+\ln\frac{M}u\in\left[ 1, 1+\ln19\right].
\end{eqnarray*}
Notice in particular that~$u$ solves the heat equation in~$Q_{1,1}$ and the classical estimate in~\eqref{KAn873:023838576}
(used here with~$k=0$, $R=t_0=T=1$) entails that
$$ \frac{|\nabla u|}{u}\le C\qquad{\mbox{in }}Q_{1/2,1/2}.$$
Instead, one can deduce from~\eqref{LAME} that
$$ \frac{|\nabla u|}{u}\leq
C\epsilon\qquad{\mbox{in }}Q_{1/2,1/2},$$
thus clarifying how the techniques developed in this paper
lead to an enhancement of the classical estimates
even when we are interested only in interior estimates, since they are capable of ``trading universality
for boundary information'' (namely, they can possibly take additional advantage of nice
boundary data whenever this information can lead to improved estimates with respect to the classical, universal ones).

Interestingly, this improvement also highlights
the persisting effect of ``exceptionally good'' parabolic boundary data
on the interior behavior of the solutions, and
in general it captures the boundary behavior of the solutions.

\subsection{General nonlinearities}

In this part, we will focus on the nonlinear
parabolic equation, by comparing our results with the previous literature.
A first improvement of our general result in Theorem~\ref{TH2}
is that it captures the global (and not only the local behavior)
of the solution.
Furthermore, the result in Theorem~\ref{TH2}
deals with several nonlinearities ``at the same time''
and from it we can re-obtain easily, as particular cases, a number of different
results that are scattered in several works of the existing literature.\medskip

As a matter of fact, Theorem~\ref{TH2} here
is new even when considered as an interior estimate.
We state this particular case explicitly as follows:

\begin{cor}\label{SOED:spe1}
Suppose that $u$ is a solution of equation~\eqref{20}
satisfying~\eqref{BOUND}. Then,
there exists~$C>0$, only depending on~$n$, such that
\begin{equation*}
\frac{|\nabla u(x,t)|}{u(x,t)} \leq \Big(C
\mathscr{C}+
\iota_\star\Big)\;
\left(1+\ln \frac{M}{u(x,t)}\right)\qquad{\mbox{ for all }}
(x,t)\in Q_{R/2,T/2}.
\end{equation*}
Here, we used the notation in~\eqref{TUTTITE-0} for~${\mathscr{C}}$,
and
\begin{equation}\label{BGblaolvlwebla1q2} \iota_\star:=
\min\left\{\sigma_u+\tau_u,\,
\sigma_u+C{{\mathscr{T}} }_\star,\,
\tau_u+C{{\mathscr{S}}}_\star ,\,
C({{\mathscr{T}}}_\star+{{\mathscr{S}}}_\star)\right\},\end{equation}
where
\begin{equation}\label{BGblaolvlwebla1q}
{\mathscr{T}}_\star:=\frac{1}{\sqrt{T}}\qquad{\mbox{ and }}\qquad
{\mathscr{S}}_\star:=\frac{1}{R}
+\frac{\sqrt[4]{k_+}}{\sqrt{R}}.
\end{equation}
\end{cor}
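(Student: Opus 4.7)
The plan is simply to apply Theorem~\ref{TH2} with the specific choice $\rho:=R/2$ and $\delta:=T/2$, and then verify that every point of $Q_{R/2,T/2}$ belongs to the ``interior'' region picked out by the cut-off function $\mathscr{I}$, so that the complicated expression $\mathscr{Z}$ in~\eqref{DEFdiZ} collapses to the single quantity $\iota$.

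First I would observe that, with $\rho=R/2$ and $\delta=T/2$, one has $R-\rho=R/2$ and $t_0-T+\delta=t_0-T/2$, so that
\[
Q_{R/2,T/2}=B(x_0,R/2)\times[t_0-T/2,t_0]=B(x_0,R-\rho)\times[t_0-T+\delta,t_0].
\]
Looking at the definitions in~\eqref{161}, this is exactly the support of $\mathscr{I}$, and on it $\mathscr{B}_1=\mathscr{B}_2=\mathscr{B}_3=0$ while $\mathscr{I}=1$. Hence, from~\eqref{DEFdiZ}, $\mathscr{Z}(x,t)=\iota$ for all $(x,t)\in Q_{R/2,T/2}$, and Theorem~\ref{TH2} yields
\[
\frac{|\nabla u(x,t)|}{u(x,t)}\le\bigl(C\mathscr{C}+\iota\bigr)\left(1+\ln\frac{M}{u(x,t)}\right)\quad\text{for all }(x,t)\in Q_{R/2,T/2}.
\]

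The remaining task is to compare $\iota$, built from $\mathscr{T}$ and $\mathscr{S}$ as in~\eqref{TUTTITE-0} evaluated at $\rho=R/2$, $\delta=T/2$, with $\iota_\star$ from~\eqref{BGblaolvlwebla1q2}. This is a purely arithmetic matching: under these choices, $\mathscr{T}=\sqrt{2/T}\le\sqrt{2}\,\mathscr{T}_\star$, and
\[
\mathscr{S}=\frac{2}{R}+\frac{2}{R}+\frac{\sqrt{2}\,\sqrt[4]{k_+}}{\sqrt{R}}\le C'\mathscr{S}_\star,
\]
for a suitable dimensional constant. Substituting these into the four terms of $\iota$ and absorbing the constants into the~$C$ inside the $\min$, we obtain $\iota\le\iota_\star$ (possibly after renaming the constant~$C$ in~\eqref{BGblaolvlwebla1q2} to a slightly larger one). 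This step is straightforward bookkeeping; the only point to be careful about is to ensure that the same constant $C$ in front of $\mathscr{T}$ and $\mathscr{S}$ in each of the four entries of the $\min$ suffices — which it does, since there are only finitely many entries and we can take the maximum of the constants coming from the comparisons.

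I do not expect any genuine obstacle in this proof: once the geometric observation $Q_{R/2,T/2}\subseteq\{\mathscr{I}=1,\,\mathscr{B}_i=0\}$ is in place, everything is a direct specialization of Theorem~\ref{TH2} together with the trivial estimates on $\mathscr{T}$ and $\mathscr{S}$ induced by the particular choice of $\rho$ and $\delta$.
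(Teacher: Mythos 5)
Your proposal is correct and follows essentially the same route as the paper's proof: choose $\rho=R/2$, $\delta=T/2$, observe that $Q_{R/2,T/2}=B(x_0,R-\rho)\times[t_0-T+\delta,t_0]$ lies entirely in the support of $\mathscr{I}$ so that $\mathscr{Z}$ reduces to $\iota$, and then absorb the factors coming from $\mathscr{T}\le\sqrt{2}\,\mathscr{T}_\star$ and $\mathscr{S}\le C\mathscr{S}_\star$ into the constant. (If anything, your bookkeeping on $\mathscr{S}$ is a touch more careful than the paper's, which asserts $\mathscr{S}\le\mathscr{S}_\star$ without the constant that your computation $\mathscr{S}=\frac{4}{R}+\frac{\sqrt{2}\sqrt[4]{k_+}}{\sqrt{R}}$ shows is actually needed.)
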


\begin{proof} The claim follows from~\eqref{B},
\eqref{162}
and~\eqref{DENT9234}. Notice in particular that,
in this setting, we have~$\rho:=R/2$ and~$\delta:=T/2$,
which give in~\eqref{TUTTITE-0} that~${\mathscr{T}}\le C{\mathscr{T}}_\star$
and~${\mathscr{S}}
\le {\mathscr{S}}_\star$.
\end{proof}

As a special case of Corollary~\ref{SOED:spe1},
one obtains the following new uniform interior estimate:

\begin{cor}\label{ILCOPEIR89}
Suppose that $u$ is a solution of equation~\eqref{20}
satisfying~\eqref{BOUND}. Then,
there exists~$C>0$, only depending on~$n$, such that
\begin{equation*}
\frac{|\nabla u(x,t)|}{u(x,t)} \leq C\Big(
\mathscr{C}
+
{{\mathscr{T}}}_\star+{{\mathscr{S}}}_\star\Big)\;
\left(1+\ln \frac{M}{u(x,t)}\right)\qquad{\mbox{ for all }}
(x,t)\in Q_{R/2,T/2}.
\end{equation*}
Here, we used the notation in~\eqref{TUTTITE-0} for~${\mathscr{C}}$,
and the one in~\eqref{BGblaolvlwebla1q} for
${\mathscr{T}}_\star$ and~${\mathscr{S}}_\star$.
\end{cor}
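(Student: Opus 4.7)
The plan is to derive Corollary~\ref{ILCOPEIR89} as an immediate consequence of Corollary~\ref{SOED:spe1}, by simply discarding the boundary-data terms $\sigma_u$ and $\tau_u$ in the minimum defining $\iota_\star$ and keeping only the ``universal'' option produced by the heat equation's smoothing effect.

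More precisely, inspecting the definition of $\iota_\star$ in~\eqref{BGblaolvlwebla1q2}, we see that $\iota_\star$ is the minimum of four nonnegative quantities, one of which is $C(\mathscr{T}_\star+\mathscr{S}_\star)$. Hence, trivially,
\[
\iota_\star \,\le\, C\bigl(\mathscr{T}_\star+\mathscr{S}_\star\bigr).
\]
Substituting this inequality into the estimate provided by Corollary~\ref{SOED:spe1}, namely
\[
\frac{|\nabla u(x,t)|}{u(x,t)} \leq \bigl(C\mathscr{C}+\iota_\star\bigr)\left(1+\ln \frac{M}{u(x,t)}\right)\qquad\text{for all }(x,t)\in Q_{R/2,T/2},
\]
and absorbing constants, one obtains the desired bound
\[
\frac{|\nabla u(x,t)|}{u(x,t)} \leq C\bigl(\mathscr{C}+\mathscr{T}_\star+\mathscr{S}_\star\bigr)\left(1+\ln \frac{M}{u(x,t)}\right)
\]
on $Q_{R/2,T/2}$.

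There is really no obstacle here: the only ``work'' is recognising that the universal-smoothing branch of the minimum in~\eqref{BGblaolvlwebla1q2} is the one that gives a boundary-data-free estimate, which is precisely the content of Corollary~\ref{ILCOPEIR89}. The emphasis of this corollary is conceptual rather than technical: it isolates the classical ``interior'' part of Theorem~\ref{TH2}, in which the parabolic boundary data do not appear explicitly and one recovers a purely universal bound of Souplet--Zhang type.
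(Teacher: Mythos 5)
Your proof is correct and coincides with the paper's own argument: both observe that the minimum defining $\iota_\star$ in~\eqref{BGblaolvlwebla1q2} is bounded above by its branch $C(\mathscr{T}_\star+\mathscr{S}_\star)$ and then invoke Corollary~\ref{SOED:spe1}. No difference in approach.
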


\begin{proof} By~\eqref{BGblaolvlwebla1q2},
we know in particular that~$\iota_\star\le
C({{\mathscr{T}}}_\star+{{\mathscr{S}}}_\star)$, hence the desired result follows
from Corollary~\ref{SOED:spe1}.
\end{proof}

In this way, we re-obtain many results
in the literature as particular cases. We list a few of them for
the sake of completeness. We start with a result
related to the thin film equation.

\begin{theorem*}[Theorem 1.1 in \cite{16MZ}] Let~$\alpha$, $\lambda\in\R$.
Let~$\mathscr{M}$
be a Riemannian manifold of dimension~$n\ge2$
with Ricci curvature bounded from below by~$-k$,
where $k$ is a non-negative constant.
Suppose that $u$
is a positive solution to $$u_t=\Delta u+\lambda u^\alpha$$
in~$ Q_{R,T}:=B(x_0,R) \times[0, T]$.
Let~$M:=\displaystyle\sup_{Q_{R,T}}u$
and~$m:=\displaystyle\inf_{Q_{R,T}}u$.

Then\footnote{We think that
there are some typos in Theorem 1.1 in \cite{16MZ},
since the claim ``Then in $Q_{R,T}$''
should read ``in $Q_{R/2,T/2}$''. The necessity
of reducing the domain in~\cite{16MZ}
comes from formula (2.11) there.
In addition, it seems there could be some constants missing
in formula~(1.5), and also in formula~(1.6)
when~$\lambda>0$ and~$\alpha<0$
of~\cite{16MZ}, since one term has a negative
sign (these constants should probably have
appeared in formulas~(2.20) and~(2.26)
in~\cite{16MZ} and the proof
should take care of the delicate situation in which
the maximal point there occurs on small
values of the cut-off function).
To avoid confusion, we do not include
the unclear formulas in our version of
the main theorem of~\cite{16MZ}.
On the other hand, it seems to us that the cases~$\lambda=0$,
$\alpha=0$
and~$\alpha=1$, which were in principle
omitted in the original formulation of
Theorem 1.1 in \cite{16MZ}, can be included without
extra effort, hence these cases are
explicitly present in the formulation
given here.} in $Q_{R/2,T/2}$, we have
\begin{itemize}
\item if~$\lambda<0$ and~$\alpha\in(-\infty,1]$,
\begin{equation}\label{CAL:Z3}
\frac{|\nabla u|^2}{u^2}\le
C\,\left(k+\frac1{R^2}+\frac1T+\lambda(\alpha-1)m^{\alpha-1}\right)
\left(1+\ln\frac{M}{u}\right)^2;
\end{equation}
\item if~$\lambda\ge0$ and~$\alpha\in [1,+\infty)$,
\begin{equation}\label{CAL:Z2}
\frac{|\nabla u|^2}{u^2}\le
C\,\left(k+\frac1{R^2}+\frac1T+\lambda\alpha M^{\alpha-1}\right)
\left(1+\ln\frac{M}{u}\right)^2;\end{equation}
\item if~$\lambda\ge0$ and~$\alpha\in[0,1)$,
\begin{equation}\label{CAL:Z1}
\frac{|\nabla u|^2}{u^2}\le
C\,\left(k+\frac1{R^2}+\frac1T+\lambda\alpha m^{\alpha-1}\right)
\left(1+\ln\frac{M}{u}\right)^2.\end{equation}
\end{itemize}
Here, the constant C depends only on the dimension n.
\end{theorem*}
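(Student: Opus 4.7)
The plan is to derive the three estimates as a direct specialization of our uniform interior bound, Corollary~\ref{ILCOPEIR89}, to the nonlinearity~$S(x,t,u):=\lambda u^{\alpha}$. Since this~$S$ has no explicit dependence on~$x$, the gradient in~\eqref{GAMMA} vanishes and~$\gamma=0$, hence~$\mathscr{C}=\sqrt{\mu}$. Using~\eqref{BGblaolvlwebla1q} together with the elementary inequality~$\sqrt[4]{k_+}/\sqrt{R}\le\tfrac12(\sqrt{k_+}+1/R)$, one obtains
\[
\big(\mathscr{C}+\mathscr{T}_\star+\mathscr{S}_\star\big)^2\le C\left(\mu+k+\frac{1}{R^2}+\frac{1}{T}\right),
\]
so squaring Corollary~\ref{ILCOPEIR89} gives
\[
\frac{|\nabla u|^{2}}{u^{2}}\le C\!\left(\mu+k+\frac{1}{R^{2}}+\frac{1}{T}\right)\!\left(1+\ln\frac{M}{u}\right)^{\!2}
\]
on~$Q_{R/2,T/2}$. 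The task therefore reduces to bounding~$\mu$ in each of the three regimes of~$(\lambda,\alpha)$.

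For~$S=\lambda u^\alpha$ we have~$\partial_u S=\lambda\alpha u^{\alpha-1}$ and~$S/u=\lambda u^{\alpha-1}$, so~\eqref{LA:23} becomes
\[
\mu=\sup_{Q_{R,T}}\bigl(k+\lambda\,u^{\alpha-1}A\bigr)_+,\qquad A:=\alpha-1+\frac{1}{1-v}.
\]
Since~$v\le 0$ by~\eqref{Jna:0}, one has~$1/(1-v)\in(0,1]$, hence the two elementary bounds~$\alpha-1<A\le\alpha$ which drive the case analysis. In Case~2 ($\lambda\ge 0$, $\alpha\ge 1$) the upper bound~$A\le\alpha$ and the monotonicity~$u^{\alpha-1}\le M^{\alpha-1}$ yield~$\mu\le k+\lambda\alpha M^{\alpha-1}$, producing~\eqref{CAL:Z2}. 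In Case~3 ($\lambda\ge 0$, $\alpha\in[0,1)$), the same bound~$A\le\alpha$ combined now with~$u^{\alpha-1}\le m^{\alpha-1}$ (since~$\alpha-1<0$) gives~$\mu\le k+\lambda\alpha m^{\alpha-1}$, whence~\eqref{CAL:Z1}.

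Case~1 ($\lambda<0$, $\alpha\le 1$) will require a brief sign analysis, which I expect to be the only mildly delicate step. Here~$\lambda u^{\alpha-1}<0$, so to upper bound~$\lambda u^{\alpha-1}A$ one wants~$A$ as negative as possible; using the pointwise inequality~$A\ge\alpha-1$ and the negativity of the coefficient,
\[
\lambda u^{\alpha-1}A\;\le\;\lambda u^{\alpha-1}(\alpha-1)\;=\;\lambda(\alpha-1)\,u^{\alpha-1},
\]
and since~$\lambda(\alpha-1)\ge 0$ and~$u^{\alpha-1}\le m^{\alpha-1}$ (again because~$\alpha-1\le 0$), one concludes~$\mu\le k+\lambda(\alpha-1)m^{\alpha-1}$ and hence~\eqref{CAL:Z3}.

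The main obstacle in this plan is not any hidden analytical difficulty but precisely the bookkeeping of signs in Case~1: one must check that in the extremal direction for the product~$\lambda u^{\alpha-1}A$ the factor~$A$ is taken at its most negative value while~$u^{\alpha-1}$ is taken at its largest, so that the resulting bound~$\lambda(\alpha-1)m^{\alpha-1}$ is nonnegative and thus genuinely absorbs the positive part in~\eqref{LA:23}. Once this sign analysis is in place, all three estimates drop out mechanically from the squared form of Corollary~\ref{ILCOPEIR89}, and no further computation is needed.
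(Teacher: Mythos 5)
Your proposal is correct and follows essentially the same route as the paper: both specialize Corollary~\ref{ILCOPEIR89} to $S=\lambda u^{\alpha}$, compute $\gamma=0$ and $\mu$ via~\eqref{LA:23}, square, absorb $\sqrt[4]{k_+}/\sqrt{R}$, and then bound $\mu$ by the same three-case sign analysis (your reformulation $\lambda u^{\alpha-1}A$ with $A=\alpha-1+1/(1-v)\in(\alpha-1,\alpha]$ is just a compact rewriting of the paper's inequalities, including dropping the $\lambda u^{\alpha-1}/(1-v)$ term when $\lambda<0$).
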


\begin{proof} We can use the setting in~\eqref{20}
with~$t_0:=T$ and~$S(x,t,u):=\lambda u^\alpha$. With this, recalling~\eqref{GAMMA}
and~\eqref{LA:23}, we see that
\begin{eqnarray*}
\gamma=0\qquad{\mbox{ and }}\qquad
\mu=\sup_{ Q_{R,T}}
\left(k+
(\alpha-1)\lambda u^{\alpha-1}+
\frac{ \lambda u^{\alpha-1}}{1-v}
\right)_+.\end{eqnarray*}
Consequently, by~\eqref{TUTTITE-0},
$${\mathscr{C}}=
\sup_{ Q_{R,T}}\sqrt{
\left(k+
(\alpha-1)\lambda u^{\alpha-1}+
\frac{ \lambda u^{\alpha-1}}{1-v}
\right)_+}.$$
Hence, in light of~\eqref{BGblaolvlwebla1q},
\begin{equation*}\begin{split}\mathscr{C}+
{{\mathscr{T}}}_\star+{{\mathscr{S}}}_\star \le
\sup_{ Q_{R,T}}\sqrt{
\left(k+
(\alpha-1)\lambda u^{\alpha-1}+
\frac{ \lambda u^{\alpha-1}}{1-v}
\right)_+}+
\frac{1}{\sqrt{T}}
+\frac{1}{R}
+\frac{\sqrt[4]{k}}{\sqrt{R}}
.\end{split}\end{equation*}
Then, we can exploit Corollary~\ref{ILCOPEIR89} in this
setting, which yields that, in~$Q_{R/2,T/2}$,
\begin{equation}\label{vgj-vgh-vb}
\begin{split}&\mathscr{E}:=
\left(1+\ln \frac{M}{u(x,t)}\right)^{-2}\;
\frac{|\nabla u|^2}{u^2} \leq C\Big(
\mathscr{C}+
{{\mathscr{T}}}_\star+{{\mathscr{S}}}_\star\Big)^2\\
&\qquad\le C\,\bigg[
\sup_{ Q_{R,T}}
\left(k+
(\alpha-1)\lambda u^{\alpha-1}+
\frac{ \lambda u^{\alpha-1}}{1-v}
\right)_++
\frac{1}{{T}}
+\frac{1}{R^2}
+\frac{\sqrt{k}}{{R}}
\bigg],
\end{split}\end{equation}
up to renaming~$C$ line after line.

Now, if~$\lambda\ge0$ and~$\alpha\in[0,1)$, recalling~\eqref{Jna:0},
$$ k+
(\alpha-1)\lambda u^{\alpha-1}+
\frac{ \lambda u^{\alpha-1}}{1-v}\le
k+(\alpha-1)\lambda u^{\alpha-1}+
\lambda u^{\alpha-1}\le
k+\lambda\alpha m^{\alpha-1}.
$$
{F}rom this and~\eqref{vgj-vgh-vb}, we find that
\begin{eqnarray*} \mathscr{E}&\le&
C\,\bigg[
\left(
k+\lambda\alpha m^{\alpha-1}
\right)+
\frac{1}{{T}}
+\frac{1}{R^2}
+\frac{\sqrt{k}}{{R}}
\bigg]\\&\le&
C\,\bigg(
k
+\lambda\alpha m^{\alpha-1}
+
\frac{1}{{T}}
+\frac{1}{R^2}
\bigg),
\end{eqnarray*}
from which~\eqref{CAL:Z1}
plainly follows.

If instead~$\lambda\ge0$ and~$\alpha\in[1,+\infty)$,
using that~$v\le0$, we remark that
$$ k+
(\alpha-1)\lambda u^{\alpha-1}+
\frac{ \lambda u^{\alpha-1}}{1-v}\le
k+
(\alpha-1)\lambda u^{\alpha-1}+
\lambda u^{\alpha-1}
=k+
\lambda \alpha u^{\alpha-1}\le
k+
\lambda\alpha M^{\alpha-1}.$$
This and~\eqref{vgj-vgh-vb} give that
\begin{equation*}
\begin{split}\mathscr{E}\,&\le
C\,\bigg[
\big(
k+
\lambda\alpha M^{\alpha-1}\big)+
\frac{1}{{T}}
+\frac{1}{R^2}
+\frac{\sqrt{k}}{{R}}
\bigg]\\&\le
C\,\bigg(
k
+
\lambda\alpha M^{\alpha-1}+
\frac{1}{{T}}
+\frac{1}{R^2}
\bigg),
\end{split}\end{equation*}
which
proves~\eqref{CAL:Z2}.

Now, we suppose that~$\lambda<0$
and~$\alpha\in(-\infty,1]$.
In this case we see that
$$ (\alpha-1)\lambda u^{\alpha-1}=
\frac{|(\alpha-1)\lambda|}{ u^{|1-\alpha|}}
\le \frac{|(\alpha-1)\lambda|}{ m^{|1-\alpha|}}=
(\alpha-1)\lambda m^{\alpha-1}.
$$
{F}rom this, one deduces that
$$ k+
(\alpha-1)\lambda u^{\alpha-1}+
\frac{ \lambda u^{\alpha-1}}{1-v}\le
k+
(\alpha-1)\lambda u^{\alpha-1}\le k+
(\alpha-1)\lambda m^{\alpha-1}.$$
Therefore, one can use this information and~\eqref{vgj-vgh-vb}
to obtain~\eqref{CAL:Z3}, as desired.
\end{proof}

{F}rom our main results a general
gradient estimate for solutions
of semilinear parabolic equations
follows as a byproduct. For concreteness,
we point out the following explicit result:

\begin{cor}\label{MANTCOR}
Let~$p\in\R$.
Let~$\mathscr{M}$
be a Riemannian manifold with~$
\mathrm{Ric}(\mathscr{M})\geq-k$ for some~$k\in\R$.
Let~$ u$ be a positive solution to the
semilinear heat equation
\begin{equation}\label{8j7yh6tg4ed9o2ewd0so} u_t=\Delta u+u^p\end{equation}
in~$Q_{R,T}$. Assume that $u\le M$ in~$Q_{R,T}$.
Then, there exists~$C>0$ depending on~$n$ such that,
on~$ Q_{R/2,T/2}$, there holds
$$\frac{|\nabla u(x,t)|}{u(x,t)}\le
C\left(\max\left\{
\sqrt{k_+}\,,\sqrt{\left(k+p\,\vartheta^{p-1}
\right)_+}\right\}
+\frac{1}{\sqrt{T}}+
\frac{1}{R}
\right)
\left(1+\ln \frac{M}{u(x,t)}\right),$$
where
\begin{equation}\label{chietetea} \vartheta:=\begin{cases}
M &{\mbox{ if }} p>1,\\
1&{\mbox{ if }} p=1,\\
\displaystyle
\inf_{Q_{R,T}} u&{\mbox{ if }} p\in(0,1),\\
0&{\mbox{ if }} p=0,\\
M
&{\mbox{ if }} p<0.
\end{cases}\end{equation}
\end{cor}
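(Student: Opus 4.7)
The plan is to apply Corollary~\ref{ILCOPEIR89} to the source term $S(x,t,u):=u^p$, then read off the quantities appearing in~\eqref{GAMMA}, \eqref{LA:23}, \eqref{TUTTITE-0} and~\eqref{BGblaolvlwebla1q}. A first immediate observation is that, since $S$ depends only on~$u$ (and not explicitly on $x$), the constant $\gamma$ in~\eqref{GAMMA} vanishes. Hence the only nontrivial object to estimate is $\mu$ from~\eqref{LA:23}.

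Using $\partial_u S = p u^{p-1}$ and $S/u = u^{p-1}$, a direct computation gives
\[
k+\partial_u S(x,t,u)-\frac{S(x,t,u)}{u}+\frac{S(x,t,u)}{u(1-v)} = k+(p-1)u^{p-1}+\frac{u^{p-1}}{1-v}.
\]
Since $v\le 0$ by~\eqref{Jna:0}, so that $1-v\ge 1$, and since $u^{p-1}>0$, the last summand can be absorbed to yield the pointwise upper bound $k+p\,u^{p-1}$. The heart of the argument is then a short case analysis which shows $\sup_{Q_{R,T}} p u^{p-1}\le p\vartheta^{p-1}$ with $\vartheta$ as in~\eqref{chietetea}. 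Indeed, the derivative of $u\mapsto p u^{p-1}$ has sign $p(p-1)$, so the function is increasing when $p>1$ or $p<0$ (supremum attained at $u=M$, giving $\vartheta=M$) and decreasing when $p\in(0,1)$ (supremum attained at $u=\inf_{Q_{R,T}} u$, giving $\vartheta=\inf u$); the cases $p=0,1$ are handled by inspection. This yields $\mu\le(k+p\vartheta^{p-1})_+$ and hence $\mathscr{C}=\gamma^{1/3}+\sqrt{\mu}\le\sqrt{(k+p\vartheta^{p-1})_+}$.

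It then remains to substitute into Corollary~\ref{ILCOPEIR89} and simplify the boundary contribution. By the elementary Young-type inequality $\sqrt[4]{k_+}/\sqrt R\le \tfrac12(\sqrt{k_+}+1/R)$, one has $\mathscr{T}_\star+\mathscr{S}_\star\le C\bigl(\sqrt{k_+}+1/\sqrt T+1/R\bigr)$, and the conclusion of Corollary~\ref{ILCOPEIR89} therefore reads
\[
\frac{|\nabla u(x,t)|}{u(x,t)}\le C\Bigl(\sqrt{(k+p\vartheta^{p-1})_+}+\sqrt{k_+}+\tfrac{1}{\sqrt T}+\tfrac{1}{R}\Bigr)\Bigl(1+\ln\tfrac{M}{u(x,t)}\Bigr)
\]
on $Q_{R/2,T/2}$. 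Replacing the sum $\sqrt{k_+}+\sqrt{(k+p\vartheta^{p-1})_+}$ by the comparable quantity $\max\bigl\{\sqrt{k_+},\sqrt{(k+p\vartheta^{p-1})_+}\bigr\}$, at the cost of a harmless factor absorbed into~$C$, produces the form of the estimate stated in the corollary.

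The main (mild) obstacle is keeping the signs straight in the case analysis for~$\mu$, in particular for $p<0$, where the point is that although $p u^{p-1}$ is negative, it is still maximized at $u=M$ because $u^{p-1}$ is decreasing. Beyond this point, the argument is an algebraic repackaging of the general estimate.
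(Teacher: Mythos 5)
Your proposal is correct and follows essentially the same route as the paper: apply Corollary~\ref{ILCOPEIR89} with $S(x,t,u)=u^p$, note $\gamma=0$, bound the bracket in~\eqref{LA:23} by $k+p\,u^{p-1}$ using $1-v\ge1$, perform the case analysis on the sign of $p(p-1)$ to identify the extremal value $\vartheta$, and finish with the AM--GM absorption of $\sqrt[4]{k_+}/\sqrt R$ and the trivial equivalence between the sum and the maximum. Your observation that the sign of the derivative of $u\mapsto p\,u^{p-1}$ is exactly $\mathrm{sign}\,p(p-1)$ is a slightly cleaner packaging of the same case split that the paper carries out by inspection.
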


\begin{proof}
We take~$p\in\R$ and~$S(x,t,u):=u^p$.
In this way, the notations in~\eqref{GAMMA}
and~\eqref{LA:23} yield that
\begin{equation*}
\begin{split}
\gamma\,&=0\\ {\mbox{and}}\qquad
\mu\,&=\sup_{ (x,t)\in Q_{R,T}}
\left(k+
pu^{p-1}
-u^{p-1}+\frac{ u^{p-1}}{1-v}
\right)_+\\
&=\sup_{ (x,t)\in Q_{R,T}}
\left(k+
\left((p-1)+\frac{ 1}{1-v}\right)u^{p-1}
\right)_+\\
&\le\sup_{ (x,t)\in Q_{R,T}}
\left(k+pu^{p-1}
\right)_+\\
&\le
\left(k+p\,\vartheta^{p-1}
\right)_+
.\end{split}\end{equation*}
Hence, by~\eqref{TUTTITE-0},
\begin{equation*}
{\mathscr{C}}\le
\sqrt{\left(k+p\,\vartheta^{p-1}
\right)_+}.\end{equation*}
Using this, \eqref{BGblaolvlwebla1q}
and Corollary~\ref{ILCOPEIR89},
we thereby conclude that, in~$Q_{R/2,T/2}$,
\begin{equation}\label{Pijwdyrrgmalr340}
\begin{split}
\left(1+\ln \frac{M}{u(x,t)}\right)^{-1}
\frac{|\nabla u(x,t)|}{u(x,t)}\leq\,& C
\big(\mathscr{C}+
{{\mathscr{T}}}_\star+{{\mathscr{S}}}_\star\big)\\
\le\,& C\left(
\sqrt{\left(k+p\,\vartheta^{p-1}
\right)_+}
+\frac{1}{\sqrt{T}}+
\frac{1}{R}
+\frac{\sqrt[4]{k_+}}{\sqrt{R}}
\right).
\end{split}
\end{equation}
We also remark that, by the Cauchy-Schwarz inequality,
$$ \frac{\sqrt[4]{k_+}}{2\sqrt{R}}
\le
\sqrt{k_+}
+
\frac{1}{R}.$$
This and~\eqref{Pijwdyrrgmalr340}
give that
\begin{equation*}
\left(1+\ln \frac{M}{u(x,t)}\right)^{-1}
\frac{|\nabla u(x,t)|}{u(x,t)}\leq C\left(
\sqrt{k_+}\,+\,\sqrt{\left(k+p\,\vartheta^{p-1}
\right)_+}
+\frac{1}{\sqrt{T}}+
\frac{1}{R}
\right).
\end{equation*}
This yields the desired result.
\end{proof}

We remark that
Corollary~\ref{MANTCOR}
contains, as a special case, a recent result obtained in~\cite{12345}
which dealt with the case~$p>1$ (see in particular Lemma~3.1
in~\cite{12345}).\medskip

Moreover, we think that an interesting treat of our Corollary~\ref{MANTCOR}
in its general formulation
is that the constant~$C$ is independent of~$p$:
besides its technical relevance, this fact reveals a telling
feature of the nonlinear parabolic equations,
in the sense that, at a formal level, for large~$p$,
given~$a\in(0,1)$,
solutions~$u=u_p$ of~\eqref{8j7yh6tg4ed9o2ewd0so}
with~$0<u\le1-a$
satisfy, on~$ Q_{R/2,T/2}$,
\[\frac{|\nabla u|}{u}\le
C\left(\max\left\{
\sqrt{k_+}\,,\sqrt{\left(k+p\,(1-a)^{p-1}
\right)_+}\right\}
+\frac{1}{\sqrt{T}}+
\frac{1}{R}
\right)
\left(1+|\ln u|\right),\]
which, as $p\to+\infty$, formally
boils down to
\[\frac{|\nabla u|}{u}\le
C\left(
\sqrt{k_+}
+\frac{1}{\sqrt{T}}+
\frac{1}{R}
\right)
\left(1+|\ln u|\right),\]
which recovers the estimate for the heat equation given
in~\eqref{KAn873:023838576} (we remark that also equation~\eqref{8j7yh6tg4ed9o2ewd0so}
reduces to the heat equation as~$p\to+\infty$ in this regime,
and that the assumption~$u\le1-a$ is equivalent
to~$u$ bounded in the case of the heat equation
due to its linear structure). Though we do not address a rigorous
treatment of these limit
properties as~$p\to+\infty$
here, we think that our unified approach to gradient estimates entails a number of interesting connections between structurally different equations which could be worth a further exploration.
\medskip

In addition, from Corollary~\ref{MANTCOR},
one re-obtains a recent result
motivated by ancient solutions:

\begin{theorem*}[Lemma\footnote{See also
the enhanced version of~\cite{4CM}
available on {\tt http://cvgmt.sns.it/paper/3135/}} 4.1 in~\cite{4CM}]
Let~$\mathscr{M}$
be a Riemannian manifold with~$
\mathrm{Ric}(\mathscr{M})\geq-k$ for some~$k\in\R$.
Let~$ u$ be a positive solution to the
semilinear heat equation
$$ u_t=\Delta u+u^2$$
in~$Q_{R,T}$. Assume that $u\le M$ in~$Q_{R,T}$.
Then, there exists~$C>0$ depending on~$n$ such that,
on~$ Q_{R/2,T/2}$, there holds
\begin{equation}\label{MANUOVA}
\frac{|\nabla u|}{u}\le
C\,\left(\frac1R+\frac1{\sqrt{T}}+\sqrt{(2M+k)_+}\right)
\left(1+\ln\frac{M}{u}\right).
\end{equation}
\end{theorem*}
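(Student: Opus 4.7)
The plan is to simply specialize Corollary~\ref{MANTCOR} to the case $p=2$ and perform a short algebraic simplification of the resulting coefficient. Since the hard analytical work (Bochner/Maximum Principle argument, cut-off constructions, case analysis on the maximum point) has already been carried out in the proof of Theorem~\ref{TH2} and packaged for semilinear right-hand sides $u^p$ in Corollary~\ref{MANTCOR}, this final statement is a straightforward consequence and no new estimates are required.

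First, I would identify that the equation $u_t=\Delta u+u^2$ fits the framework of Corollary~\ref{MANTCOR} with the choice $p=2$. Inspecting the definition of $\vartheta$ in~\eqref{chietetea}, since $p=2>1$ we have $\vartheta=M$, and hence
\[
p\,\vartheta^{p-1}=2M.
\]
Substituting into the estimate of Corollary~\ref{MANTCOR} gives, on $Q_{R/2,T/2}$,
\[
\frac{|\nabla u(x,t)|}{u(x,t)}\le C\left(\max\left\{\sqrt{k_+}\,,\,\sqrt{(k+2M)_+}\right\}+\frac{1}{\sqrt{T}}+\frac{1}{R}\right)\left(1+\ln\frac{M}{u(x,t)}\right).
\]

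The second step is to simplify the maximum. Since $u>0$ and $M\ge u>0$, we have $2M\ge 0$, and therefore $k_+\le(k+2M)_+$, which gives
\[
\max\left\{\sqrt{k_+}\,,\,\sqrt{(k+2M)_+}\right\}=\sqrt{(k+2M)_+}.
\]
Combining this with the previous display yields exactly~\eqref{MANUOVA}, as desired.

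Since the whole argument is a direct invocation of Corollary~\ref{MANTCOR} followed by an elementary comparison of non-negative quantities, there is no real obstacle to overcome here: the only thing to check carefully is that the correct branch of~\eqref{chietetea} is selected (namely the $p>1$ branch, giving $\vartheta=M$ rather than the infimum of $u$ or some other value), and that the trivial bound $k_+\le(k+2M)_+$ allows the two square-root terms to be collapsed into the single term appearing in~\eqref{MANUOVA}.
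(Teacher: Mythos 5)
Your proposal is correct and follows essentially the same route as the paper: invoke Corollary~\ref{MANTCOR} with $p=2$ (so $\vartheta=M$ and $p\,\vartheta^{p-1}=2M$), then observe that $\sqrt{k_+}\le\sqrt{(k+2M)_+}$ (since $M>0$) to collapse the maximum into the single term $\sqrt{(k+2M)_+}$, yielding~\eqref{MANUOVA}.
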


\begin{proof} Recalling~\eqref{chietetea},
we see that,
when~$p=2$,
\begin{equation*}
\sqrt{k_+}\le
\sqrt{\left(k+2M\right)_+}
=\sqrt{\left(k+p\,\vartheta^{p-1}
\right)_+}.
\end{equation*}
{F}rom this and Corollary~\ref{MANTCOR},
we obtain~\eqref{MANUOVA}.
\end{proof}

\section*{Acknowledgments}

\noindent
Cecilia Cavaterra has been partially supported by GNAMPA 
(Gruppo Nazionale per l'Analisi Matematica, la Probabilit\`{a}
e le loro Applicazioni) of INdAM (Istituto Nazionale di Alta Matematica).
Serena Dipierro and Enrico Valdinoci are members of INdAM and AustMS.
Serena Dipierro has been supported by the Australian Research Council DECRA
DE180100957 ``PDEs, free boundaries and applications''.
Enrico Valdinoci has been supported by the Australian Laureate Fellowship FL190100081
``Minimal surfaces, free boundaries and partial differential equations''.
Zu Gao has been supported by the Independent Innovation Research Fund of Wuhan University of Technology (No: 2021IVA058).

\begin{appendix}
\section{Proof of~\eqref{METRIX-2}}

We recall that a metric~$g_{ij}$ is said to be conformal
(or, more precisely, conformal to the Euclidean metric) if
\begin{equation}\label{CONFOG}
g_{ij}=\varphi \delta_{ij},
\end{equation}
for some scalar factor~$\varphi$. For instance, the metric of the
Poincar\'e disk in the plane given in~\eqref{METRIX-1} is conformal, with factor~$\varphi:=
\frac{4\lambda^2}{
(1-|x|^{2})^{2}}$, being~$|\cdot|$ the standard Euclidean norm.

The Laplacian operator (or, more precisely, the Laplace-Beltrami operator)
possesses an explicit representation with respect to conformal metrics: roughly speaking,
since conformal metrics preserve angles, an infinitesimal orthonormal frame
is transformed into an infinitesimal orthogonal frame (the length of the vectors possibly being affected
by the conformal factor~$\varphi$), thus the new Laplacian (being computed as sum of second derivatives
with respect to an orthonormal frame) remains the same possibly up
to a ``curvature'' term which accounts for the variation of~$\varphi$ (this additional term pops up because the Laplacian
is a second order operator). The case of dimension~$2$ is somewhat special, since this additional
term vanishes.

Here are the explicit computations underpinning this heuristic idea. {F}rom~\eqref{CONFOG}, we have that~$g^{ij}=\varphi^{-1}\delta^{ij}$ and~$\det g=\varphi^{n}$. Hence,
in local coordinates, the Laplacian with respect to the conformal metrics in~\eqref{CONFOG} is
\begin{eqnarray*} &&\frac{1}{\sqrt{\det g}} \sum_{i,j=1}^n \partial_i\Big( \sqrt{\det g} \;g^{ij} \partial_j\Big)=
\frac{1}{\varphi^{\frac{n}2}} \sum_{i,j=1}^n \partial_i\Big( \varphi^{\frac{n-2}2} \,\delta^{ij} \partial_j\Big)=
\frac{1}{\varphi^{\frac{n}2}} \sum_{i=1}^n \partial_i\Big( \varphi^{\frac{n-2}2}  \partial_i\Big)\\&&\qquad\qquad=
\frac{1}{\varphi^{\frac{n}2}} \sum_{i=1}^n \left( \frac{n-2}2\,\varphi^{\frac{n-4}2} \partial_i\varphi\, \partial_i
+\varphi^{\frac{n-2}2}  \partial_{ii}\right)
=\sum_{i=1}^n \left( \frac{n-2}{2\,\varphi^2}  \partial_i\varphi\,\partial_i
+\frac{1}{\varphi}  \partial_{ii}\right).
\end{eqnarray*}
In dimension~$2$, this boils down to
$$ \frac{1}{\varphi} \,\sum_{i=1}^n \partial_{ii},$$
which is a scalar multiple of the Euclidean Laplacian, and therefore~\eqref{METRIX-2} plainly follows.

\end{appendix}

\vskip6mm
\begin{bibdiv}
\begin{biblist}

\bib{MR3450752}{article}{
   author={Attouchi, Amal},
   title={Gradient estimate and a Liouville theorem for a $p$-Laplacian
   evolution equation with a gradient nonlinearity},
   journal={Differential Integral Equations},
   volume={29},
   date={2016},
   number={1-2},
   pages={137--150},
   issn={0893-4983},
   review={\MR{3450752}},
}

\bib{BERNOI}{article}{
   author={Cabr\'{e}, Xavier},
   author={Dipierro, Serena},
   author={Valdinoci, Enrico},
      title={The Bernstein technique for integro-differential equations},
            journal = {arXiv e-prints},
      date={2020},
      eprint={2010.00376},
      archivePrefix={arXiv},
      primaryClass={math.AP}
}

\bib{MR1351007}{book}{
   author={Caffarelli, Luis A.},
   author={Cabr\'{e}, Xavier},
   title={Fully nonlinear elliptic equations},
   series={American Mathematical Society Colloquium Publications},
   volume={43},
   publisher={American Mathematical Society, Providence, RI},
   date={1995},
   pages={vi+104},
   isbn={0-8218-0437-5},
   review={\MR{1351007}},
   doi={10.1090/coll/043},
}

\bib{MR1296785}{article}{
   author={Caffarelli, Luis},
   author={Garofalo, Nicola},
   author={Seg\`ala, Fausto},
   title={A gradient bound for entire solutions of quasi-linear equations
   and its consequences},
   journal={Comm. Pure Appl. Math.},
   volume={47},
   date={1994},
   number={11},
   pages={1457--1473},
   issn={0010-3640},
   review={\MR{1296785}},
   doi={10.1002/cpa.3160471103},
}

\bib{4CM}{article}{
   author={Castorina, Daniele},
   author={Mantegazza, Carlo},
   title={Ancient solutions of semilinear heat equations on Riemannian
   manifolds},
   journal={Atti Accad. Naz. Lincei Rend. Lincei Mat. Appl.},
   volume={28},
   date={2017},
   number={1},
   pages={85--101},
   issn={1120-6330},
   review={\MR{3621772}},
   doi={10.4171/RLM/753},
}

\bib{12345}{article}{
   author={Castorina, Daniele},
   author={Mantegazza, Carlo},
   title={Ancient solutions of superlinear heat equations on
Riemannian manifolds},
   journal={Commun. Contemp. Math.},
   date={to appear},
}

\bib{2019arXiv190304569C}{article}{
 author={Cavaterra, Cecilia},
   author={Dipierro, Serena},
   author={Farina, Alberto},
   author={Gao, Zu},
   author={Valdinoci, Enrico},
   title={Pointwise gradient bounds for entire solutions of elliptic
   equations with non-standard growth conditions and general nonlinearities},
   journal={J. Differential Equations},
   volume={270},
   date={2021},
   pages={435--475},
   issn={0022-0396},
   review={\MR{4150380}},
   doi={10.1016/j.jde.2020.08.007},
}

\bib{17CZ}{article}{
   author={Chen, Qun},
     author={Zhao, Guangwen},
    title={Li-Yau type and Souplet-Zhang type gradient estimates of a parabolic equation for the V-Laplacian},
   journal={J. Math. Anal. Appl.},
   volume={463},
   date={2018},
   number={2},
   pages={744--759},
   issn={0022-247X},
   review={\MR{3785481}},
   doi={10.1016/j.jmaa.2018.03.049},
}

\bib{MR385749}{article}{
   author={Cheng, S. Y.},
   author={Yau, S. T.},
   title={Differential equations on Riemannian manifolds and their geometric
   applications},
   journal={Comm. Pure Appl. Math.},
   volume={28},
   date={1975},
   number={3},
   pages={333--354},
   issn={0010-3640},
   review={\MR{385749}},
   doi={10.1002/cpa.3160280303},
}

\bib{MR3231999}{article}{
   author={Cozzi, Matteo},
   author={Farina, Alberto},
   author={Valdinoci, Enrico},
   title={Gradient bounds and rigidity results for singular, degenerate,
   anisotropic partial differential equations},
   journal={Comm. Math. Phys.},
   volume={331},
   date={2014},
   number={1},
   pages={189--214},
   issn={0010-3616},
   review={\MR{3231999}},
   doi={10.1007/s00220-014-2107-9},
}

\bib{MR4238774}{article}{
   author={Dipierro, Serena},
   author={Gao, Zu},
   author={Valdinoci, Enrico},
   title={Global gradient estimates for nonlinear parabolic operators},
   journal={ESAIM Control Optim. Calc. Var.},
   volume={27},
   date={2021},
   pages={Paper No. 21, 37},
   issn={1292-8119},
   review={\MR{4238774}},
   doi={10.1051/cocv/2021016},
}

\bib{14DK}{article}{
   author={Dung, Nguyen Thac},
     author={Khanh, Nguyen Ngoc},
    title={Gradient estimates of Hamilton-Souplet-Zhang type for a general heat equation on Riemannian manifolds},
   journal={Arch. Math. (Basel)},
   volume={105},
   date={2015},
   number={5},
   pages={479--490},
   issn={0003-889X},
   review={\MR{3413923}},
   doi={10.1007/s00013-015-0828-4},
}

\bib{14DKN}{article}{
   author={Dung, Nguyen Thac},
   author={Khanh, Nguyen Ngoc},
   author={Ng\^{o}, Qu\^{o}c Anh},
   title={Gradient estimates for some $f$-heat equations driven by
   Lichnerowicz's equation on complete smooth metric measure spaces},
   journal={Manuscripta Math.},
   volume={155},
   date={2018},
   number={3-4},
   pages={471--501},
   issn={0025-2611},
   review={\MR{3763415}},
   doi={10.1007/s00229-017-0946-3},
}

\bib{MR4021092}{article}{
   author={Ha Tuan Dung},
   author={Nguyen Thac Dung},
   title={Sharp gradient estimates for a heat equation in Riemannian
   manifolds},
   journal={Proc. Amer. Math. Soc.},
   volume={147},
   date={2019},
   number={12},
   pages={5329--5338},
   issn={0002-9939},
   review={\MR{4021092}},
   doi={10.1090/proc/14645},
}

\bib{MR2680184}{article}{
   author={Farina, Alberto},
   author={Valdinoci, Enrico},
   title={A pointwise gradient estimate in possibly unbounded domains with
   nonnegative mean curvature},
   journal={Adv. Math.},
   volume={225},
   date={2010},
   number={5},
   pages={2808--2827},
   issn={0001-8708},
   review={\MR{2680184}},
   doi={10.1016/j.aim.2010.05.008},
}

\bib{MR2812957}{article}{
   author={Farina, Alberto},
   author={Valdinoci, Enrico},
   title={A pointwise gradient bound for elliptic equations on compact
   manifolds with nonnegative Ricci curvature},
   journal={Discrete Contin. Dyn. Syst.},
   volume={30},
   date={2011},
   number={4},
   pages={1139--1144},
   issn={1078-0947},
   review={\MR{2812957}},
   doi={10.3934/dcds.2011.30.1139},
}

\bib{MR1230276}{article}{
   author={Hamilton, Richard S.},
   title={A matrix Harnack estimate for the heat equation},
   journal={Comm. Anal. Geom.},
   volume={1},
   date={1993},
   number={1},
   pages={113--126},
   issn={1019-8385},
   review={\MR{1230276}},
   doi={10.4310/CAG.1993.v1.n1.a6},
}

\bib{13HM}{article}{
   author={Huang, Guangyue},
   author={Ma, Bingqing},
    title={Hamilton's gradient estimates of porous medium and fast diffusion equations},
   journal={Geom. Dedicata},
   volume={188},
   date={2017},
   number={},
   pages={1--16},
   issn={0046-5755},
   review={\MR{3639621}},
   doi={10.1007/s10711-016-0201-1},
}

\bib{5J}{article}{
   author={Jiang, Xinrong},
   title={Gradient estimate for a nonlinear heat equation on Riemannian
   manifolds},
   journal={Proc. Amer. Math. Soc.},
   volume={144},
   date={2016},
   number={8},
   pages={3635--3642},
   issn={0002-9939},
   review={\MR{3503732}},
   doi={10.1090/proc/12995},
}

\bib{MR0114050}{article}{
   author={Lady\v{z}enskaya, O. A.},
   title={Solution of the first boundary problem in the large for
   quasi-linear parabolic equations},
   language={Russian},
   journal={Trudy Moskov. Mat. Ob\v{s}\v{c}.},
   volume={7},
   date={1958},
   pages={149--177},
   issn={0134-8663},
   review={\MR{0114050}},
}

\bib{6LY}{article}{
   author={Li, Peter},
   author={Yau, Shing-Tung},
   title={On the parabolic kernel of the Schr\"{o}dinger operator},
   journal={Acta Math.},
   volume={156},
   date={1986},
   number={3-4},
   pages={153--201},
   issn={0001-5962},
   review={\MR{834612}},
   doi={10.1007/BF02399203},
}

\bib{16MZ}{article}{
   author={Ma, Bingqing},
     author={Zeng, Fanqi},
    title={Hamilton-Souplet-Zhang's gradient estimates and Liouville theorems for a nonlinear parabolic equation},
   journal={C. R. Math. Acad. Sci. Paris, Ser. I},
   volume={356},
   date={2018},
   number={5},
   pages={550--557},
   issn={1631-073X},
   review={\MR{3790428}},
   doi={10.1016/j.crma.2018.04.003},
}

\bib{7MZS}{article}{
   author={Ma, Li},
   author={Zhao, Lin},
   author={Song Xianfa},
   title={Gradient estimate for the degenerate parabolic
   equation $u_t=\Delta F(u)+H(u)$ on manifolds},
   journal={J. Differential Equations},
   volume={244},
   date={2008},
   number={5},
   pages={1157--1177},
   issn={0022-0396},
   review={\MR{2392508}},
   doi={10.1016/j.jde.2007.08.014},
}

\bib{MR803255}{article}{
   author={Modica, Luciano},
   title={A gradient bound and a Liouville theorem for nonlinear Poisson
   equations},
   journal={Comm. Pure Appl. Math.},
   volume={38},
   date={1985},
   number={5},
   pages={679--684},
   issn={0010-3640},
   review={\MR{803255}},
   doi={10.1002/cpa.3160380515},
}

\bib{MR2327126}{book}{
   author={Oprea, John},
   title={Differential geometry and its applications},
   series={Classroom Resource Materials Series},
   edition={2},
   publisher={Mathematical Association of America, Washington, DC},
   date={2007},
   pages={xxii+469},
   isbn={978-0-88385-748-9},
   review={\MR{2327126}},
}

\bib{MR454338}{article}{
   author={Payne, L. E.},
   title={Some remarks on maximum principles},
   journal={J. Analyse Math.},
   volume={30},
   date={1976},
   pages={421--433},
   issn={0021-7670},
   review={\MR{454338}},
   doi={10.1007/BF02786729},
}

\bib{MR1480173}{book}{
   author={Petersen, Peter},
   title={Riemannian geometry},
   series={Graduate Texts in Mathematics},
   volume={171},
   publisher={Springer-Verlag, New York},
   date={1998},
   pages={xvi+432},
   isbn={0-387-98212-4},
   review={\MR{1480173}},
   doi={10.1007/978-1-4757-6434-5},
}

\bib{MR0402274}{article}{
   author={Serrin, James},
   title={Gradient estimates for solutions of nonlinear elliptic and
   parabolic equations},
   conference={
      title={Contributions to nonlinear functional analysis},
      address={Proc. Sympos., Math. Res. Center, Univ. Wisconsin, Madison,
      Wis.},
      date={1971},
   },
   book={
      publisher={Academic Press, New York},
   },
   date={1971},
   pages={565--601},
   review={\MR{0402274}},
}

\bib{SIRALORO}{article}{
author = {Sirakov, Boyan},
author ={Souplet, Philippe},
        title = {Liouville-type theorems for unbounded solutions of elliptic equations in half-spaces},
      journal = {arXiv e-prints},
         date = {2020},
                eprint = {2002.07247},
archivePrefix = {arXiv},
 primaryClass = {math.AP},
}

\bib{3SZ}{article}{
   author={Souplet, Philippe},
   author={Zhang, Qi S.},
   title={Sharp gradient estimate and Yau's Liouville theorem for the heat
   equation on noncompact manifolds},
   journal={Bull. London Math. Soc.},
   volume={38},
   date={2006},
   number={6},
   pages={1045--1053},
   issn={0024-6093},
   review={\MR{2285258}},
   doi={10.1112/S0024609306018947},
}

\bib{MR615561}{book}{
   author={Sperb, Ren\'{e} P.},
   title={Maximum principles and their applications},
   series={Mathematics in Science and Engineering},
   volume={157},
   publisher={Academic Press, Inc. [Harcourt Brace Jovanovich, Publishers],
   New York-London},
   date={1981},
   pages={ix+224},
   isbn={0-12-656880-4},
   review={\MR{615561}},
}

\bib{MR3254790}{article}{
   author={Teixeira, Eduardo V.},
   author={Urbano, Jos\'{e} Miguel},
   title={An intrinsic Liouville theorem for degenerate parabolic equations},
   journal={Arch. Math. (Basel)},
   volume={102},
   date={2014},
   number={5},
   pages={483--487},
   issn={0003-889X},
   review={\MR{3254790}},
   doi={10.1007/s00013-014-0648-y},
}

\bib{8W}{article}{
   author={Wu, Jiayong},
    title={Gradient Estimates for a Nonlinear Diffusion Equation on Complete Manifolds},
   journal={J. Partial Differ. Equ.},
   volume={23},
   date={2010},
   number={1},
   pages={68--79},
   issn={1000--940X},
   review={\MR{2640448}},
   doi={10.4208/jpde.v23.n1.4},
}

\bib{15W}{article}{
   author={Wu, Jiayong},
    title={Elliptic gradient estimates for a weighted heat equation and applications},
   journal={Math. Z.},
   volume={280},
   date={2015},
   number={1-2},
   pages={451--468},
   issn={0025-5874},
   review={\MR{3343915}},
   doi={10.1007/s00209-015-1432-9},
}

\bib{11X}{article}{
   author={Xu, Xiangjin},
    title={Gradient estimates for $u_t=\Delta F(u)$ on manifolds and some Liouville-type theorems},
   journal={J. Differential Equations},
   volume={252},
   date={2012},
   number={2},
   pages={1403--1420},
   issn={0022-0396},
   review={\MR{2853544}},
   doi={10.1016/j.jde.2011.08.004},
}

\bib{9Z}{article}{
   author={Zhu, Xiaobao},
    title={Hamilton's gradient estimates and Liouville theorems for fast diffusion equations on noncompact Remannian manifolds},
   journal={Proc. Amer. Math. Soc.},
   volume={139},
   date={2011},
   number={5},
   pages={1637--1644},
   issn={0002-9939},
   review={\MR{2763753}},
   doi={10.1090/S0002-9939-2010-10824-9},
}

\bib{MR2810695}{article}{
   author={Zhu, Xiaobao},
   title={Gradient estimates and Liouville theorems for nonlinear parabolic
   equations on noncompact Riemannian manifolds},
   journal={Nonlinear Anal.},
   volume={74},
   date={2011},
   number={15},
   pages={5141--5146},
   issn={0362-546X},
   review={\MR{2810695}},
   doi={10.1016/j.na.2011.05.008},
}

\bib{10Z}{article}{
   author={Zhu, Xiaobao},
    title={Hamilton's gradient estimates and Liouville theorems for porous medium equations on noncompact Riemannian manifolds},
   journal={J. Math. Anal. Appl.},
   volume={402},
   date={2013},
   number={1},
   pages={201--206},
   issn={0022-247X},
   review={\MR{3023250}},
   doi={10.1016/j.jmaa.2013.01.018},
}

\end{biblist}\end{bibdiv}
\end{document}